\newtheorem{theorem}{\bf Theorem}[section]
\newtheorem{lemma}[theorem]{\bf Lemma}
\newtheorem{proposition}[theorem]{\bf Proposition}
\newtheorem{remark}[theorem]{\bf Remark}
\numberwithin{equation}{section}
\newcommand{\opnorm}{\@ifstar\@opnorms\@opnorm}
\newcommand{\@opnorms}[1]{%
  \left|\mkern-1.5mu\left|\mkern-1.5mu\left|
   #1
  \right|\mkern-1.5mu\right|\mkern-1.5mu\right|
}
\newcommand{\@opnorm}[2][]{%
  \mathopen{#1|\mkern-1.5mu#1|\mkern-1.5mu#1|}
  #2
  \mathclose{#1|\mkern-1.5mu#1|\mkern-1.5mu#1|}
}
\begin{document}
\vspace*{0ex}
\begin{center}
{\Large\bf
Solitary wave solutions to the Isobe--Kakinuma model \\[0.5ex]
for water waves
}
\end{center}

\begin{center}
Mathieu Colin and Tatsuo Iguchi
\end{center}

\begin{abstract}
We consider the Isobe--Kakinuma model for two-dimensional water waves in the case of the flat bottom. 
The Isobe--Kakinuma model is a system of Euler--Lagrange equations 
for a Lagrangian approximating Luke's Lagrangian for water waves.
We show theoretically the existence of a family of small amplitude solitary wave solutions 
to the Isobe--Kakinuma model in the long wave regime. 
Numerical analysis for large amplitude solitary wave solutions is also provided and suggests 
the existence of a solitary wave of extreme form with a sharp crest. 
\end{abstract}

\section{Introduction}
\label{section:intro}
In this paper we consider the motion of two-dimensional water waves in the case of the flat bottom. 
The water wave problem is mathematically formulated as a free boundary problem for an irrotational flow 
of an inviscid and incompressible fluid under the gravitational field. 
Let $t$ be the time and $(x,z)$ the spatial coordinates. 
We assume that the water surface and the bottom are represented as $z = \eta(x,t)$ and $z = - h$, respectively. 
As was shown by J. C. Luke \cite{Luke1967}, the water wave problem has a variational structure. 
His Lagrangian density is of the form 
\begin{equation}\label{intro:Luke's Lagrangian}
\mathscr{L}_{\rm Luke}(\Phi,\eta) = \int_{-h}^{\eta(x,t)}\biggl(\partial_t\Phi(x,z,t)
 +\frac12 \bigl( (\partial_x\Phi(x,z,t))^2 + (\partial_z\Phi(x,z,t))^2 \bigr) + gz\biggr){\rm d}z,
\end{equation}
where $\Phi = \Phi(x,z,t)$ is the velocity potential and $g$ is the gravitational constant. 
M. Isobe \cite{Isobe1994, Isobe1994-2} and T. Kakinuma \cite{Kakinuma2000, Kakinuma2001, Kakinuma2003} 
proposed a model for water waves as a system of Euler--Lagrange equations for an approximate Lagrangian, 
which is derived from Luke's Lagrangian by approximating the velocity potential $\Phi$ 
in the Lagrangian appropriately. 
In this paper, we adopt an approximation under the form
\begin{equation}\label{intro:app}
\Phi(x,z,t) \simeq \sum_{i=0}^N(z+h)^{p_i}\phi_i(x,t),
\end{equation}
where $p_0,p_1,\ldots,p_N$ are nonnegative integers satisfying $0=p_0<p_1<\cdots<p_N$. 
Then, the corresponding Isobe--Kakinuma model in a nondimensional form is written as 
\begin{equation}\label{intro:IK}
\left\{
 \begin{array}{l}
  \displaystyle
  H^{p_i} \partial_t \eta + \sum_{j=0}^N \biggl\{ \partial_x \biggl(
   \frac{1}{p_i+p_j+1} H^{p_i+p_j+1} \partial_x\phi_j \biggr) 
   - \delta^{-2}\frac{p_ip_j}{p_i+p_j-1} H^{p_i+p_j-1} \phi_j \biggr\} = 0 \\
  \makebox[26em]{}\mbox{for}\quad i=0,1,\ldots,N, \\[1ex]
  \displaystyle
  \sum_{j=0}^N H^{p_j} \partial_t \phi_j + \eta 
   + \frac12 \left\{ \left( \sum_{j=0}^N H^{p_j}\partial_x\phi_j \right)^2 
   + \delta^{-2}\left( \sum_{j=0}^N p_j H^{p_j-1} \phi_j \right)^2 \right\} = 0,
 \end{array}
\right.
\end{equation}
where $H(x,t)=1+\eta(x,t)$ is the normalized depth of the water and $\delta$ a nondimensional parameter 
defined by the ratio of the mean depth $h$ to the typical wavelength $\lambda$. 
Here and in what follows we use the notational convention $\frac{0}{0} = 0$. 
For the derivation and basic properties of this model, we refer to 
Y. Murakami and T. Iguchi \cite{MurakamiIguchi2015} and R. Nemoto and T. Iguchi \cite{NemotoIguchi2017}. 
Moreover, it was shown by T. Iguchi \cite{Iguchi2018-1, Iguchi2018-2} that the Isobe--Kakinuma model 
\eqref{intro:IK} is a higher order shallow water approximation for the water wave problem in the strongly 
nonlinear regime. 
We note also that the Isobe--Kakinuma model \eqref{intro:IK} in the case $N=0$ is exactly the same as 
the shallow water equations. 
In the sequel, we always assume $N\geq1$.

In this paper, we look for solitary wave solutions to this model under the form 
\begin{equation}\label{formsol}
\eta=\eta(x+ct),\;\;\phi_j=\phi_j(x+ct),\;\;j=0,1,\ldots,N,
\end{equation}
where $c\in\mathbf{R}$ is an unknown constant phase speed. 
Plugging \eqref{formsol} into \eqref{intro:IK}, 
we obtain a system of ordinary differential equations 
\begin{equation}\label{intro:IK2}
\left\{
 \begin{array}{l}
  \displaystyle
  cH^{p_i} \eta' + \sum_{j=0}^N \biggl\{ \biggl(
   \frac{1}{p_i+p_j+1} H^{p_i+p_j+1} \phi_j' \biggr)' 
   - \delta^{-2}\frac{p_ip_j}{p_i+p_j-1} H^{p_i+p_j-1} \phi_j \biggr\} = 0 \\
  \makebox[24em]{}\mbox{for}\quad i=0,1,\ldots,N, \\[1ex]
  \displaystyle
  c\sum_{j=0}^N H^{p_j} \phi_j' + \eta 
   + \frac12 \left\{ \left( \sum_{j=0}^N H^{p_j} \phi_j' \right)^2 
   + \delta^{-2}\left( \sum_{j=0}^N p_j H^{p_j-1} \phi_j \right)^2 \right\} = 0.
 \end{array}
\right.
\end{equation}
As expected, this system has a variational structure, that is, the solution of this system is 
obtained as a critical point of the functional 
\[
\mathscr{L}_{\rm IK}(\phi_0,\ldots,\phi_N,\eta)
= c\mathscr{M}_{\rm IK}(\phi_0,\ldots,\phi_N,\eta) + \mathscr{E}_{\rm IK}(\phi_0,\ldots,\phi_N,\eta), 
\]
where 

\begin{align*}
\mathscr{M}_{\rm IK}(\phi_0,\ldots,\phi_N,\eta)
&= \int_{\mathbf{R}}\eta(x)\partial_x\bigl( \Phi^{\rm app}(x,\eta(x)) \bigr){\rm d}x, \\
\mathscr{E}_{\rm IK}(\phi_0,\ldots,\phi_N,\eta)
&= \frac12\int_{\mathbf{R}}\left\{
  \int_{-1}^{\eta(x)}\bigl( (\partial_x\Phi^{\rm app}(x,z))^2 
   + (\delta^{-1}\partial_z\Phi^{\rm app}(x,z))^2 \bigr) {\rm d}z + \eta(x)^2 \right\}{\rm d}x,
\end{align*}
and $\Phi^{\rm app}$ is the approximate velocity potential defined by 
\[
\Phi^{\rm app}(x,z) = \sum_{i=0}^N(z+1)^{p_i}\phi_i(x,t).
\]
We note that $\mathscr{M}_{\rm IK}$ and $\mathscr{E}_{\rm IK}$ represent the momentum in the 
horizontal direction and the total energy of the water, respectively. 
Both of them are conserved quantities for the Isobe--Kakinuma model \eqref{intro:IK}. 
In this paper we do not use this variational structure to construct solitary wave solutions 
to the Isobe--Kakinuma model, whereas we use a perturbation method with respect to the 
small nondimensional parameter $\delta$ in the long wave regime.

In order to give one of our main results in this paper concerning the existence of a family 
of solutions to \eqref{intro:IK2}, we introduce norms $\|\cdot\|$ and $\|\cdot\|_k$ for $k=0,1,2,\ldots,$ by 
\begin{equation}\label{intro:norm}
\|u\| = \sup_{x\in\mathbf{R}}\mbox{e}^{|x|}|u(x)|, \qquad
\|u\|_k = \sum_{j=0}^k \|u^{(j)}\|,
\end{equation}
where $u^{(j)}$ is the $j$-th order derivative of $u$. 
We also introduce the function spaces $B_e^k$ and $B_o^k$ as closed subspaces of all even and odd 
functions $u\in C^k(\mathbf{R})$ satisfying $\|u\|_k <+\infty $, respectively, 
equipped with the norm $\|\cdot\|_k$, and put 
$B_\alpha^\infty=\cap_{k=0}^\infty B_\alpha^k$ for $\alpha=e,o$. 
The following theorem guarantees the existence of small amplitude solitary wave solutions to the 
Isobe--Kakinuma model in the long wave regime.

\begin{theorem}\label{intro:theorem}
There exists a positive constant $\delta_0$ such that for any $\delta\in(0,\delta_0]$ 
the Isobe--Kakinuma model \eqref{intro:IK2} has a solution 
$(c^\delta,\eta^\delta,\phi_0^\delta,\ldots,\phi_N^\delta)$, which satisfies 
$\eta^\delta,\phi_0^{\delta \prime} \in B_e^\infty$ and 
$\phi_1^\delta,\ldots,\phi_N^\delta \in B_o^\infty$. 
Moreover, the solution satisfies $c^\delta=1+2\gamma\delta^2$ and 
\[
\begin{cases}\label{est:th1}
 \|\eta^\delta - 4\gamma\delta^2\mbox{\rm sech}^2x\|_k
  + \|(\phi_0^\delta + 4\gamma\delta^2\tanh x)'\|_k \leq C_k\delta^4, \\
 \|\phi_j^\delta - 4\gamma\gamma_j\delta^4\mbox{\rm sech}^2x\tanh x\|_k \leq C_k\delta^6
  \quad\mbox{for}\quad j=1,\ldots,N
\end{cases}
\]
for any $k\in \mathbf{N}$, where the constants $\gamma,\gamma_1,\ldots,\gamma_N$ are determined through 
$p_1,\ldots,p_N$ and the constant $C_k$ does not depend on $\delta$ but on $k$. 
\end{theorem}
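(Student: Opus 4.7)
The plan is to construct the solution as a perturbation around an explicit KdV-type solitary wave. Guided by the statement of the theorem, I would write
\[
c^\delta = 1+2\gamma\delta^2+\delta^4\tilde c,\quad
\eta^\delta = 4\gamma\delta^2\,\mbox{sech}^2 x+\delta^4\tilde\eta,\quad
\phi_0^\delta = -4\gamma\delta^2\tanh x+\delta^4\tilde\phi_0,
\]
\[
\phi_j^\delta = 4\gamma\gamma_j\delta^4\,\mbox{sech}^2 x\tanh x+\delta^6\tilde\phi_j\quad (j\geq 1),
\]
and regard $(\tilde c,\tilde\eta,\tilde\phi_0',\tilde\phi_1,\ldots,\tilde\phi_N)$ as the unknowns, with the constants $\gamma,\gamma_j$ still to be determined. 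The parities stated in the theorem suggest to work in the product space $B_e^k\times B_e^k\times(B_o^k)^N$, exploiting the reflection symmetry $x\mapsto -x$ of the system \eqref{intro:IK2}.

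First I would justify the leading profile. Substituting the ansatz into \eqref{intro:IK2} and expanding in powers of $\delta^2$, the equations indexed by $i\geq 1$ carry the factor $\delta^{-2}$, so at leading order they force algebraic relations expressing the higher modes $\phi_j$ ($j\geq 1$) as explicit combinations of derivatives of $\eta$ and $\phi_0$. Feeding these back into the $i=0$ continuity equation and the Bernoulli-type equation, the $\delta^2$ balance collapses to a scalar second-order ODE of steady KdV type,
\[
\eta_{(0)}''-4\eta_{(0)}+\alpha\,\eta_{(0)}^2=0,
\]
with constants $\alpha,c_2$ depending on $p_1,\ldots,p_N$. Its unique even, exponentially decaying solution is $\eta_{(0)}=4\gamma\,\mbox{sech}^2 x$ provided $c_2=2\gamma$, which simultaneously defines $\gamma$ and, through the algebraic relations, the constants $\gamma_j$.

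Plugging the decomposition back into \eqref{intro:IK2} recasts the remaining problem as
\[
\mathcal L(\tilde c,\tilde\eta,\tilde\phi_0,\tilde\phi_1,\ldots,\tilde\phi_N)=F(\delta)+\delta^2\mathcal N(\tilde c,\tilde\eta,\tilde\phi_0,\tilde\phi_1,\ldots,\tilde\phi_N;\delta),
\]
where $F(\delta)$ is the residual of the leading ansatz (bounded uniformly in $\delta$ in the weighted norms), $\mathcal N$ is a smooth nonlinearity, and $\mathcal L$ is the linearization at the leading profile. The heart of the argument is to show that $\mathcal L$ is an isomorphism on the parity-adapted space. Eliminating the higher modes $\tilde\phi_j$ ($j\geq 1$) through the elliptic block obtained from the $i\geq 1$ equations, and then expressing $\tilde\phi_0'$ from the $i=0$ equation, $\mathcal L$ reduces to the scalar P\"oschl--Teller operator $L=-\partial_x^2+4-12\,\mbox{sech}^2 x$ acting on $\tilde\eta$. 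Its one-dimensional $L^2$ kernel is spanned by $\mbox{sech}^2 x\tanh x$, which is \emph{odd}, so restricting to $B_e^k$ removes the kernel; the free scalar $\tilde c$ is used to absorb the corresponding solvability obstruction. The exponentially weighted estimates needed to map the weighted spaces to themselves follow from the explicit Jost solutions of $L$.

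With $\mathcal L^{-1}$ bounded on the weighted space, the map $R\mapsto\mathcal L^{-1}(F(\delta)+\delta^2\mathcal N(R;\delta))$ is a contraction on a ball of radius $O(1)$ in $B_e^k\times B_e^k\times(B_o^k)^N$ once $\delta$ is sufficiently small, yielding a unique fixed point and hence the estimates of the theorem with the powers $\delta^4$ and $\delta^6$ as stated. Higher regularity ($k$ arbitrary) is obtained by a bootstrap within $B_\alpha^\infty$, since \eqref{intro:IK2} allows one to trade each derivative on a remainder for lower-order, already controlled quantities. I expect the main technical obstacle to be the careful algebraic elimination that reduces the coupled linearization to the single scalar operator $L$ and the verification that the constants $\gamma,\gamma_j$ emerging from the formal expansion are well defined and nonzero for every admissible choice of the exponents $p_1,\ldots,p_N$; once this structural step is in place, the invertibility of $L$ on even, exponentially weighted functions and the contraction step itself are classical.
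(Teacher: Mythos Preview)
Your proposal is essentially the same as the paper's: the ansatz around the KdV soliton, the reduction of the linearization to the scalar P\"oschl--Teller operator $-\partial_x^2+4-12\,\mbox{sech}^2 x$ (the paper writes it as $-\gamma u''+(4\gamma-3\eta_{(0)})u$), the use of parity to remove the odd kernel $\mbox{sech}^2 x\tanh x$, and a contraction in the exponentially weighted spaces all match the paper's Sections~3--6.

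One correction: the paper fixes $c^\delta=1+2\gamma\delta^2$ exactly, with no $\delta^4\tilde c$ term. Since $L$ is self-adjoint, restricting to even functions removes the kernel and the cokernel simultaneously, so there is no solvability obstruction left to absorb; your extra scalar $\tilde c$ is unnecessary and, if retained, would leave the fixed-point problem underdetermined. The paper also keeps the $\delta^2$-dependent coupling between $\psi_0$ and the higher modes $\boldsymbol\psi$ inside the \emph{linear} operator (see \eqref{red:le1}) and handles it by a nested contraction (Proposition~\ref{ET:prop1}) rather than pushing it into the nonlinearity, but this is a matter of bookkeeping rather than a different idea.
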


\begin{remark}\label{intro:remark0}
The constants $\gamma$ and $\boldsymbol{\gamma}=(\gamma_1,\ldots,\gamma_N)^{\rm T}$ in the statement 
of Theorem \ref{intro:theorem} are given by 
$\boldsymbol{\gamma}=A_1^{-1}(\boldsymbol{1}-\boldsymbol{a}_0)$ and 
$\gamma=(\boldsymbol{1}-\boldsymbol{a}_0)\cdot A_1^{-1}(\boldsymbol{1}-\boldsymbol{a}_0)$ with 
a $N\times N$ matrix $A_1$ and a vector $\boldsymbol{a}_0$ given by 
\begin{equation}\label{as:defA}
A_1 = \biggl(\frac{p_ip_j}{p_i+p_j-1}\biggr)_{1\leq i,j\leq N}, \quad
\begin{pmatrix}
 1 & \boldsymbol{a}_0^{\rm T} \\
 \boldsymbol{a}_0 & A_0
\end{pmatrix}
= \biggl(\frac{1}{p_i+p_j+1}\biggr)_{0\leq i,j\leq N},
\end{equation}
and $\boldsymbol{1}=(1,\ldots,1)^{\rm T}$. 
The matrices $A_0$ and $A_1$ are positive, so that the constant $\gamma$ is also positive. 
We will use these notations throughout this paper. 
\end{remark}

\begin{remark}\label{intro:remark1}
This theorem ensures the existence of a family of solitary wave solutions to the Isobe--Kakinuma model 
traveling to the left. 
We can also show a similar existence theorem to solitary wave solutions to the model traveling to the right. 
\end{remark}

\begin{remark}\label{intro:remark2}
By neglecting the term of order $O(\delta^4)$, the solitary wave solutions in the dimensional form 
are given by 
\[
\begin{cases}
 c \simeq \bigl( 1+\frac{a}{2h} \bigr)\sqrt{gh}, \\
 \eta(x,t) \simeq a\,\mbox{\rm sech}^2\Bigl(\sqrt{\frac{a}{4\gamma h^3}}(x+ct)\Bigr), \\
 \phi_0(x,t) \simeq -\sqrt{4\gamma gah^2}\tanh\Bigl(\sqrt{\frac{a}{4\gamma h^3}}(x+ct)\Bigr), \\
 \phi_j(x,t) \simeq 0  \quad\mbox{for}\quad j=1,\ldots,N,
\end{cases}
\]
where $g$ is the gravitational constant and $a$ is the amplitude of the wave. 
\end{remark}

In this paper we also analyze numerically the existence of large amplitude solitary wave solutions to 
the Isobe--Kakinuma model in a special case where we choose the parameters as $N=1$ and $p_1=2$, 
that is, \eqref{na:eq1}. 
We note that even in this simplest case the Isobe--Kakinuma model gives a better approximation 
than the well-known Green--Naghdi equations in the shallow water and strongly nonlinear regime. 
See T. Iguchi \cite{Iguchi2018-1, Iguchi2018-2}. 
Numerical analysis suggests that there exists a critical value of $\delta$ given approximately by 
\begin{equation}\label{intro:cv}
\delta_c = 0.62633493
\end{equation}
such that for any $\delta\in(0,\delta_c)$, the Isobe--Kakinuma model \eqref{na:eq1} admits a smooth 
solitary wave solution and that this family of waves converges to a solitary one of extreme form 
with a shape crest as $\delta\uparrow\delta_c$. 
Moreover, the included angle $\theta$ of the crest in the physical space is given approximately by 
\begin{equation}\label{intro:angle}
\theta = \ang{152.6}.
\end{equation}
See Figure \ref{intro:extreme wave}.

\begin{figure}[ht]
\setlength{\unitlength}{1pt}
\begin{picture}(0,0)
\put(225,-37){$\theta$}
\end{picture}
\begin{center}
\includegraphics[width=0.7\linewidth]{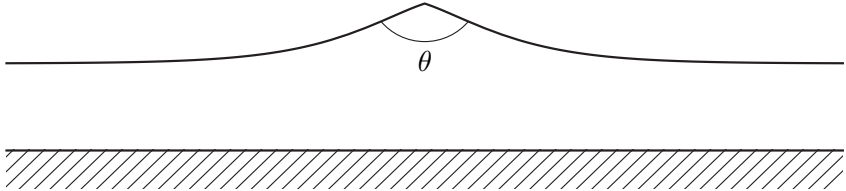}
\end{center}
\caption{Solitary wave of extreme form with the included angle $\theta=\ang{152.6}$.}
\label{intro:extreme wave}
\end{figure}

Here, we mention the related results on the existence of the solitary wave solutions to the water 
wave problem. 
The existence of small amplitude solitary waves for the water wave problem was first given by 
K. O. Friedrichs and D. H. Hyers \cite{FriedrichsHyers1954}. 
Then, C. J. Amick and J. F. Toland \cite {AmickToland1981} proved the existence of solitary wave 
solutions of all amplitudes from zero up to and including that of the solitary wave of greatest height. 
On the other hand, a periodic wave of permanent form is called Stokes' wave. 
The existence of Stokes's wave of extreme form as well as the sharp crest of the included angle $\ang{120}$ 
was predicted by G. G. Stokes \cite{Stokes1847}, and then proved theoretically by 
C. J. Amick, L. E. Fraenkel, and J. F. Toland \cite{AmickFraenkelToland1982}. 
An extension of these existence theories to the water waves with vorticity was given by 
E. Varvaruca \cite{Varvaruca2009}. 
The author has proved the existence of the solitary wave as well as Stokes' wave of greatest height 
with a shape crest of the same included angle $\ang{120}$ as in the irrotational case. 
As for the model equations for water waves, it is well-known that the Korteweg--de Vries equation 
has solitary wave solutions of arbitrarily large amplitude and does not have any wave of extreme form. 
The Green--Naghdi equations are known as higher order shallow water approximate equations for water waves 
in the strongly nonlinear regime and have the same solitary wave solutions as those of 
the Korteweg--de Vries equation, but again does not have any solitary wave of extreme form. 
Compared to these models, the Isobe--Kakinuma model even in the simplest case catches the property 
on the existence of the solitary wave of extreme form, 
although the included angle of the crest is not $\ang{120}$. 
We also mention a result by D. Lannes and F. Marche \cite{LannesMarche2016}, 
where it was shown that an extended Green--Naghdi equations for the water waves with a constant vorticity 
has a solitary wave solution of extreme form.

The contents of this paper are as follows. 
In Section \ref{section:cl}, we give conservation laws for the Isobe--Kakinuma model, 
which will be used in the numerical analysis in Section \ref{section:na}. 
In Section \ref{section:as}, by using formal asymptotic analysis 
we calculate the first order approximate term with respect to $\delta^2$ in the expansion \eqref{as:expansion} of 
the solitary wave solution to the Isobe--Kakinuma model \eqref{intro:IK2}. 
In Section \ref{section:red}, we reduce the problem by deriving equations for the remainder terms. 
In Section \ref{section:gf}, we construct Green's functions for the linearized equations together with 
their estimates. 
In Section \ref{section:ET}, we first prove an existence theorem for the system of linear equations derived 
in Section \ref{section:red} and then prove the existence of small amplitude solitary wave solutions to the 
Isobe--Kakinuma model. 
Finally, in Section \ref{section:na}, we analyze numerically large amplitude solitary wave solutions 
and calculate the solitary wave of extreme form together with the included angle.

\newpage
\noindent
{\bf Acknowledgement} \\
T. I. was partially supported by JSPS KAKENHI Grant Number JP17K18742 and JP17H02856.

\section{Conservation laws}
\label{section:cl}
As was explained in the previous section, for the Isobe--Kakinuma model \eqref{intro:IK}, the mass, 
the momentum in the horizontal direction, and the total energy are conserved. 
In the numerical analysis which will be carried out in Section \ref{section:na}, 
we also need to know corresponding flux functions. 
The following proposition gives such flux functions.

\begin{proposition}\label{cl:prop1}
Any regular solution $(\eta,\phi_0,\ldots,\phi_N)$ to the Isobe--Kakinuma model \eqref{intro:IK} 
satisfies the conservation laws 
\begin{equation}\label{cl:mass}
\partial_t \eta
 + \partial_x \left( \sum_{j=0}^N \frac{1}{p_j+1} H^{p_{j+1}} \partial_x\phi_j\right) = 0, 
\end{equation}
\begin{align}\label{cl:momentum}
& \partial_t \left\{ \eta \partial_x \left( \sum_{i=0}^N H^{p_i} \phi_i \right) \right\} 
 + \partial_x \left\{
  -\eta \partial_t \left( \sum_{i=0}^N H^{p_i} \phi_i \right) - \frac12\eta^2 \right. \\
&\quad\left.
 + \frac12\sum_{i,j=0}^N\left( \frac{1}{p_i+p_j+1}H^{p_i+p_j+1}(\partial_x\phi_i)(\partial_x\phi_j) 
  - \delta^{-2}\frac{p_ip_j}{p_i+p_j-1}H^{p_i+p_j-1}\phi_i\phi_j  \right) \right\} = 0, \nonumber
\end{align}
and 
\begin{align}\label{cl:energy}
& \partial_t \left\{ \frac12\eta^2 
 + \frac12\sum_{i,j=0}^N\left( \frac{1}{p_i+p_j+1}H^{p_i+p_j+1}(\partial_x\phi_i)(\partial_x\phi_j) 
  + \delta^{-2}\frac{p_ip_j}{p_i+p_j-1}H^{p_i+p_j-1}\phi_i\phi_j  \right) \right\} \\
&\quad
 - \partial_x \left\{ \sum_{i,j}^N \frac{1}{p_i+p_j+1}H^{p_i+p_j+1}(\partial_x\phi_i)(\partial_t\phi_j)
  \right\} = 0. \nonumber
\end{align}
\end{proposition}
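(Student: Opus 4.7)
The three conservation laws are proved by direct manipulation of the system \eqref{intro:IK}, each via a different multiplier identity. The mass law \eqref{cl:mass} is immediate: taking $i=0$ in the first subsystem and using the convention $p_0=0$ gives $H^{p_0}=1$ and $\frac{p_0 p_j}{p_0+p_j-1}=0$ for all $j$, so that equation is already in the divergence form claimed.

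For the momentum law I would start from
\[
\partial_t\Bigl\{\eta\,\partial_x\!\Bigl(\sum_{i=0}^N H^{p_i}\phi_i\Bigr)\Bigr\} = (\partial_t\eta)\,\partial_x\!\Bigl(\sum_i H^{p_i}\phi_i\Bigr) + \partial_x\!\Bigl\{\eta\,\partial_t\!\Bigl(\sum_i H^{p_i}\phi_i\Bigr)\Bigr\} - (\partial_x\eta)\,\partial_t\!\Bigl(\sum_i H^{p_i}\phi_i\Bigr),
\]
then expand $\partial_t\sum_i H^{p_i}\phi_i=\sum_i H^{p_i}\partial_t\phi_i+(\sum_i p_i H^{p_i-1}\phi_i)\partial_t\eta$, substitute for $\sum_i H^{p_i}\partial_t\phi_i$ via the Bernoulli-type equation of \eqref{intro:IK} and for $\partial_t\eta$ via the mass law. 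The non-divergence remainder is eliminated by adding $\sum_i(\partial_x\phi_i)\,\mathcal{E}_i=0$, where $\mathcal{E}_i=0$ denotes the $i$-th equation of the first subsystem; this produces exactly the quadratic flux of \eqref{cl:momentum}, including its $\delta^{-2}$ piece with the correct relative sign in front of the $\phi_i\phi_j$ form.

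The energy law \eqref{cl:energy} is treated analogously, but with multiplier $\sum_i(\partial_t\phi_i)\,\mathcal{E}_i=0$. Differentiating the claimed energy density in time produces surface contributions through $\partial_t H^{p_i+p_j\pm 1}$ whose prefactor $p_i+p_j\pm 1$ neatly cancels the matching denominator in the density, and the residual $\partial_t\partial_x\phi_i$ and $\partial_t\phi_i$ contributions pair with the multiplier above after swapping the dummy indices $i\leftrightarrow j$. After this regrouping, all $\delta^{-2}$-singular pieces cancel between the multiplier and the density, and what remains is $\partial_x$ of the flux $\sum_{i,j}\frac{1}{p_i+p_j+1}H^{p_i+p_j+1}(\partial_x\phi_i)(\partial_t\phi_j)$, consistent with the sign convention in \eqref{cl:energy}.

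The main obstacle throughout is algebraic bookkeeping, especially for the $\delta^{-2}$-singular terms: they enter the Bernoulli equation as a fully symmetric quadratic form in $\phi_i\phi_j$, but appear in each $\mathcal{E}_i$ only with the single outer index $i$, and one must verify that the two multiplier procedures above regroup them into precisely the symmetric quadratic forms of \eqref{cl:momentum} and \eqref{cl:energy}. A useful sanity check is that the action associated to \eqref{intro:IK} is invariant under spatial and temporal translations, so by Noether's theorem such conservation laws must exist; the direct computation sketched above simply extracts their explicit flux terms.
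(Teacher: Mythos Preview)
Your proposal is correct and follows essentially the same route as the paper: the paper also reads \eqref{cl:mass} off the $i=0$ equation, derives \eqref{cl:momentum} by forming $(\partial_t\eta)\sum_i H^{p_i}\partial_x\phi_i-(\partial_x\eta)\sum_i H^{p_i}\partial_t\phi_i$ and substituting the Bernoulli equation together with (in effect) the multiplier combination $\sum_i(\partial_x\phi_i)\mathcal{E}_i=0$, and obtains \eqref{cl:energy} by differentiating the energy density and using the Bernoulli equation plus $\sum_j(\partial_t\phi_j)\mathcal{E}_j=0$. The only cosmetic difference is that the paper starts the momentum computation from the cross identity $\partial_t(\eta B_x)-\partial_x(\eta B_t)=\eta_t B_x-\eta_x B_t$, which makes the cancellation of the ``vertical velocity'' pieces $\sum_i p_iH^{p_i-1}\phi_i$ automatic and renders your separate substitution of $\partial_t\eta$ via the mass law unnecessary.
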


\begin{proof}[{\bf Proof}.]
Conservation law of mass \eqref{cl:mass} is nothing but the first equation in \eqref{intro:IK} with $i=0$. 
It follows from the equations in \eqref{intro:IK} that 
\begin{align*}
& \partial_t \left\{ \eta \partial_x \left( \sum_{i=0}^N H^{p_i} \phi_i \right) \right\} 
 - \partial_x \left\{
  \eta \partial_t \left( \sum_{i=0}^N H^{p_i} \phi_i \right) \right\} \\
&= (\partial_t \eta) \sum_{i=0}^N H^{p_i} \partial_x\phi_i
 - (\partial_x \eta) \sum_{i=0}^N H^{p_i} \partial_t\phi_i \\
&= \sum_{i=0}^N \left[ - \sum_{j=0}^N \biggl\{ \partial_x \biggl(
   \frac{1}{p_i+p_j+1} H^{p_i+p_j+1} \partial_x\phi_j \biggr) 
   - \delta^{-2}\frac{p_ip_j}{p_i+p_j-1} H^{p_i+p_j-1} \phi_j \biggr\} \right] \partial_x\phi_i \\
&\quad\;
 + (\partial_x \eta) \left[ \eta 
   + \frac12 \left\{ \left( \sum_{j=0}^N H^{p_j}\partial_x\phi_j \right)^2 
   + \delta^{-2}\left( \sum_{j=0}^N p_j H^{p_j-1} \phi_j \right)^2 \right\} \right] \\
&= -\sum_{i,j=0}^N \left( 
  \frac{1}{p_i+p_j+1} H^{p_i+p_j+1} (\partial_x\phi_i)(\partial_x^2\phi_j)
  - \delta^{-2}\frac{p_ip_j}{p_i+p_j-1} H^{p_i+p_j-1} \phi_j\partial_x\phi_i \right) \\
&\quad\;
 + (\partial_x \eta) \left\{ \eta - \frac12\sum_{i,j=0}^N \bigl(
  H^{p_i+p_j} (\partial_x\phi_i)(\partial_x\phi_j)
  - \delta^{-2}p_ip_jH^{p_i+p_j-2} \phi_i\phi_j \bigr) \right\} \\
&= \partial_x \left\{ \frac12\eta^2
 - \frac12\sum_{i,j=0}^N\left( \frac{1}{p_i+p_j+1}H^{p_i+p_j+1}(\partial_x\phi_i)(\partial_x\phi_j) 
  - \delta^{-2}\frac{p_ip_j}{p_i+p_j-1}H^{p_i+p_j-1}\phi_i\phi_j  \right) \right\},
\end{align*}
which gives conservation law of momentum \eqref{cl:momentum}. 
We see also that 
\begin{align*}
& \partial_t \left\{ \frac12\eta^2 
 + \frac12\sum_{i,j=0}^N\left( \frac{1}{p_i+p_j+1}H^{p_i+p_j+1}(\partial_x\phi_i)(\partial_x\phi_j) 
  + \delta^{-2}\frac{p_ip_j}{p_i+p_j-1}H^{p_i+p_j-1}\phi_i\phi_j  \right) \right\} \\
&= (\partial_t\eta)\left[ \eta 
   + \frac12 \left\{ \left( \sum_{j=0}^N H^{p_j}\partial_x\phi_j \right)^2 
   + \delta^{-2}\left( \sum_{j=0}^N p_j H^{p_j-1} \phi_j \right)^2 \right\} \right] \\
&\quad\;
 + \sum_{i,j=0}^N\left( \frac{1}{p_i+p_j+1}H^{p_i+p_j+1}(\partial_x\phi_i)(\partial_t\partial_x\phi_j) 
  + \delta^{-2}\frac{p_ip_j}{p_i+p_j-1}H^{p_i+p_j-1}\phi_i\partial_t\phi_j  \right) \\
&= -(\partial_t\eta) \sum_{j=0}^N H^{p_j}\partial_t\phi_j
 + \partial_x\left\{ 
  \sum_{i,j=0}^N\frac{1}{p_i+p_j+1}H^{p_i+p_j+1}(\partial_x\phi_i)(\partial_t\phi_j) \right\} \\
&\quad\;
 + \sum_{i,j=0}^N \left\{ \partial_x \biggl(
   \frac{1}{p_i+p_j+1} H^{p_i+p_j+1} \partial_x\phi_i \biggr) 
   - \delta^{-2}\frac{p_ip_j}{p_i+p_j-1} H^{p_i+p_j-1} \phi_i \right\} \partial_t\phi_j \\
&= \partial_x\left\{ 
  \sum_{i,j=0}^N\frac{1}{p_i+p_j+1}H^{p_i+p_j+1}(\partial_x\phi_i)(\partial_t\phi_j) \right\},
\end{align*}
which gives conservation law of energy \eqref{cl:energy}. 
\end{proof}

These conservation laws provide directly the following proposition.

\begin{proposition}\label{cl:prop2}
For any regular solution $(\eta,\phi_0,\ldots,\phi_N)$ to the Isobe--Kakinuma model \eqref{intro:IK2} 
satisfying the condition at spatial infinity 
\begin{equation}\label{cl:BCI}
\eta(x), \phi_0'(x),\ldots,\phi_N'(x),\phi_1(x),\ldots,\phi_N(x) \to 0 \quad\mbox{as}\quad x\to\pm\infty,
\end{equation}
we have 
\begin{equation}\label{cl:cl1}
c\eta + \sum_{j=0}^N \frac{1}{p_j+1} H^{p_j+1} \phi_j' = 0
\end{equation}
and 
\begin{equation}\label{cl:cl2}
\frac12\eta^2 
 - \frac12\sum_{i,j=0}^N\left( \frac{1}{p_i+p_j+1}H^{p_i+p_j+1}\phi_i'\phi_j' 
  - \delta^{-2}\frac{p_ip_j}{p_i+p_j-1}H^{p_i+p_j-1}\phi_i\phi_j  \right) = 0.
\end{equation}
\end{proposition}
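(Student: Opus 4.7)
The strategy is exactly what the wording ``provide directly'' suggests: substitute the traveling-wave ansatz \eqref{formsol} into the three conservation laws of Proposition \ref{cl:prop1}, use $\partial_t = c\,\partial_x$ to turn each balance law into a perfect $x$-derivative, integrate once in $x$, and kill the integration constants with the decay conditions \eqref{cl:BCI} (noting also that $H = 1+\eta \to 1$ at $\pm\infty$).

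For \eqref{cl:cl1} this is immediate: the mass law \eqref{cl:mass} reduces to $\partial_x\bigl(c\eta + \sum_{j=0}^N \tfrac{1}{p_j+1}H^{p_j+1}\phi_j'\bigr) = 0$, and the primitive vanishes at infinity. For \eqref{cl:cl2} I would use the momentum law \eqref{cl:momentum}, which is the natural source since the target identity involves no factor of $c$ and so avoids dividing through. Under the ansatz, the density term $\partial_t\bigl\{\eta\,\partial_x(\sum_i H^{p_i}\phi_i)\bigr\}$ becomes $\partial_x\bigl\{c\eta\,(\sum_i H^{p_i}\phi_i)'\bigr\}$, which cancels exactly against the first piece $-\partial_x\bigl\{\eta\,\partial_t(\sum_i H^{p_i}\phi_i)\bigr\}$ of the flux. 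What remains is the $\partial_x$ of the bracket appearing in \eqref{cl:cl2} with the opposite sign; integrating once and flipping signs delivers the claim. (Alternatively, one can apply the same procedure to the energy law \eqref{cl:energy}, which gives the same identity after dividing by $c$, but that route requires $c\neq 0$ as an extra assumption.)

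The only subtle point is verifying that the boundary values really vanish for the $\delta^{-2}$ pieces, because \eqref{cl:BCI} does not insist on $\phi_0\to 0$. This is harmless: the factor $p_ip_j$ in $\sum p_ip_j H^{p_i+p_j-1}\phi_i\phi_j$ suppresses every term with $i=0$ or $j=0$ (using the convention $\frac{0}{0}=0$), so only pairs with $i,j\geq 1$ survive, and for those the required decay is part of \eqref{cl:BCI}. No genuine obstacle is expected; the whole argument is a short bookkeeping exercise in exploiting the divergence form of Proposition \ref{cl:prop1}.
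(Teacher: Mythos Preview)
Your proposal is correct and is exactly the approach the paper has in mind: the paper's own proof is just the one-line remark that Proposition~\ref{cl:prop1} yields Proposition~\ref{cl:prop2} ``directly,'' and you have simply spelled out the bookkeeping (including the point about $\phi_0$ and the $p_ip_j$ factor) that the paper leaves to the reader.
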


\section{Approximate solutions}
\label{section:as}
We first transform the Isobe--Kakinuma model \eqref{intro:IK2} into an equivalent system. 
The first equation in \eqref{intro:IK2} with $i=0$ can be integrated under the condition at 
spatial infinity \eqref{cl:BCI} so that we have \eqref{cl:cl1}, particularly, 
\[
c\eta = -\sum_{j=0}^N \frac{1}{p_j+1} H^{p_j+1} \phi_j'.
\]
Plugging this into the first equation in \eqref{intro:IK2} with $i=1,\ldots,N$ to eliminate $c\eta$, we have 
\begin{align*}
&H^{p_i}\sum_{j=0}^N \biggl(
   \frac{1}{p_j+1} H^{p_j+1} \phi_j' \biggr)'  \\
&=
\sum_{j=0}^N \biggl\{ \biggl(
   \frac{1}{p_i+p_j+1} H^{p_i+p_j+1} \phi_j' \biggr)' 
   - \delta^{-2}\frac{p_ip_j}{p_i+p_j-1} H^{p_i+p_j-1} \phi_j \biggr\},
\end{align*}
which are equivalent to 
\begin{align*}
\sum_{j=0}^N \biggl( \frac{1}{p_j+1}-\frac{1}{p_i+p_j+1} \biggr)H^{p_j+2} \phi_j''
 + \delta^{-2}\sum_{j=0}^N \frac{p_ip_j}{p_i+p_j-1} H^{p_j} \phi_j = 0
\end{align*}
for $i=1,\ldots,N$. 
Therefore, \eqref{intro:IK2} under the condition \eqref{cl:BCI} is transformed equivalently into 
\begin{equation}\label{as:IK}
\left\{
 \begin{array}{l}
  \displaystyle
  c\eta + \sum_{j=0}^N \frac{1}{p_j+1} H^{p_j+1} \phi_j' = 0, \\
  \displaystyle
  \sum_{j=0}^N \biggl( \frac{1}{p_j+1}-\frac{1}{p_i+p_j+1} \biggr)H^{p_j+2}\phi_j''
   + \delta^{-2}\sum_{j=0}^N \frac{p_ip_j}{p_i+p_j-1} H^{p_j} \phi_j = 0 \\
  \makebox[22em]{}\mbox{for}\quad i = 1,\ldots,N, \\
  \displaystyle
  c\sum_{j=0}^N H^{p_j} \phi_j' + \eta 
   + \frac12 \left\{ \left( \sum_{j=0}^N H^{p_j}\phi_j' \right)^2 
   + \delta^{-2}\left( \sum_{j=0}^N p_j H^{p_j-1} \phi_j \right)^2 \right\} = 0,
 \end{array}
\right.
\end{equation}
where $H = 1+\eta$.

To simplify the notation, we put $\boldsymbol{\phi}=(\phi_1,\ldots,\phi_N)^{\rm T}$. 
Suppose that $(c,\eta,\phi_0,\boldsymbol{\phi})$ is a solution to \eqref{as:IK} and can be expanded 
with respect to $\delta^2$ as 
\begin{equation}\label{as:expansion}
\begin{cases}
c = c_{(0)} + \delta^2 c_{(1)} + \delta^4 c_{(2)} + \cdots, \\
\eta = \delta^2( \eta_{(0)} + \delta^2 \eta_{(1)} + \delta^4 \eta_{(2)} + \cdots), \\
\phi_0 = \delta^2( \phi_{0(0)} + \delta^2 \phi_{0(1)} + \delta^4 \phi_{0(2)} + \cdots), \\
\boldsymbol{\phi} = \delta^2( \boldsymbol{\phi}_{(0)} + \delta^2 \boldsymbol{\phi}_{(1)}
 + \delta^4 \boldsymbol{\phi}_{(2)} + \cdots). 
\end{cases}
\end{equation}

\begin{remark}
In this expansion, we assumed a priori that $\eta$, $\phi_0$, and $\boldsymbol{\phi}$ are of order $O(\delta^2)$. 
This is essentially the assumption that the solution is in the long wave regime. 
\end{remark}

\noindent
Then, equating the coefficients of the lowest power of $\delta^2$ we have 
$A_1\boldsymbol{\phi}_{(0)}=\boldsymbol{0}$ so that $\boldsymbol{\phi}_{(0)}=\boldsymbol{0}$, 
because $A_1$ is nonsingular. 
Equating the coefficients of $\delta^2$ we have 
\[
\begin{cases}
 c_{(0)}\eta_{(0)} + \phi_{0(0)}' = 0, \\
 (\boldsymbol{1}-\boldsymbol{a}_0)\phi_{0(0)}'' + A_1\boldsymbol{\phi}_{(1)} = \boldsymbol{0}, \\
 c_{(0)}\phi_{0(0)}' + \eta_{(0)} = 0.
\end{cases}
\]
It follows from the first and the third equations of the above system that $c_{(0)}^2=1$ in order to obtain a 
nontrivial solution. 
In the following we will consider the case $c_{(0)}=1$. 
Then, we have 
\begin{equation}\label{as:sol1}
\phi_{0(0)}' = -\eta_{(0)}, \quad
\boldsymbol{\phi}_{(1)} = -\boldsymbol{\gamma}\phi_{0(0)}'' =  \boldsymbol{\gamma}\eta_{(0)}',
\end{equation}
where $\boldsymbol{\gamma}$ is the constant vector defined in Remark \ref{intro:remark0}. 
Next, equating the coefficients of $\delta^4$ in the first and the third equations in \eqref{as:IK} we have 
\[
\begin{cases}
 \eta_{(1)} + \phi_{0(1)}' + c_{(1)}\eta_{(0)} + \eta_{(0)}\phi_{0(0)}'
  + \boldsymbol{a}_0\cdot\boldsymbol{\phi}_{(1)}' = 0, \\[0.5ex]
 \phi_{0(1)}' + \eta_{(1)} + c_{(1)}\phi_{0(0)}' + \boldsymbol{1}\cdot\boldsymbol{\phi}_{(1)}'
  + \frac12(\phi_{0(0)}')^2 = 0,
\end{cases}
\]
which together with \eqref{as:sol1} yield 
\begin{equation}\label{as:KdV}
2c_{(1)}\eta_{(0)} -\frac32\eta_{(0)}^2 - \gamma\eta_{(0)}'' = 0,
\end{equation}
where $\gamma$ is the positive constant defined in Remark \ref{intro:remark0}.

As is well known, \eqref{as:KdV} has a family of solutions of the form 
\[
\textstyle
c_{(1)}=\alpha, \qquad \eta_{(0)}(x) = 2\alpha\,\mbox{\rm sech}^2\bigl( \sqrt{\frac{\alpha}{2\gamma}}x \bigr)
\]
with a parameter $\alpha>0$. 
Then, it follows from \eqref{as:sol1} that 
\[
\begin{cases}
 \phi_{0(0)}(x) = -2\sqrt{2\alpha\gamma}\tanh\bigl( \sqrt{\frac{\alpha}{2\gamma}}x \bigr) + \mbox{\rm const.}, \\
 \boldsymbol{\phi}_{(1)}(x)
  = -4\alpha\boldsymbol{\gamma}\sqrt{\frac{\alpha}{2\gamma}}\tanh\bigl( \sqrt{\frac{\alpha}{2\gamma}}x \bigr)
  \,\mbox{\rm sech}^2\bigl( \sqrt{\frac{\alpha}{2\gamma}}x \bigr).
\end{cases}
\]
Therefore, it is natural to expect that \eqref{intro:IK2} has a family of solutions of the form 
\begin{equation}\label{as:exp}
\begin{cases}
c = 1 + \alpha \delta^2 + O(\delta^4), \\
\eta(x) = 2\alpha\delta^2\,\mbox{\rm sech}^2\bigl( \sqrt{\frac{\alpha}{2\gamma}}x \bigr) + O(\delta^4), \\
\phi_{0}(x) = -2\sqrt{2\alpha\gamma}\delta^2\tanh\bigl( \sqrt{\frac{\alpha}{2\gamma}}x \bigr) + O(\delta^4), \\
\boldsymbol{\phi}(x) = -4\alpha\boldsymbol{\gamma}\sqrt{\frac{\alpha}{2\gamma}}\delta^4
 \tanh\bigl( \sqrt{\frac{\alpha}{2\gamma}}x \bigr)
 \,\mbox{\rm sech}^2\bigl( \sqrt{\frac{\alpha}{2\gamma}}x \bigr) + O(\delta^6)
\end{cases}
\end{equation}
with parameters $\alpha>0$ and $0<\delta\ll1$.

\section{Reduction of the problem}
\label{section:red}
Without loss of generality, we can assume that $\alpha=2\gamma$. 
We denote the approximate solitary wave solution obtained in the previous section by $\eta_{(0)}$ and $\phi_{(0)}$, 
that is, 
\begin{equation}\label{ert:as}
\eta_{(0)}(x) = 4\gamma\,\mbox{\rm sech}^2 x, \qquad \phi_{(0)}(x) = -4\gamma\tanh x.
\end{equation}
Then, we will look for solutions to the Isobe--Kakinuma model \eqref{intro:IK2} in the form 
\begin{equation}\label{red:ansatz}
c = 1 + 2\gamma\delta^2, \quad \eta = \delta^2\eta_{(0)} + \delta^4\zeta, \quad 
\phi_0 = \delta^2\phi_{(0)} + \delta^4\psi_0, \quad 
\boldsymbol{\phi} = \delta^4\boldsymbol{\gamma}\eta_{(0)}' + \delta^6\boldsymbol{\psi}.
\end{equation}
Plugging these into the first and the third equations in \eqref{as:IK} we obtain 
\begin{equation}\label{red:IK1}
\begin{cases}
 \zeta + \psi_0' + G_1
  + \delta^2\bigl( (2\gamma-\eta_{(0)})\zeta + \eta_{(0)}\psi_0'
   + \boldsymbol{a}_0\cdot\boldsymbol{\psi}' + G_2 \bigr) + \delta^4 F_1 = 0, \\
 \zeta + \psi_0' + G_3
  + \delta^2\bigl( (2\gamma-\eta_{(0)})\psi_0' + \boldsymbol{1}\cdot\boldsymbol{\psi}' + G_4 \bigr)
   + \delta^4 F_2 = 0, \\
\end{cases}
\end{equation}
where 
\[
\begin{cases}
G_1 = 2\gamma\eta_{(0)} - \eta_{(0)}^2 + \kappa_1\eta_{(0)}'', \\
G_2 = \kappa_2\eta_{(0)}\eta_{(0)}'', \\
G_3 = -2\gamma\eta_{(0)} + \frac12\eta_{(0)}^2 + \kappa_2\eta_{(0)}'', \\
G_4 = \kappa_2(2\gamma - \eta_{(0)})\eta_{(0)}'' + \kappa_3\eta_{(0)}\eta_{(0)}'' + \frac12\kappa_3^2(\eta_{(0)}')^2,
\end{cases}
\]
with $\kappa_1=\boldsymbol{a}_0\cdot\boldsymbol{\gamma}$, $\kappa_2=\boldsymbol{1}\cdot\boldsymbol{\gamma}$, and 
$\kappa_3=(p_1,\ldots,p_N)^{\rm T}\cdot\boldsymbol{\gamma}$ and 
\begin{align*}
& F_1 = \zeta\psi_0' + \eta_{(0)}\sum_{j=1}^N\psi_j'
 + \sum_{j=1}^N\frac{1}{p_j+1}\delta^{-4}
  \bigl( H^{p_j+1} - 1 - \delta^2(p_j+1)\eta_{(0)}\bigr)(\gamma_j\eta_{(0)}'' + \delta^2\psi_j'), \\
& F_2 = F_3 + 2\gamma F_4 + \frac12(\psi_0'+\kappa_2\eta_{(0)}'')^2 \\
&\phantom{ F_2 = }
 + \frac12\delta^{-4}\bigl\{ \bigl( \phi_{(0)}' + \delta^2(\psi_0'+\kappa_2\eta_{(0)}'')
   + \delta^4 F_4 \bigr)^2
  - \bigl( \phi_{(0)}' + \delta^2(\psi_0'+\kappa_2\eta_{(0)}'') \bigr)^2 \bigr\} \\
&\phantom{ F_2 = }
 + \frac12\delta^{-2}\left\{
  \left(\sum_{j=1}^Np_jH^{p_j-1}(\gamma_j\eta_{(0)}'+\delta^2\psi_j)\right)^2
  - \left(\sum_{j=1}^Np_j\gamma_j\eta_{(0)}'\right)^2 \right\},
\end{align*}
with $H=1+\delta^2\eta_{(0)}+\delta^4\zeta$ and 
\begin{align*}
& F_3 = \eta_{(0)}\sum_{j=1}^Np_j\psi_j'
 + \sum_{j=1}^N\delta^{-4}(H^{p_j}-1-\delta^2 p_j\eta_{(0)})(\gamma_j\eta_{(0)}''+\delta^2\psi_j'), \\
& F_4 = \sum_{j=1}^N\psi_j'
 + \sum_{j=1}^N\delta^{-2}(H^{p_j}-1)(\gamma_j\eta_{(0)}''+\delta^2\psi_j').
\end{align*}
In the above calculations, we have used the identities 
\begin{align*}
\sum_{j=0}^NH^{p_j}\phi_j'
&= \delta^2\phi_{(0)}' + \delta^4( \psi_0'+\kappa_2\eta_{(0)}'')
  + \delta^6\left( \kappa_3\eta_{(0)}\eta_{(0)}''+\sum_{j=1}^N\psi_j' \right)
  + \delta^8 F_3 \\
&= \delta^2\phi_{(0)}' + \delta^4( \psi_0'+\kappa_2\eta_{(0)}'')
  + \delta^6 F_4.
\end{align*}
Since $\eta_{(0)}$ satisfies \eqref{as:KdV} with $c_{(1)}=2\gamma$, we have $G_1=G_3$, 
so that \eqref{red:IK1} is equivalent to 
\begin{equation}\label{red:IK2}
\begin{cases}
 (\eta_{(0)}-2\gamma)\zeta + 2(\gamma-\eta_{(0)})\psi_0'
  + (\boldsymbol{1}-\boldsymbol{a}_0)\cdot\boldsymbol{\psi}' = g_0 + \delta^2 f_0, \\
 \zeta + \psi_0' = g_{N+1} + \delta^2 f_{N+1},
\end{cases}
\end{equation}
with $g_0=G_2-G_4$, $g_{N+1}=G_1$, $f_0=F_1-F_2$, and 
$f_{N+1}=(\eta_{(0)}-2\gamma)\psi_0'-\boldsymbol{1}\cdot\boldsymbol{\psi}'-G_4-\delta^2 F_2$. 
On the other hand, plugging \eqref{red:ansatz} into the second equation in \eqref{as:IK}, we obtain 
\begin{equation}\label{red:IK3}
(\boldsymbol{1}-\boldsymbol{a}_0)\psi_0'' - \delta^2(A_0-\boldsymbol{1}\otimes\boldsymbol{a}_0)\boldsymbol{\psi}''
 + A_1\boldsymbol{\psi} = \boldsymbol{g}+\delta^2\boldsymbol{f},
\end{equation}
where 
\begin{align}
\boldsymbol{g} &= \bigl( 2(\boldsymbol{1}-\boldsymbol{a}_0)
 - A_1\,\mbox{\rm diag}(p_1,\ldots,p_N)\boldsymbol{\gamma} \bigr)\eta_{(0)}\eta_{(0)}'
 + (A_0-\boldsymbol{1}\otimes\boldsymbol{a}_0)\boldsymbol{\gamma}\eta_{(0)}''', \label{boldf}\\
\boldsymbol{f} &= (\boldsymbol{1}-\boldsymbol{a}_0)\bigl(
 \delta^{-4}(H^2-1-2\delta^2\eta_{(0)})\eta_{(0)}' + \delta^{-2}(H^2-1)\psi_0'' \bigr) \\
&\quad
 -(A_0-\boldsymbol{1}\otimes\boldsymbol{a}_0)\delta^{-2}\,
 \bigl( \mbox{\rm diag}(H^{p_1+2}-1,\ldots,H^{p_N+2}-1) \bigr)
 (\boldsymbol{\gamma}\eta_{(0)}'''+\delta^2\boldsymbol{\psi}'')\nonumber \\
&\quad
 -A_1\bigl\{ \delta^{-4}
 \bigl( \mbox{\rm diag}(H^{p_1}-1-\delta^2p_1\eta_{(0)},\ldots,H^{p_N}-1-\delta^2p_N\eta_{(0)}) \bigr)
 \boldsymbol{\gamma}\eta_{(0)}' \nonumber\\
&\qquad\qquad
 +\delta^{-2}\bigl( \mbox{\rm diag}(H^{p_1}-1,\ldots,H^{p_N}-1) \bigr)\boldsymbol{\psi} \bigr\}.
 \nonumber
\end{align}
To summarize, the Isobe--Kakinuma model \eqref{as:IK} is reduced to 
\begin{equation}\label{red:IK4}
\begin{cases}
 (\eta_{(0)}-2\gamma)\zeta + 2(\gamma-\eta_{(0)})\psi_0'
  + (\boldsymbol{1}-\boldsymbol{a}_0)\cdot\boldsymbol{\psi}' = g_0 + \delta^2 f_0, \\
(\boldsymbol{1}-\boldsymbol{a}_0)\psi_0'' - \delta^2(A_0-\boldsymbol{1}\otimes\boldsymbol{a}_0)\boldsymbol{\psi}''
 + A_1\boldsymbol{\psi} = \boldsymbol{g}+\delta^2\boldsymbol{f}, \\
 \zeta + \psi_0' = g_{N+1} + \delta^2 f_{N+1}
\end{cases}
\end{equation}
for the remainder terms $(\zeta,\psi_0,\boldsymbol{\psi})$. 
We note that $g_0$ and $g_{N+1}$ are polynomials in $(\eta_{(0)}, \eta_{(0)}',\eta_{(0)}'')$, 
that $\boldsymbol{g}$ is a polynomial in $(\eta_{(0)}, \eta_{(0)}',\eta_{(0)}''')$, and that 
$g_0,g_{N+1} \in B_e^\infty$ and $\boldsymbol{g} \in B_o^\infty$. 
Moreover, $f_0$ and $f_{N+1}$ are polynomials in 
$(\eta_{(0)}, \eta_{(0)}',\eta_{(0)}'', \zeta, \psi_0', \boldsymbol{\psi}, \boldsymbol{\psi}')$, 
and $\boldsymbol{f}$ is a polynomial in 
$(\eta_{(0)}, \eta_{(0)}',\eta_{(0)}''', \zeta, \psi_0'', \boldsymbol{\psi}, \delta^2\boldsymbol{\psi}'')$, 
whose coefficients are polynomials in $\delta$.

In view of \eqref{red:IK4} we proceed to consider the following system of linear ordinary differential equations 
\begin{equation}\label{red:le1}
\begin{cases}
 (\eta_{(0)}-2\gamma)\zeta + 2(\gamma-\eta_{(0)})\psi_0'
  + (\boldsymbol{1}-\boldsymbol{a}_0)\cdot\boldsymbol{\psi}' = F_0, \\
 (\boldsymbol{1}-\boldsymbol{a}_0)\psi_0'' - \delta^2(A_0-\boldsymbol{1}\otimes\boldsymbol{a}_0)\boldsymbol{\psi}''
 + A_1\boldsymbol{\psi} = \boldsymbol{F}, \\
 \zeta + \psi_0' = F_{N+1}
\end{cases}
\end{equation}
for unknowns $(\zeta,\psi_0,\boldsymbol{\psi})$. 
We can rewrite the third equation in \eqref{red:le1} as 
\begin{equation}\label{red:z}
\zeta = -\psi_0' + F_{N+1},
\end{equation}
which enable us to remove the unknown variable $\zeta$ from the equations. 
It follows from the second equations in \eqref{red:le1} that 
\[
\boldsymbol{\psi} = -\boldsymbol{\gamma}\psi_0'' + \delta^2A_2\boldsymbol{\psi}'' + A_1^{-1}\boldsymbol{F},
\]
where $A_2=A_1^{-1}(A_0-\boldsymbol{1}\otimes\boldsymbol{a}_0)$. 
Plugging this and \eqref{red:z} into the first equation in \eqref{red:le1}, we obtain 
\begin{equation}\label{red:psi0}
-\gamma\psi_0'''+(4\gamma-3\eta_{(0)})\psi_0' = F_0 + (2\gamma-\eta_{(0)})F_{N+1}
 - \boldsymbol{\gamma}\cdot\boldsymbol{F}'
  - \delta^2\boldsymbol{\beta}\cdot\boldsymbol{\psi}''',
\end{equation}
where $\boldsymbol{\beta}=A_2^{\rm T}(\boldsymbol{1}-\boldsymbol{a}_0)$. 
This is the equation for $\psi_0$. 
In order to derive equations to solve $\boldsymbol{\psi}$, we put $\chi=\psi_0''$. 
Then, it follows from \eqref{red:le1} that 
\begin{equation}\label{red:psi}
\begin{cases}
 (\boldsymbol{1}-\boldsymbol{a}_0)\cdot\boldsymbol{\psi}'
  = F_0 + (2\gamma - \eta_{(0)})F_{N+1} + (3\eta_{(0)} - 4\gamma)\psi_0', \\
 (\boldsymbol{1}-\boldsymbol{a}_0)\chi
 - \delta^2(A_0-\boldsymbol{1}\otimes\boldsymbol{a}_0)\boldsymbol{\psi}''
 + A_1\boldsymbol{\psi} = \boldsymbol{F},
\end{cases}
\end{equation}
This is the equations for $(\chi,\boldsymbol{\psi})$. 
Of course, $\chi$ is expected to be $\psi_0''$, but we do not know it a priori. 
However, we have the following lemma.

\begin{lemma}\label{red:lem0}
If $\zeta,\psi_0,\boldsymbol{\psi}$, and $\chi$ solve \eqref{red:z}--\eqref{red:psi} and satisfies 
$\chi(0)=\psi_0''(0)$, then we have $\chi=\psi_0''$. 
Particularly, $\zeta,\psi_0$, and $\boldsymbol{\psi}$ solve \eqref{red:le1}. 
\end{lemma}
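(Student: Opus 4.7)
The plan is to introduce $\omega := \chi - \psi_0''$, show that $\omega' \equiv 0$ as a consequence of the equations \eqref{red:z}--\eqref{red:psi}, and then invoke the point condition $\chi(0) = \psi_0''(0)$ to conclude $\omega \equiv 0$. Once $\chi = \psi_0''$ is established, the three equations of \eqref{red:le1} follow immediately: the third is \eqref{red:z}; the second coincides with the second equation of \eqref{red:psi} after substituting $\chi = \psi_0''$; and the first is recovered from the first equation of \eqref{red:psi} by rearrangement together with $\zeta = -\psi_0' + F_{N+1}$.

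To produce the differential identity for $\omega$, I would repeat the derivation that led to \eqref{red:psi0}, but starting from \eqref{red:psi} instead of from the second equation of \eqref{red:le1}. Solving the second equation of \eqref{red:psi} for $\boldsymbol{\psi}$ yields
\[
\boldsymbol{\psi} = -\boldsymbol{\gamma}\chi + \delta^2 A_2 \boldsymbol{\psi}'' + A_1^{-1}\boldsymbol{F}.
\]
Differentiating once and taking the inner product with $\boldsymbol{1}-\boldsymbol{a}_0$, and using the identities $(\boldsymbol{1}-\boldsymbol{a}_0)\cdot\boldsymbol{\gamma} = \gamma$, the symmetry of $A_1$, and $\boldsymbol{\beta} = A_2^{\rm T}(\boldsymbol{1}-\boldsymbol{a}_0)$, I obtain
\[
(\boldsymbol{1}-\boldsymbol{a}_0)\cdot\boldsymbol{\psi}' = -\gamma\chi' + \delta^2\boldsymbol{\beta}\cdot\boldsymbol{\psi}''' + \boldsymbol{\gamma}\cdot\boldsymbol{F}'.
\]
The first equation of \eqref{red:psi} now eliminates the left-hand side and produces
\[
-\gamma\chi' + (4\gamma - 3\eta_{(0)})\psi_0' = F_0 + (2\gamma - \eta_{(0)})F_{N+1} - \boldsymbol{\gamma}\cdot\boldsymbol{F}' - \delta^2\boldsymbol{\beta}\cdot\boldsymbol{\psi}''',
\]
which is precisely \eqref{red:psi0} with $\psi_0'''$ replaced by $\chi'$.

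Subtracting \eqref{red:psi0} and using $\gamma > 0$ yields $\chi' = \psi_0'''$, so $\omega$ is constant; the point condition $\chi(0) = \psi_0''(0)$ then forces $\omega \equiv 0$, and \eqref{red:le1} follows as described above. There is no serious obstacle: the argument is algebraic bookkeeping, with the only subtle point being to mimic the derivation of \eqref{red:psi0} using the second equation of \eqref{red:psi} (which contains $\chi$) rather than the second equation of \eqref{red:le1} (which contains $\psi_0''$), so that the resulting identity matches \eqref{red:psi0} term by term and the comparison is transparent.
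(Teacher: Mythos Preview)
Your proof is correct and follows essentially the same route as the paper: solve the second equation of \eqref{red:psi} for $\boldsymbol{\psi}$, substitute into the first equation of \eqref{red:psi} to obtain the analogue of \eqref{red:psi0} with $\psi_0'''$ replaced by $\chi'$, and compare. You are slightly more explicit than the paper in verifying the identities $(\boldsymbol{1}-\boldsymbol{a}_0)\cdot\boldsymbol{\gamma}=\gamma$ and $(\boldsymbol{1}-\boldsymbol{a}_0)\cdot A_1^{-1}\boldsymbol{F}'=\boldsymbol{\gamma}\cdot\boldsymbol{F}'$, and in spelling out why \eqref{red:le1} follows, but the argument is the same.
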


\begin{proof}[{\bf Proof}.]
It follows from the second equation in \eqref{red:psi} that 
\[
\boldsymbol{\psi} = -\boldsymbol{\gamma}\chi + \delta^2A_2\boldsymbol{\psi}'' + A_1^{-1}\boldsymbol{F}.
\]
Plugging this into the first equation in \eqref{red:psi} we have 
\[
-\gamma\chi'+(4\gamma-3\eta_{(0)})\psi_0' = F_0 + (2\gamma-\eta_{(0)})F_{N+1}
 - \boldsymbol{\gamma}\cdot\boldsymbol{F}'
  - \delta^2\boldsymbol{\beta}\cdot\boldsymbol{\psi}'''.
\]
Comparing this with \eqref{red:psi0} we obtain $(\chi-\psi_0'')'=0$, which gives the desired results. 
\end{proof}

\section{Green's functions}
\label{section:gf}
In view of \eqref{red:psi0}, we first consider the solvability of the equation for an unknown $u$ 
\begin{equation}\label{gf:ode}
-\gamma u''+(4\gamma-3\eta_{(0)})u = f
\end{equation}
under the condition $u(x)\to0$ as $x\to\pm\infty$. 
We remind that $\eta_{(0)}(x) = 4\gamma\,\mbox{\rm sech}^2x$, so that 
this equation does not depend essentially on the positive constant $\gamma$. 
This equation has already been analyzed by K. O. Friedrichs and D. H. Hyers \cite{FriedrichsHyers1954}. 
Let us recall briefly their result. 
In order to construct Green's function, we consider the initial value problems with homogeneous equation: 
\[
\begin{cases}
-\gamma u_1''+(4\gamma-3\eta_{(0)})u_1 = 0, \\
u_1(0) = 0, \quad u_1'(0) = 1,
\end{cases}
\qquad
\begin{cases}
-\gamma u_2''+(4\gamma-3\eta_{(0)})u_2 = 0, \\
u_2(0) = 1, \quad u_2'(0) = 0.
\end{cases}
\]
The solutions of these initial value problems have the form 
\[
\begin{cases}
u_1(x) = \mbox{\rm sech}^2x \tanh x, \\
u_2(x) = \frac18 ( - 6 - \cosh(2x) + 15\mbox{\rm sech}^2x - 15 x \mbox{\rm sech}^2x \tanh x).
\end{cases}
\]
We note that these fundamental solutions satisfy 
\begin{equation}\label{estimate of fundamental solution}
|u_1^{(k)}(x)| \leq C_k \mbox{e}^{-2|x|}, \qquad
|u_2^{(k)}(x)| \leq C_k \mbox{e}^{2|x|}
\end{equation}
for $k=0,1,2,\ldots$. 
Since the Wronskian is $u_1'(x)u_2(x) - u_1(x)u_2'(x) \equiv 1$, 
the solution $u$ of \eqref{gf:ode} can be written as 
\[
u(x) = \gamma^{-1}\biggl(C_1 - \int_0^x u_2(y) f(y) {\rm d}y \biggr) u_1(x)
 + \gamma^{-1}\biggl(C_2 + \int_0^x u_1(y) f(y) {\rm d}y \biggr) u_2(x),
\]
where $C_1$ and $C_2$ are arbitrary constants. 
In order that this solution satisfies the condition $u(x) \to 0$ as $x\to\pm\infty$, 
as necessary conditions, we obtain 
\[
C_2 + \int_0^{\infty} u_1(y) f(y) {\rm d}y = C_2 - \int_{-\infty}^0 u_1(y) f(y) {\rm d}y = 0,
\]
so that we have a necessary condition
\begin{equation}\label{gf:nc}
\int_{-\infty}^{\infty} u_1(y) f(y) {\rm d}y = 0
\end{equation}
for the existence of the solution, and that the solution has the form 
\[
u(x) = \gamma^{-1} u_1(x) \biggl( C_1 - \int_0^x u_2(y) f(y) {\rm d}y \biggr)
 - \gamma^{-1} u_2(x) \int_x^{\infty} u_1(y) f(y) {\rm d}y.
\]
Nonuniqueness of the solution comes from the translation invariance of the original equations.

\begin{proposition}\label{gf:prop1}
Let $\gamma>0$. 
For any $f\in B_e^\infty$, there exists a unique solution $u \in B_e^\infty$ to \eqref{gf:ode}. 
Moreover, for any $k=0,1,2,\ldots$, the solution satisfies 
\[
\|u\|_{k+2} \leq C_0\|f\|_k + C_k\|f\|_0,
\]
where $C_k$ is a positive constant depending on $k$ while $C_0$ does not. 
\end{proposition}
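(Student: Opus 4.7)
The plan is to exploit directly the explicit variation-of-parameters representation already written down in the excerpt. Given $f \in B_e^\infty$, the evenness of $f$ combined with the fact that $u_1(x) = \mbox{\rm sech}^2 x \tanh x$ is odd makes the compatibility condition \eqref{gf:nc} automatic, and a careful choice of the free constant will yield an even solution.

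For existence, I would set $C_1 = 0$ and $C_2 = -\int_0^\infty u_1(y) f(y) {\rm d}y$, so that the candidate takes the form
\[
u(x) = -\gamma^{-1} u_1(x) \int_0^x u_2(y) f(y) {\rm d}y - \gamma^{-1} u_2(x) \int_x^\infty u_1(y) f(y) {\rm d}y.
\]
The first summand is (odd)$\times$(odd)$=$(even). For the second, the identity $\int_{-\infty}^\infty u_1 f {\rm d}y = 0$ together with a change of variable gives $\int_{-x}^\infty u_1 f {\rm d}y = \int_x^\infty u_1 f {\rm d}y$, so the inner integral is even and the summand is (even)$\times$(even)$=$(even). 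Thus $u$ is even, and smoothness is automatic. Uniqueness is then straightforward: two solutions in $B_e^\infty$ differ by $w = a u_1 + b u_2 \in B_e^\infty$, and evenness forces $a = 0$ while exponential decay forces $b = 0$ (since $u_2 \sim e^{2|x|}$).

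For the weighted estimate with $k = 0$, I would plug the bounds $|u_1^{(\ell)}(y)| \leq C_\ell e^{-2|y|}$, $|u_2^{(\ell)}(y)| \leq C_\ell e^{2|y|}$ from \eqref{estimate of fundamental solution} together with $|f(y)| \leq \|f\| e^{-|y|}$ into the formula. For $x \geq 0$, both terms come out as $C\|f\| e^{-x}$ after an elementary integration, and the case $x < 0$ follows from evenness, giving $\|u\| \leq C\|f\|$ with a universal constant $C = C_0$. Differentiating once, the usual Wronskian cancellation kills the boundary contributions and leaves $u' = -\gamma^{-1}\bigl(u_1'(x)\int_0^x u_2 f {\rm d}y + u_2'(x)\int_x^\infty u_1 f {\rm d}y\bigr)$, so $\|u'\| \leq C_0 \|f\|$ by exactly the same estimate.

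For the higher derivatives, I would not differentiate the integrals further; instead, applying $\partial_x^k$ to \eqref{gf:ode} directly yields
\[
u^{(k+2)} = 4 u^{(k)} - 3\gamma^{-1} \sum_{j=0}^k \binom{k}{j} \eta_{(0)}^{(j)} u^{(k-j)} - \gamma^{-1} f^{(k)}.
\]
Since $\eta_{(0)}^{(j)}$ decays like $e^{-2|x|}$, each product satisfies $\|\eta_{(0)}^{(j)} u^{(k-j)}\| \leq C_j \|u^{(k-j)}\|$ in our weighted norm, and an induction on $k$ produces the claimed bound $\|u\|_{k+2} \leq C_0 \|f\|_k + C_k \|f\|_0$. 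The main bookkeeping challenge --- and the genuine content of the asymmetry between $C_0$ and $C_k$ in the bound --- is keeping the constant in front of $\|f^{(k)}\|$ independent of $k$, which is exactly why the high-derivative argument must pass through the equation rather than through further differentiation of the integral formula.
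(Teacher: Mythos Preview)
Your existence, uniqueness, and base-case ($k=0$) arguments are correct and match the paper's. The gap is in the higher-derivative step.

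When you differentiate the equation and write
\[
u^{(k+2)} = 4u^{(k)} - 3\gamma^{-1}\sum_{j=0}^k\binom{k}{j}\eta_{(0)}^{(j)}u^{(k-j)} - \gamma^{-1}f^{(k)},
\]
the term $4u^{(k)}$ (together with the $j=0$ summand $-3\gamma^{-1}\eta_{(0)}u^{(k)}$) feeds back the top-order derivative of $u$ with a fixed coefficient $A=4+3\gamma^{-1}\|\eta_{(0)}\|_{L^\infty}>1$. Iterating, the coefficient of $\|f^{(k-2\ell)}\|$ in your bound for $\|u\|_{k+2}$ is of order $\gamma^{-1}(1+A+\cdots+A^{\ell})$, which grows with $\ell$. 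The stated estimate requires the coefficient of \emph{every} $\|f^{(j)}\|$ with $1\le j\le k$ to be bounded by the single constant $C_0$; your induction only keeps the coefficient of the very top derivative $\|f^{(k)}\|$ uniform, so it does not close. Your final sentence suggests you are aware of a difficulty, but the phrase ``the constant in front of $\|f^{(k)}\|$'' understates what must actually be controlled.

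The paper avoids this by \emph{re-inverting the full operator} $L=-\gamma\partial_x^2+(4\gamma-3\eta_{(0)})$ at each level: it views $u^{(j)}$ as a solution of $L\,u^{(j)} = f^{(j)} + 3[\partial_x^j,\eta_{(0)}]u$ and applies the base estimate $\|v\|_2\le C\|g\|$ to $v=u^{(j)}$. The point is that the commutator $[\partial_x^j,\eta_{(0)}]u$ involves only $u,\ldots,u^{(j-1)}$, \emph{not} $u^{(j)}$; the troublesome top-order feedback has been absorbed into $L^{-1}$. Summing over $j$ gives $\|u\|_{k+2}\le C\|f\|_k + C_k\|u\|_{W^{k,\infty}}$ with $C$ universal, and an interpolation $\|u\|_{W^{k,\infty}}\le\epsilon\|u\|_{k+2}+C_\epsilon\|u\|_2$ (choosing $\epsilon$ small depending on $k$) then yields the estimate with $C_0=2C$. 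So the essential missing ingredient in your argument is exactly the step you chose to skip: you must use the integral-formula inversion (equivalently, the $k=0$ bound as an operator estimate) at every order, not just at the bottom.
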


\begin{proof}[{\bf Proof}.]
Since $f$ is even and $u_1$ is odd, the necessary condition \eqref{gf:nc} for the existence of the 
solution is automatically satisfied. 
Moreover, we restricted ourselves to a class of solutions which are even so that the solution is unique and given by 
\[
u(x) = -\gamma^{-1}u_1(x) \int_0^x u_2(y) f(y) {\rm d}y - \gamma^{-1}u_2(x) \int_x^{\infty} u_1(y) f(y) {\rm d}y.
\]
Using this solution formula, we easily obtain $\|u\|_{k+2} \leq C_k\|f\|_k$. 

We proceed to improve this estimate. 
Differentiating \eqref{gf:ode} $j$-times, we have 
\[
\textstyle
-\gamma u^{(j) \prime\prime}+(4\gamma-3\eta_{(0)})u^{(j)} = f^{(j)}+3[(\frac{\rm d}{{\rm d}x})^j,\eta_{(0)}]u
\]
Applying the previous estimate with $k=0$ to $u^{(j)}$ and adding them for $j=1,2,\ldots,k$, we obtain 
$\|u\|_{k+2} \leq C\|f\|_k + C_k\|u\|_{W^{k,\infty}}$, which together with the interpolation inequality 
$\|u\|_{W^{k,\infty}} \leq \epsilon\|u\|_{W^{k+2,\infty}} + C_\epsilon\|u\|_{L^\infty}
 \leq \epsilon\|u\|_{k+2}+C_\epsilon\|u\|_2$ for any $\epsilon>0$ yields the refined estimate. 
\end{proof}

In view of \eqref{red:psi}, we then consider the solvability of the system of ordinary differential equations 
with constant coefficients 
\begin{equation}\label{gf:varphi}
\begin{cases}
 (\boldsymbol{1}-\boldsymbol{a}_0)\cdot\boldsymbol{\varphi} = f_0, \\
 (\boldsymbol{1}-\boldsymbol{a}_0)\varphi_0
  - \delta^2(A_0-\boldsymbol{1}\otimes\boldsymbol{a}_0)\boldsymbol{\varphi}''
  + A_1\boldsymbol{\varphi} = \boldsymbol{f},
\end{cases}
\end{equation}
for unknowns $\varphi_0$ and $\boldsymbol{\varphi}=(\varphi_1,\ldots,\varphi_N)^{\rm T}$ 
while $f_0$ and $\boldsymbol{f}=(f_1,\ldots,f_N)^{\rm T}$ are given functions. 
We procced to construct Green's function to this system. 
By taking the Fourier transform of \eqref{gf:varphi}, we obtain
\[
\begin{pmatrix}
 0 & (\boldsymbol{1}-\boldsymbol{a}_0)^{\rm T} \\
 \boldsymbol{1}-\boldsymbol{a}_0 & (\delta\xi)^2(A_0 - \boldsymbol{1}\otimes\boldsymbol{a}_0) + A_1
\end{pmatrix}
\begin{pmatrix}
 \hat{\varphi}_0 \\ \hat{\mbox{\boldmath$\varphi$}}
\end{pmatrix}
=
\begin{pmatrix}
 \hat{f}_0 \\ \hat{\mbox{\boldmath$f$}}
\end{pmatrix}.
\]
Now, we need to show that the coefficient matrix is invertible. 
Put 
\begin{equation}\label{gf:defq}
\mathfrak{q}(\xi^2) = -\det
\begin{pmatrix}
 0 & (\boldsymbol{1}-\boldsymbol{a}_0)^{\rm T} \\
 \boldsymbol{1}-\boldsymbol{a}_0 & \xi^2(A_0 - \boldsymbol{1}\otimes\boldsymbol{a}_0) + A_1
\end{pmatrix}.
\end{equation}
Then, we have the following lemma.

\begin{lemma}
$\mathfrak{q}(\xi^2)$ is a polynomial in $\xi^2$ of degree $N-1$. 
Moreover, there exists a positive constant $c_0$ such that for any $\xi\in\mathbf{R}$ 
we have $\mathfrak{q}(\xi^2) \geq c_0>0$. 
\end{lemma}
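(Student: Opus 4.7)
My plan is to derive a closed form for $\mathfrak{q}(\xi^2)$ via a block determinant identity and then control its sign and degree by a saddle-point style variational argument that exploits the positive definiteness of $A_1$ and of $A_0-\boldsymbol{a}_0\boldsymbol{a}_0^{\rm T}$.

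Writing $\boldsymbol{v}=\boldsymbol{1}-\boldsymbol{a}_0$, $C=A_0-\boldsymbol{1}\otimes\boldsymbol{a}_0$, and $B=\xi^2 C+A_1$, I would first apply the Schur formula
\[
\det\begin{pmatrix} 0 & \boldsymbol{v}^{\rm T} \\ \boldsymbol{v} & B \end{pmatrix}
= -\det(B)\,\boldsymbol{v}^{\rm T} B^{-1}\boldsymbol{v},
\]
valid on the (dense) set where $B$ is invertible, and rewrite this as the polynomial identity $\mathfrak{q}(\xi^2)=\boldsymbol{v}^{\rm T}\operatorname{adj}(B)\,\boldsymbol{v}$. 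Since every entry of $\operatorname{adj}(B)$ is a cofactor built from $N-1$ factors that are affine in $\xi^2$, each entry (and hence $\mathfrak{q}$) is a polynomial in $\xi^2$ of degree at most $N-1$.

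The heart of the argument is the non-vanishing $\mathfrak{q}(\xi^2)\neq 0$ for all $\xi\in\mathbf{R}$, which I would prove by showing that the homogeneous system
\[
\boldsymbol{v}^{\rm T}\boldsymbol{\varphi}=0,\qquad \boldsymbol{v}\varphi_0+B\boldsymbol{\varphi}=\boldsymbol{0}
\]
has only the trivial solution. Pairing the second equation with $\boldsymbol{\varphi}$ kills the cross term and yields $\xi^2\,\boldsymbol{\varphi}^{\rm T} C\boldsymbol{\varphi}+\boldsymbol{\varphi}^{\rm T} A_1\boldsymbol{\varphi}=0$. Now $\boldsymbol{x}^{\rm T} C\boldsymbol{x}=\boldsymbol{x}^{\rm T} A_0\boldsymbol{x}-(\boldsymbol{1}^{\rm T}\boldsymbol{x})(\boldsymbol{a}_0^{\rm T}\boldsymbol{x})$ for any $\boldsymbol{x}$, and the constraint $\boldsymbol{v}^{\rm T}\boldsymbol{\varphi}=0$ is precisely $\boldsymbol{1}^{\rm T}\boldsymbol{\varphi}=\boldsymbol{a}_0^{\rm T}\boldsymbol{\varphi}$, so on the constraint subspace $\boldsymbol{\varphi}^{\rm T} C\boldsymbol{\varphi}=\boldsymbol{\varphi}^{\rm T}(A_0-\boldsymbol{a}_0\boldsymbol{a}_0^{\rm T})\boldsymbol{\varphi}$. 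The Schur complement of the positive matrix in Remark~\ref{intro:remark0} at its $(0,0)$ entry is precisely $A_0-\boldsymbol{a}_0\boldsymbol{a}_0^{\rm T}$, which is therefore positive definite; combined with $A_1>0$ this forces $\boldsymbol{\varphi}=\boldsymbol{0}$, and then $\boldsymbol{v}\varphi_0=\boldsymbol{0}$ with $\boldsymbol{v}\neq\boldsymbol{0}$ gives $\varphi_0=0$.

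To conclude, a direct evaluation at $\xi=0$ gives $\mathfrak{q}(0)=\det(A_1)\,\boldsymbol{v}^{\rm T} A_1^{-1}\boldsymbol{v}=\gamma\det(A_1)>0$, and combining this with the non-vanishing above and the intermediate value theorem on the connected set $[0,\infty)$ yields $\mathfrak{q}(\xi^2)>0$ for all $\xi\in\mathbf{R}$. Repeating the kernel argument with $A_1$ replaced by the zero matrix shows that the leading coefficient $\boldsymbol{v}^{\rm T}\operatorname{adj}(C)\boldsymbol{v}$ is nonzero, so the degree of $\mathfrak{q}$ equals exactly $N-1$, and its sign must be positive because $\mathfrak{q}$ itself is positive at large $\xi$. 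Consequently $\mathfrak{q}(\xi^2)\to+\infty$ as $|\xi|\to\infty$ when $N\geq 2$, while $\mathfrak{q}$ is a positive constant when $N=1$; in either case the uniform lower bound $\mathfrak{q}(\xi^2)\geq c_0>0$ follows from continuity and positivity on $[0,\infty)$. The only genuinely non-routine step is the reduction of the non-symmetric form $\boldsymbol{\varphi}^{\rm T} C\boldsymbol{\varphi}$ on the constraint subspace to the manifestly positive $\boldsymbol{\varphi}^{\rm T}(A_0-\boldsymbol{a}_0\boldsymbol{a}_0^{\rm T})\boldsymbol{\varphi}$ via the Schur complement identity; everything else is linear-algebra bookkeeping combined with the intermediate value theorem.
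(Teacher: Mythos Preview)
Your proof is correct and shares the essential ingredients with the paper's argument---the block determinant (Schur) formula and the positive definiteness of $A_0-\boldsymbol{a}_0\boldsymbol{a}_0^{\rm T}$---but the route you take is genuinely different in one respect. The paper first observes that row operations using the first row $(0,\boldsymbol{v}^{\rm T})$ convert the non-symmetric block $\xi^2(A_0-\boldsymbol{1}\otimes\boldsymbol{a}_0)+A_1$ into the symmetric positive definite block $\xi^2(A_0-\boldsymbol{a}_0\otimes\boldsymbol{a}_0)+A_1$ without changing the determinant; after this symmetrization the Schur formula yields $\mathfrak{q}(\xi^2)=(\det A)\,\boldsymbol{v}^{\rm T}A^{-1}\boldsymbol{v}>0$ directly, with no need for a kernel argument or the intermediate value theorem. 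You instead keep the non-symmetric $C$, prove non-vanishing via a null-space argument, and then invoke continuity plus $\mathfrak{q}(0)>0$ to upgrade non-vanishing to positivity. Your crucial observation---that the constraint $\boldsymbol{v}^{\rm T}\boldsymbol{\varphi}=0$ forces $\boldsymbol{\varphi}^{\rm T}C\boldsymbol{\varphi}=\boldsymbol{\varphi}^{\rm T}(A_0-\boldsymbol{a}_0\boldsymbol{a}_0^{\rm T})\boldsymbol{\varphi}$---is morally the same as the paper's row-operation trick, just expressed on the level of vectors rather than determinants. The paper's version is slightly more economical (one step instead of kernel~$+$~IVT), while yours has the minor advantage that it never needs $B$ itself to be invertible along the way. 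Your appeal to the Schur complement of the full Gram matrix to get $A_0-\boldsymbol{a}_0\boldsymbol{a}_0^{\rm T}>0$ is a clean alternative to the paper's direct Cauchy--Schwarz computation on $\int_0^1(\sum\phi_jz^{p_j})^2\,{\rm d}z$.
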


\begin{proof}[{\bf Proof}.]
It is easy to see that 
\begin{align*}
\mathfrak{q}(\xi^2)
&= -\det
\begin{pmatrix}
 0 & (\boldsymbol{1}-\boldsymbol{a}_0)^{\rm T} \\
 \boldsymbol{1}-\boldsymbol{a}_0 & \xi^2(A_0 - \boldsymbol{a}_0\otimes\boldsymbol{a}_0) + A_1
\end{pmatrix}.
\end{align*}
Here, we will show that the symmetric matrix $A_0 - \boldsymbol{a}_0\otimes\boldsymbol{a}_0$ is positive. 
In fact, for any $\boldsymbol{\phi}=(\phi_1,\ldots,\phi_N)^{\rm T}\in\mathbf{R}^N$, we see that 
\begin{align*}
\boldsymbol{\phi}\cdot(A_0 - \boldsymbol{a}_0\otimes\boldsymbol{a}_0)\boldsymbol{\phi}
&= \sum_{i,j=1}^N\frac{1}{p_i+p_j+1}\phi_i\phi_j - \biggl(\sum_{j=1}^N\frac{1}{p_j+1}\phi_j\biggr)^2 \\
&= \int_0^1\biggl(\sum_{j=1}^N\phi_jz^{p_j}\biggr)^2{\rm d}z
 - \biggl(\int_0^1\sum_{j=1}^N\phi_jz^{p_j}{\rm d}z\biggr)^2 \\
&\geq 0
\end{align*}
by the Cauchy--Schwarz inequality. 
Moreover, the equality holds if and only if $\sum_{j=1}^N\phi_jz^{p_j}$ is constant for $z\in[0,1]$, that is, 
$\boldsymbol{\phi}=\boldsymbol{0}$. 
This shows that $A_0 - \boldsymbol{a}_0\otimes\boldsymbol{a}_0$ is positive.

Next, we note that for any invertible matrix $A$ and any vector $\boldsymbol{x}$ we have 
\[
\det
\begin{pmatrix}
 0 & \boldsymbol{x}^{\rm T} \\
 \boldsymbol{x} & A
\end{pmatrix}
= -(\det A)\boldsymbol{x}\cdot A^{-1}\boldsymbol{x}.
\]
This equality comes easily from the identity 
\[
\begin{pmatrix}
 1 & -\boldsymbol{x}^{\rm T}A^{-1} \\
 \boldsymbol{0} & \mbox{Id}
\end{pmatrix}
\begin{pmatrix}
 0 & \boldsymbol{x}^{\rm T} \\
 \boldsymbol{x} & A
\end{pmatrix}
=
\begin{pmatrix}
 -\boldsymbol{x}\cdot A^{-1}\boldsymbol{x} & \boldsymbol{0}^{\rm T} \\
 \boldsymbol{x} & A
\end{pmatrix}.
\]
Since $A_1$ is also positive, for each $\xi\in\mathbf{R}$ 
the matrix $\xi^2(A_0 - \boldsymbol{a}_0\otimes\boldsymbol{a}_0) + A_1$ is positive too. 
Therefore, we have $\mathfrak{q}(\xi^2)>0$, which yields the second assertion of the lemma.

It is easy to see that $\mathfrak{q}(\xi^2)$ is a polynomial in $\xi^2$ of degree less than or equal to 
$N-1$ and that the coefficient of $\xi^{2(N-1)}$ is 
\[
-
\begin{pmatrix}
 0 & (\boldsymbol{1}-\boldsymbol{a}_0)^{\rm T} \\
 \boldsymbol{1}-\boldsymbol{a}_0 & A_0 - \boldsymbol{a}_0\otimes\boldsymbol{a}_0
\end{pmatrix},
\]
which is positive due to the positivity $A_0 - \boldsymbol{a}_0\otimes\boldsymbol{a}_0$. 
Therefore, we obtain the first assertion of the lemma. 
\end{proof}

We denote the inverse matrix by 
\[
\begin{pmatrix}
 0 & (\boldsymbol{1}-\boldsymbol{a}_0)^{\rm T} \\
 \boldsymbol{1}-\boldsymbol{a}_0 & (\delta\xi)^2(A_0 - \boldsymbol{1}\otimes\boldsymbol{a}_0) + A_1
\end{pmatrix}^{-1}
=
\begin{pmatrix}
 q((\delta\xi)^2) & \boldsymbol{q}((\delta\xi)^2)^{\rm T} \\
 \boldsymbol{q}((\delta\xi)^2) & Q((\delta\xi)^2)
\end{pmatrix},
\]
where $\boldsymbol{q}(\xi^2) = (q_1(\xi^2),\ldots,q_N(\xi^2))^{\rm T}$ and 
$Q(\xi^2)=(q_{ij}(\xi^2))_{1\leq i,j\leq N}$. 
Then, the solution $(\varphi_0,\boldsymbol{\varphi})$ to \eqref{gf:varphi} can be expressed as 
\begin{equation}\label{gf:solf1}
\begin{cases}
 \hat{\varphi}_0 = q((\delta\xi)^2)\hat{f}_0 + \boldsymbol{q}((\delta\xi)^2)\cdot\hat{\boldsymbol{f}}, \\
 \hat{\boldsymbol{\varphi}} = \boldsymbol{q}((\delta\xi)^2)\hat{f}_0 + Q((\delta\xi)^2)\hat{\boldsymbol{f}}.
\end{cases}
\end{equation}
We note that $q(\xi^2), q_i(\xi^2)$, and $q_{ij}(\xi^2)$ are rational functions in $\xi^2$, more precisely, 
we can express them as 
\[
q(\xi^2) = q_1\xi^2 + q_0 + \frac{\mathfrak{p}(\xi^2)}{\mathfrak{q}(\xi^2)}, \qquad
q_i(\xi^2) = q_{0i} + \frac{\mathfrak{p}_i(\xi^2)}{\mathfrak{q}(\xi^2)}, \qquad
q_{ij}(\xi^2) = \frac{\mathfrak{p}_{ij}(\xi^2)}{\mathfrak{q}(\xi^2)},
\]
for $i,j=1,\ldots,N$, where $\mathfrak{q}(\xi^2)$ is the polynomial in $\xi^2$ of degree $N-1$ 
defined by \eqref{gf:defq}. 
Moreover, $\mathfrak{p}(\xi^2)$, $\mathfrak{p}_i(\xi^2)$, and $\mathfrak{p}_{ij}(\xi^2)$ are polynomials 
in $\xi^2$ of degree less than $N-1$. 
Therefore, by taking the inverse Fourier transform of \eqref{gf:solf1} we obtain 
\begin{equation}\label{gf:solf2}
\begin{cases}
 \displaystyle
 \varphi_0 = q_1\delta^2f_0'' + q_0f_0 + r_{\delta}*f_0 + \sum_{j=1}^N r_{j\delta}*f_j, \\
 \boldsymbol{\varphi} = \boldsymbol{q}_0f_0
  + \boldsymbol{r}_{\delta}*f_0 + R_{\delta}*\boldsymbol{f},
\end{cases}
\end{equation}
where $\boldsymbol{q}_0 = (q_{01},\ldots,q_{0N})^{\rm T}$, 
$\mbox{\boldmath$r$}(x) = (r_1(x),\ldots,r_N(x))^{\rm T}$, 
$R(x) = \bigl( r_{ij}(x) \bigr)_{1\leq i,j\leq N}$, and 
$r(x)$, $r_j(x)$, $r_{ij}(x)$ are Fourier inverse of 
$\frac{\mathfrak{p}(\xi^2)}{\mathfrak{q}(\xi^2)}$, 
$\frac{\mathfrak{p}_j(\xi^2)}{\mathfrak{q}(\xi^2)}$, 
$\frac{\mathfrak{p}_{ij}(\xi^2)}{\mathfrak{q}(\xi^2)}$, respectively. 
We also used the notation $f_{\delta}(x)=\delta^{-1}f(\delta^{-1}x)$ for any function $f(x)$.

In view of these solution formulae, we consider a function $r(x)$ defined by 
\begin{equation}\label{gf:fr}
r(x) = \frac{1}{2\pi}\int_{\mathbf{R}} \frac{\mathfrak{p}(\xi^2)}{\mathfrak{q}(\xi^2)}
  \mbox{\rm e}^{\mbox{\rm\scriptsize i}x\xi} {\rm d}\xi,
\end{equation}
where $\mathfrak{q}(\xi^2)$ and $\mathfrak{p}(\xi^2)$ are polynomials in $\xi^2$ of degree $N$ and 
of degree less than $N-1$, respectively, and $\mathfrak{q}(\xi^2)\geq c_0>0$. 
It is easy to see that $r(x)$ is a real valued, even, and continuous function on $\mathbf{R}$. 
Since $\mathfrak{q}(\xi^2)$ is a polynomials in $\xi^2$ of degree $N-1$ with real coefficients 
and positive definite, roots of $\mathfrak{q}(\xi^2)=0$ have the form 
\[
\xi = \alpha_k \pm {\rm i}\beta_k, \quad k=1,2,\ldots,N-1
\]
with $0 < \beta_1 \leq \beta_2 \leq \cdots \leq \beta_{N-1}$. 
Therefore, by the residue theorem we have an expression 
\[
r(x) = \sum_{k=1}^{N-1} r_k^\pm {\rm e}^{{\rm i}\alpha_kx - \beta_k|x|} \quad\mbox{for}\quad 
 x \gtrless 0
\]
with some complex constants $r_1^\pm,\ldots,r_{N-1}^\pm$. 
By taking into account the continuity at $x=0$, we have also 
\[
r'(x) = \sum_{k=1}^{N-1} r_k^\pm ( i\alpha_k \mp \beta_k )
 {\rm e}^{{\rm i}\alpha_kx - \beta_k|x|} \quad\mbox{for}\quad  x \gtrless 0.
\]
We note that the second derivative $r''(x)$ contains in general the Dirac delta function. 
Thanks of these expressions, we have the following lemma.

\begin{lemma}\label{gf:lem1}
Let $r(x)$ be defined by \eqref{gf:fr}. 
Then, $r(x)$ is a real value even function and $r \in W^{1,\infty}(\mathbb{R})$. 
Moreover, there exists positive constants $\beta_1$ and $C$ such that for any $x\in\mathbf{R}$ 
we have a pointwise estimate
\[
|r(x)| + |r'(x)| \leq C{\rm e}^{-\beta_1|x|}. 
\]
\end{lemma}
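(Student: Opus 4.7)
The plan is to derive the lemma from the residue calculation sketched in the text just before the statement. First I would check that $r$ is well defined and continuous. As a polynomial in $\xi$, $\mathfrak{q}(\xi^2)$ has degree $2(N-1)$ while $\mathfrak{p}(\xi^2)$ has degree at most $2(N-2)$, and the lower bound $\mathfrak{q}(\xi^2) \geq c_0 > 0$ prevents blow-up on the real line; hence $|\mathfrak{p}(\xi^2)/\mathfrak{q}(\xi^2)| \leq C(1+\xi^2)^{-1}$, which is integrable. So $r \in C(\mathbf{R})$. Since the integrand is real-valued and even in $\xi$, writing $e^{\mathrm{i} x\xi} = \cos(x\xi) + \mathrm{i}\sin(x\xi)$ immediately gives that $r$ is real-valued and even.

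Next I would perform the contour integration. Since $\mathfrak{q}(\xi^2) > 0$ for real $\xi$ and has real coefficients, the $2(N-1)$ zeros of $\mathfrak{q}(\xi^2)$ viewed as a polynomial in $\xi$ are off the real line and distributed symmetrically under $\xi \mapsto -\xi$ and $\xi \mapsto \bar\xi$; exactly $N-1$ of them lie in the upper half-plane, which I label $\alpha_k + \mathrm{i}\beta_k$ with $\beta_k > 0$ (higher multiplicities, should they occur, are handled identically with higher-order residues). For $x > 0$ I close the contour by a large semicircle in the upper half-plane; the arc contribution is $O(R^{-1})$ because the integrand decays like $R^{-2}$ on the arc and $|e^{\mathrm{i}x\xi}| \leq 1$ for $\mathrm{Im}\,\xi \geq 0$. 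The residue theorem then yields the representation $r(x) = \sum_{k=1}^{N-1} r_k^+ e^{\mathrm{i}\alpha_k x - \beta_k x}$ for $x > 0$ stated in the text, and the analogous formula for $x < 0$ follows either from evenness or from closing the contour in the lower half-plane.

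Setting $\beta_1 := \min_k \beta_k > 0$, the explicit formulas produce $|r(x)| \leq C e^{-\beta_1|x|}$ at once. Differentiating each one-sided expression termwise gives a classical derivative on each of $(0,\infty)$ and $(-\infty,0)$ satisfying the same exponential bound. The principal subtlety, and the main obstacle such as it is, is that one cannot differentiate the defining Fourier integral under the integral sign, because the weight $\xi$ reduces the decay of the integrand to only $\xi^{-1}$, so $r$ need not be classically differentiable at $x = 0$. However, the uniform bound on the one-sided derivatives makes $r$ globally Lipschitz on $\mathbf{R}$; its distributional derivative therefore lies in $L^\infty(\mathbf{R})$ and agrees almost everywhere with the pointwise expression, placing $r$ in $W^{1,\infty}(\mathbf{R})$ with the claimed exponential estimate for $|r'|$ as well.
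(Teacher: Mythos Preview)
Your proof is correct and follows essentially the same route as the paper: the paper sketches the residue computation and the resulting one-sided exponential expressions for $r$ and $r'$ in the paragraph immediately preceding the lemma, and then states the lemma as a consequence without a separate proof environment. Your write-up is in fact somewhat more careful than the paper's sketch, since you note how to handle possible higher multiplicities of the roots and you justify the $W^{1,\infty}$ membership via the global Lipschitz bound rather than leaving it implicit.
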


For $\delta>0$, we put $r_{\delta}(x) = \delta^{-1}r(\delta^{-1}x)$ and consider the function $r_{\delta}*f$. 
We remind the function spaces $B_e^k$ and $B_o^k$ for $k=0,1,\ldots$ defined in Section \ref{section:intro} 
and that the norm $\|\cdot\|$ was defined by \eqref{intro:norm}.

\begin{lemma}\label{gf:lem2}
Let $r(x)$ be defined by \eqref{gf:fr} and fix $\delta_0$ such that $0<\delta_0<\beta_1$. 
Then, there exists a constant $C$ such that for any $f\in B_\alpha^0$ $(\alpha=e\;\mbox{or}\;o)$ and 
$\delta \in(0,\delta_0]$ we have $r_{\delta}*f \in B_\alpha^1$. 
Moreover, it holds that
\[
\|r_{\delta}*f\| + \delta\|(r_{\delta}*f)'\| \leq C\|f\|.
\]
\end{lemma}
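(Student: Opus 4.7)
The plan is to reduce the estimates for $\|r_\delta * f\|$ and $\delta\|(r_\delta * f)'\|$ to a direct computation that exploits the pointwise bound of Lemma \ref{gf:lem1}, namely $|r(x)| + |r'(x)| \leq C\,{\rm e}^{-\beta_1|x|}$, together with the triangle inequality $|x|-|y|\leq |x-y|$ and a change of variables adapted to the scaling $r_\delta(z)=\delta^{-1}r(\delta^{-1}z)$.

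First I would verify the parity claim. Since $r$ is even, so is $r_\delta$, and an elementary change of variables in the convolution integral gives $(r_\delta * f)(-x)=\pm(r_\delta * f)(x)$ according as $f$ is even or odd; continuity and the estimates that follow will then put $r_\delta * f$ into $B_\alpha^1$. The differentiability $(r_\delta * f)'=r_\delta' * f$ is justified by dominated convergence using the exponential decay of $r$, $r'$ (Lemma \ref{gf:lem1}) and of $f$ (since $f\in B_\alpha^0$).

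Next I would estimate $\|r_\delta * f\|$. From $|f(y)|\leq\|f\|\,{\rm e}^{-|y|}$ and $|x|-|y|\leq|x-y|$, one finds
\[
{\rm e}^{|x|}\bigl|(r_\delta * f)(x)\bigr|
\leq \|f\|\int_{\mathbf{R}}|r_\delta(z)|\,{\rm e}^{|z|}\,{\rm d}z
\leq C\,\delta^{-1}\|f\|\int_{\mathbf{R}}{\rm e}^{-(\beta_1/\delta-1)|z|}\,{\rm d}z.
\]
Because $\delta\leq\delta_0<\beta_1$, the exponent $\beta_1/\delta-1$ is positive and bounded below by $\beta_1/\delta_0-1$, so the integral evaluates to $2\delta/(\beta_1-\delta)$, yielding $\|r_\delta * f\|\leq C\|f\|$ with a constant depending only on $\delta_0$.

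The estimate on the derivative is analogous: $r_\delta'(z)=\delta^{-2}r'(\delta^{-1}z)$ satisfies $|r_\delta'(z)|\leq C\delta^{-2}{\rm e}^{-\beta_1|z|/\delta}$, so the same argument with one extra factor $\delta^{-1}$ gives
\[
{\rm e}^{|x|}\bigl|(r_\delta * f)'(x)\bigr|
\leq C\delta^{-2}\|f\|\int_{\mathbf{R}}{\rm e}^{-(\beta_1/\delta-1)|z|}\,{\rm d}z
\leq \frac{C\|f\|}{\delta(\beta_1-\delta)},
\]
which rearranges to $\delta\|(r_\delta * f)'\|\leq C\|f\|$. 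The only point that requires care is the restriction $\delta_0<\beta_1$: without it the exponential weight in $\|\cdot\|$ would beat the decay of $r_\delta$ at scale $\delta$ and the integral would diverge. Everything else is a routine computation, so I do not expect any real obstacle beyond making the scaling explicit.
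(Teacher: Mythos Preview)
Your argument is correct and follows essentially the same route as the paper: both proofs invoke the pointwise decay from Lemma~\ref{gf:lem1} and bound the convolution integral directly, then treat the derivative via $(r_\delta*f)'=r_\delta'*f$. The only difference is cosmetic: the paper evaluates $\int_{\mathbf{R}}{\rm e}^{-\frac{\beta_1}{\delta}|x-y|-|y|}\,{\rm d}y$ exactly, obtaining an explicit combination of ${\rm e}^{-|x|}$ and ${\rm e}^{-\frac{\beta_1}{\delta}|x|}$ terms, whereas you short-circuit this with the triangle inequality $|x|\leq|x-y|+|y|$ and a change of variables, which is slightly cruder but perfectly sufficient for the stated bound.
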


\begin{proof}[{\bf Proof}.]
It follows from Lemma \ref{gf:lem1} that 
\begin{align*}
|(r_{\delta}*f)(x)| 
&\leq \frac{1}{\delta}\int_{\mathbf{R}}
 \biggl|r\biggl(\frac{x-y}{\delta}\biggr)\biggr||f(y)|{\rm d}y \\
&\leq \frac{C}{\delta}\|f\|\int_{\mathbf{R}}
 {\rm e}^{-\frac{\beta_1}{\delta}|x-y|-|y|}{\rm d}y \\
&= C\|f\|\biggl\{ \frac{1}{\beta_1+\delta}
  \bigl( {\rm e}^{-|x|} + {\rm e}^{-\frac{\beta_1}{\delta}|x|} \bigr)
 + \frac{1}{\beta_1-\delta}
  \bigl( {\rm e}^{-|x|} - {\rm e}^{-\frac{\beta_1}{\delta}|x|} \bigr) \biggr\} \\
&\leq \frac{4C}{\beta_1^2 - \delta_0^2}\|f\| {\rm e}^{-|x|},
\end{align*}
which implies $\|r_{\delta}*f\| \leq \frac{4C}{\beta_1^2 - \delta_0^2}\|f\|$. 
In view of $(r_{\delta}*f)'=r_{\delta}'*f$, a similar estimate holds for 
$(r_{\delta}*f)'$. 
Moreover, it is easy to see that if $f$ is even or odd, then so is $r_{\delta}*f$, respectively. 
Therefore, we obtain the desired result. 
\end{proof}

In order to give an estimate for the solution $(\varphi_0,\boldsymbol{\varphi})$ to \eqref{gf:varphi}, 
it is convenient to introduce the following weighted norm 
\begin{equation}\label{gf:wnorm}
\|u\|_{k+2,\delta} = \|u\|_k + \delta\|u\|_{k+1} + \delta^2\|u\|_{k+2}
\end{equation}
for $k=0,1,2,\ldots$. 
By the above arguments, we obtain the following proposition.

\begin{proposition}\label{gf:prop2}
There exist positive constants $\delta_0$ and $C$ such that 
for any $f_0, \boldsymbol{f} \in B_\alpha^\infty$ $(\alpha = e$ or $o)$ and any $\delta\in(0,\delta_0]$, 
there exists a unique solution $\varphi_0,\boldsymbol{\varphi} \in B_\alpha^\infty$ to \eqref{gf:varphi}. 
Moreover, for any $k=0,1,2,\ldots$, the solution satisfies 
\[
\begin{cases}
\|\varphi_0\|_k + \|\boldsymbol{\varphi}\|_{k+2,\delta}
 \leq C( \|f_0\|_{k+2,\delta} + \|\boldsymbol{f}\|_k ), \\
\delta^2\|\boldsymbol{\varphi}''\|_k \leq C( \delta^2\|f_0''\|_k + \|\boldsymbol{f}\|_k ).
\end{cases}
\]
\end{proposition}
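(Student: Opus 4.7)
The plan is to read off existence, uniqueness, and all estimates directly from the explicit solution formula \eqref{gf:solf2}, using Lemma \ref{gf:lem2} applied entrywise together with one Fourier-side long-division identity that converts $\delta^2 R_\delta*\boldsymbol f''$ into terms involving $\boldsymbol f$ only.

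First I would observe that every convolution kernel appearing in \eqref{gf:solf2}---the function $r$, each $r_j$, and each entry $r_{ij}$ of $R$---is, by construction, the inverse Fourier transform of a proper rational function of the form $\mathfrak p_\ast(\xi^2)/\mathfrak q(\xi^2)$ with $\deg\mathfrak p_\ast<\deg\mathfrak q=N-1$ and $\mathfrak q\ge c_0>0$, and is even. Hence Lemma \ref{gf:lem1}, and therefore Lemma \ref{gf:lem2}, applies to each of them. This immediately places every term of \eqref{gf:solf2} in $B_\alpha^\infty$ with the correct parity, giving existence and smoothness. Uniqueness follows from $B_\alpha^\infty\subset L^1\cap L^2$: any homogeneous $B_\alpha^\infty$-solution Fourier-transforms to a continuous solution of an algebraic system whose coefficient matrix has determinant $-\mathfrak q((\delta\xi)^2)\le -c_0<0$, and is therefore invertible for every $\xi\in\mathbf R$.

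The key preparatory step is a Fourier-side identity. Applying the Fourier transform gives $\widehat{\delta^2 R_\delta*\boldsymbol f''}(\xi)=-(\delta\xi)^2 Q((\delta\xi)^2)\hat{\boldsymbol f}(\xi)$, and since each $q_{ij}=\mathfrak p_{ij}/\mathfrak q$ satisfies $\deg\mathfrak p_{ij}<\deg\mathfrak q=N-1$, polynomial long division of $\xi^2\mathfrak p_{ij}(\xi^2)$ (degree $\le N-1$) by $\mathfrak q(\xi^2)$ (degree $N-1$) yields
\[
\xi^2 q_{ij}(\xi^2)=b_{ij}+\frac{\tilde{\mathfrak p}_{ij}(\xi^2)}{\mathfrak q(\xi^2)},\qquad\deg\tilde{\mathfrak p}_{ij}<N-1.
\]
The crucial point is that multiplication by $\xi^2$ only raises the numerator degree to exactly that of the denominator, so the polynomial part $b_{ij}$ is a pure constant and no $\xi^2$ term survives. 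Writing $B=(b_{ij})$ and letting $\tilde R$ be the inverse Fourier transform of the remainder matrix (again of Lemma \ref{gf:lem1} type), inverse Fourier transform then yields
\[
\delta^2 R_\delta*\boldsymbol f''=-B\boldsymbol f-\tilde R_\delta*\boldsymbol f.
\]

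With this identity at hand, both estimates are essentially bookkeeping. For each piece of $\varphi_0$ and $\boldsymbol\varphi$ coming from \eqref{gf:solf2} I would alternate between the two complementary bounds $\|r_\delta*g\|\le C\|g\|$ and $\delta\|(r_\delta*g)'\|\le C\|g\|$ of Lemma \ref{gf:lem2}, using $(r_\delta*g)^{(j)}=r_\delta*g^{(j)}$ to distribute derivatives, choosing in each summand of $\|\boldsymbol\varphi\|_{k+2,\delta}=\|\boldsymbol\varphi\|_k+\delta\|\boldsymbol\varphi\|_{k+1}+\delta^2\|\boldsymbol\varphi\|_{k+2}$ whichever option produces the correct factor of $\delta$. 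Whenever a term of the form $\delta^2 (R_\delta*\boldsymbol f^{(k)})''$ arises, I would rewrite it via the identity above as $-B\boldsymbol f^{(k)}-\tilde R_\delta*\boldsymbol f^{(k)}$, converting the outstanding $\boldsymbol f''$ into $\boldsymbol f$ and thereby avoiding any $\|\boldsymbol f\|_{k+1}$ or $\|\boldsymbol f\|_{k+2}$ on the right. Combined with the trivial bounds on the polynomial contributions $q_1\delta^2 f_0''$, $q_0 f_0$, and $\boldsymbol q_0 f_0$, this delivers both the first inequality and the sharp second inequality $\delta^2\|\boldsymbol\varphi''\|_k\le C(\delta^2\|f_0''\|_k+\|\boldsymbol f\|_k)$. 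The main obstacle throughout is the polynomial-long-division identity, which is exactly where the degree hypothesis $\deg\mathfrak p_{ij}<N-1$ becomes essential; once it is in place the remaining estimates are routine applications of Lemmas \ref{gf:lem1} and \ref{gf:lem2}.
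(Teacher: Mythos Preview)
Your proposal is correct and follows exactly the route the paper intends: the paper simply writes ``By the above arguments, we obtain the following proposition'' after establishing the solution formula \eqref{gf:solf2} and Lemmas \ref{gf:lem1}--\ref{gf:lem2}, and your write-up supplies the omitted details. In particular, your Fourier-side long-division identity $\xi^2 q_{ij}(\xi^2)=b_{ij}+\tilde{\mathfrak p}_{ij}(\xi^2)/\mathfrak q(\xi^2)$ is precisely the step needed to convert $\delta^2 R_\delta*\boldsymbol f''$ into terms controlled by $\|\boldsymbol f\|_k$ alone, which is what makes both stated estimates hold with $\|\boldsymbol f\|_k$ (rather than $\|\boldsymbol f\|_{k+2}$) on the right-hand side.
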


\section{Existence of small amplitude solitary waves}
\label{section:ET}
We begin to give an existence theorem of the solution $(\zeta,\psi_0,\boldsymbol{\psi})$ 
to the system of linear ordinary differential equations \eqref{red:le1}.

\begin{proposition}\label{ET:prop1}
There exists a positive constant $\delta_0$ such that for any $F_0,F_{N+1} \in B_e^\infty$, 
any $\boldsymbol{F} \in B_o^\infty$, and any $\delta\in(0,\delta_0]$, 
there exists a unique solution $(\zeta,\psi_0,\boldsymbol{\psi})$ to \eqref{red:le1} satisfying 
$\zeta,\psi_0' \in B_e^\infty$, $\boldsymbol{\psi} \in B_o^\infty$, and $\psi_0(0)=0$. 
Moreover, for any $k=0,1,2,\ldots$, the solution satisfies 
\begin{align*}
\|\psi_0'\|_{k+2} + \|\zeta\|_{k+2,\delta} + \|\boldsymbol{\psi}\|_{k+3,\delta}
&\leq C_0( \|(F_0,F_{N+1})\|_{k+2,\delta} + \|\boldsymbol{F}\|_{k+1} ) \\
&\quad + C_k(  \|(F_0,F_{N+1})\|_{2,\delta} + \|\boldsymbol{F}\|_{1} ),
\end{align*}
where $C_k$ is a positive constant depending on $k$ while $C_0$ does not. 
\end{proposition}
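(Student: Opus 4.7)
The plan is to implement the reduction provided by Lemma~\ref{red:lem0} as a Banach fixed-point argument, with Proposition~\ref{gf:prop1} handling the scalar ODE~\eqref{red:psi0}, Proposition~\ref{gf:prop2} handling the system~\eqref{red:psi}, and the $\delta^2$ factor multiplying $\boldsymbol{\beta}\cdot\boldsymbol{\psi}'''$ in~\eqref{red:psi0} supplying the contraction. Since $F_0,F_{N+1}\in B_e^\infty$, $\boldsymbol{F}\in B_o^\infty$, and all coefficients of~\eqref{red:le1} preserve parity, I would search for $\psi_0',\zeta\in B_e^\infty$, $\psi_0,\boldsymbol{\psi}\in B_o^\infty$, and $\chi:=\psi_0''\in B_o^\infty$ with $\psi_0(0)=0$. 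Then $\chi(0)=\psi_0''(0)=0$ is automatic, so the hypothesis of Lemma~\ref{red:lem0} holds at any such solution, and it is enough to solve~\eqref{red:psi0}--\eqref{red:psi} for $(\psi_0,\chi,\boldsymbol{\psi})$ and then define $\zeta=-\psi_0'+F_{N+1}$.

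Define $T:\boldsymbol{\psi}\mapsto\tilde{\boldsymbol{\psi}}$ on a suitable closed ball of $B_o^\infty$ by two substeps. Setting $u:=\psi_0'$, equation~\eqref{red:psi0} becomes~\eqref{gf:ode} with even source $f = F_0+(2\gamma-\eta_{(0)})F_{N+1}-\boldsymbol{\gamma}\cdot\boldsymbol{F}'-\delta^2\boldsymbol{\beta}\cdot\boldsymbol{\psi}'''$; Proposition~\ref{gf:prop1} produces a unique even $u$, and we set $\psi_0(x)=\int_0^x u(y)\,{\rm d}y$. Integrating the first equation of~\eqref{red:psi} from $0$ to $x$, using $\boldsymbol{\psi}(0)=0$, recasts~\eqref{red:psi} in the form~\eqref{gf:varphi} with odd data $\bigl(\int_0^x h,\boldsymbol{F}\bigr)$, where $h$ is the even right-hand side of the first equation of~\eqref{red:psi}; the compatibility $\int_{\mathbf{R}} h = 0$ needed for $\int_0^x h$ to decay is automatic --- it follows by integrating~\eqref{red:psi0} over $\mathbf{R}$ and invoking the decay of $\psi_0''$, $\boldsymbol{\psi}''$, and $\boldsymbol{F}$. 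Proposition~\ref{gf:prop2} then produces $\chi\in B_o^\infty$ and $\tilde{\boldsymbol{\psi}} =: T(\boldsymbol{\psi}) \in B_o^\infty$.

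For the contraction, take two inputs $\boldsymbol{\psi}_a,\boldsymbol{\psi}_b$; the difference $u_a-u_b$ solves~\eqref{gf:ode} with source $-\delta^2\boldsymbol{\beta}\cdot(\boldsymbol{\psi}_a-\boldsymbol{\psi}_b)'''$, so Proposition~\ref{gf:prop1} gives $\|u_a-u_b\|_{k+2}\le C\delta^2\|\boldsymbol{\psi}_a-\boldsymbol{\psi}_b\|_{k+3}$. Feeding this into the second substep and using the auxiliary bound $\delta^2\|\boldsymbol{\varphi}''\|_k\le C(\delta^2\|f_0''\|_k+\|\boldsymbol{f}\|_k)$ of Proposition~\ref{gf:prop2} at level $k+1$ yields $\|T(\boldsymbol{\psi}_a)-T(\boldsymbol{\psi}_b)\|_{k+3}\le C\delta^2\|\boldsymbol{\psi}_a-\boldsymbol{\psi}_b\|_{k+3}$ (modulo lower-order terms absorbed by the interpolation inequality used in the proof of Proposition~\ref{gf:prop1}); hence $T$ is a contraction in $\|\cdot\|_{k+3}$ for $\delta_0$ sufficiently small, and its unique fixed point is the desired solution. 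The quantitative estimate is then obtained by chaining the two propositions at this fixed point, with the $C_0$ versus $C_k$ splitting inherited from the analogous splitting in Proposition~\ref{gf:prop1}. The main obstacle I anticipate lies precisely in this norm bookkeeping: the coupling term $\delta^2\boldsymbol{\beta}\cdot\boldsymbol{\psi}'''$ carries one derivative beyond what Proposition~\ref{gf:prop2}'s primary bound $\|\boldsymbol{\varphi}\|_{k+2,\delta}$ directly controls, so one must carefully balance the auxiliary estimate on $\delta^2\|\boldsymbol{\varphi}''\|_k$ against the weighted norms~$\|\cdot\|_{k,\delta}$ to ensure that the $\delta^2$ factor genuinely beats the derivative loss.
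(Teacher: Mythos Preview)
Your strategy is correct and closely parallels the paper's, but with two organizational differences worth noting. First, the paper iterates on the scalar $\psi_0$ rather than on $\boldsymbol{\psi}$: given $\tilde{\psi}_0$, it first applies Proposition~\ref{gf:prop2} to obtain $(\chi,\boldsymbol{\psi})$, and only then applies Proposition~\ref{gf:prop1} to produce a new $\psi_0$. The contraction is established directly in the norm $\|\psi_0'\|_2$, which is exactly what Proposition~\ref{gf:prop1} outputs, so the bookkeeping you flag as the main obstacle becomes completely transparent: one gets $\|\psi_0'\|_{k+2}\le C(\cdots)+C\delta^2\|\tilde\psi_0'\|_{k+2}$ in one line. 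Second, to fit~\eqref{red:psi} into the form~\eqref{gf:varphi}, the paper \emph{differentiates} the second equation of~\eqref{red:psi} and solves for $(\varphi_0,\boldsymbol{\varphi})=(\chi',\boldsymbol{\psi}')\in B_e^\infty$, then sets $\chi(x)=\int_0^x\varphi_0$ and $\boldsymbol{\psi}(x)=\int_0^x\boldsymbol{\varphi}$; this sidesteps the compatibility condition $\int_{\mathbf{R}}h=0$ that your integrated formulation requires (though your verification of it via~\eqref{red:psi0} is correct). The decay of $\boldsymbol{\psi}$ itself is recovered only at the fixed point, from the algebraic relation $\boldsymbol{\psi}=A_1^{-1}\bigl(\boldsymbol{F}-(\boldsymbol{1}-\boldsymbol{a}_0)\psi_0''+\delta^2(A_0-\boldsymbol{1}\otimes\boldsymbol{a}_0)\boldsymbol{\psi}''\bigr)$ once Lemma~\ref{red:lem0} gives $\chi=\psi_0''$.

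Your version should close too, but the contraction is most naturally seen in the seminorm $\delta^2\|\boldsymbol{\psi}'''\|_0$ rather than in $\|\boldsymbol{\psi}\|_{k+3}$: chaining Proposition~\ref{gf:prop1} at level $k=0$ with the auxiliary estimate of Proposition~\ref{gf:prop2} at level $k=1$ gives $\delta^2\|(\tilde{\boldsymbol{\psi}}_a-\tilde{\boldsymbol{\psi}}_b)'''\|_0\le C\delta^2\cdot\delta^2\|(\boldsymbol{\psi}_a-\boldsymbol{\psi}_b)'''\|_0$, which contracts. The paper's choice of iterating on $\psi_0$ simply makes this step automatic and keeps the final estimate in the stated form without further manipulation.
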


\begin{proof}[{\bf Proof.}]
Thanks of Lemma \ref{red:lem0}, it is sufficient to solve \eqref{red:psi0}--\eqref{red:psi} for 
$(\psi_0,\boldsymbol{\psi},\chi)$ and then to define $\zeta$ by \eqref{red:z}. 
To solve \eqref{red:psi0}--\eqref{red:psi}, we use the standard iteration argument by applying 
the existence theorems, that is, Propositions \ref{gf:prop1} and \ref{gf:prop2} given in the previous section. 
Suppose that $\tilde{\psi}_0$ is given so that $\tilde{\psi}_0' \in B_e^\infty$. 
By Proposition \ref{gf:prop2} under the condition $0<\delta\leq \delta_0$ 
there exists a unique solution $(\varphi_0,\boldsymbol{\varphi}) \in B_e^\infty$ to 
\[
\begin{cases}
 (\boldsymbol{1}-\boldsymbol{a}_0)\cdot\boldsymbol{\varphi}
  = F_0 + (2\gamma - \eta_{(0)})F_{N+1} + (3\eta_{(0)} - 4\gamma)\tilde{\psi}_0', \\
 (\boldsymbol{1}-\boldsymbol{a}_0)\varphi_0
 - \delta^2(A_0-\boldsymbol{1}\otimes\boldsymbol{a}_0)\boldsymbol{\varphi}''
 + A_1\boldsymbol{\varphi} = \boldsymbol{F}'.
\end{cases}
\]
Then, we define $\chi$ and $\boldsymbol{\psi}$ by 
\[
\chi(x)=\int_0^x\varphi_0(y){\rm d}y, \qquad
\boldsymbol{\psi}(x)=\int_0^x\boldsymbol{\varphi}(y){\rm d}y,
\]
which are odd functions and satisfy 
\begin{equation}\label{exist:appeq1}
\begin{cases}
 (\boldsymbol{1}-\boldsymbol{a}_0)\cdot\boldsymbol{\psi}'
  = F_0 + (2\gamma - \eta_{(0)})F_{N+1} + (3\eta_{(0)} - 4\gamma)\tilde{\psi}', \\
 (\boldsymbol{1}-\boldsymbol{a}_0)\chi
 - \delta^2(A_0-\boldsymbol{1}\otimes\boldsymbol{a}_0)\boldsymbol{\psi}''
 + A_1\boldsymbol{\psi} = \boldsymbol{F}.
\end{cases}
\end{equation}
Then, by Proposition \ref{gf:prop1} there exists a unique solution $\psi_0$ to 
\begin{equation}\label{exist:appeq2}
-\gamma\psi_0'''+(4\gamma-3\eta_{(0)})\psi_0' = F_0 + (2\gamma-\eta_{(0)})F_{N+1}
 + \boldsymbol{\gamma}\cdot\boldsymbol{F}'
  - \delta^2\boldsymbol{\beta}\cdot\boldsymbol{\psi}'''
\end{equation}
satisfying $\psi_0' \in B_e^\infty$ and $\psi_0(0)=0$. 
By using these solutions $(\chi,\boldsymbol{\psi},\psi_0)$, we define maps 
$\mathcal{T}_0:\tilde{\psi}_0\mapsto\psi_0$, $\mathcal{T}_1:\tilde{\psi}_0\mapsto\chi$, 
$\boldsymbol{\mathcal{T}}_2:\tilde{\psi}_0\mapsto\boldsymbol{\psi}$. 
Clearly, $\mathcal{T}_0$ maps 
\[
X=\{ \psi_0 \in C^\infty(\mathbf{R}) \,|\, \psi_0' \in B_e^\infty,\psi_0(0)=0\}
\]
into itself. 
We will show that $\mathcal{T}_0$ is a contraction map with respect to an appropriate norm. 
To this end, let $\tilde{\psi}_0 \in X$ and let $(\chi,\boldsymbol{\psi},\psi_0)$ be the solutions as above. 
By Proposition \ref{gf:prop2} we have 
\begin{align}\label{exist:est1}
\|\chi'\|_k + \|\boldsymbol{\psi}'\|_{k+2,\delta}
\leq C( \|(F_0,F_{N+1})\|_{k+2,\delta} + \|\boldsymbol{F}\|_{k+1} + \|\tilde{\psi}_0'\|_{k+2} ) 
 + C_k( \|F_{N+1}\| + \|\tilde{\psi}_0'\| ), \\
\label{exist:est2}
\delta^2\|\boldsymbol{\psi}'''\|_k
\leq C\|(F_0,F_{N+1})\|_{k+2,\delta} + \|\boldsymbol{F}\|_{k+1} + \delta^2\|\tilde{\psi}_0'\|_{k+2} ) 
 + C_k( \|F_{N+1}\| + \delta^2\|\tilde{\psi}_0'\| ). 
\end{align}
By Proposition \ref{gf:prop1} we have 
\[
\|\psi_0'\|_{k+2}
\leq C( \|(F_0,F_{N+1})\|_k + \|\boldsymbol{F}\|_k + \delta^2\|\boldsymbol{\psi}'''\|_k )
 + C_k( \|(F_0,F_{N+1})\| + \|\boldsymbol{F}\| + \delta^2\|\boldsymbol{\psi}'''\| ),
\]
which together with \eqref{exist:est2} implies 
\[
\|\psi_0'\|_{k+2}
\leq C( \|(F_0,F_{N+1})\|_{k+2,\delta} + \|\boldsymbol{F}\|_{k+1} + \delta^2\|\tilde{\psi}_0'\|_{k+2} )
  + C_k( \|(F_0,F_{N+1})\| + \|\boldsymbol{F}\| + \delta^2\|\tilde{\psi}_0'\| ). 
\]
Particularly, we obtain 
\[
\|\psi_0'\|_2
\leq C( \|(F_0,F_{N+1})\|_{2,\delta} + \|\boldsymbol{F}\|_{1} + \delta^2\|\tilde{\psi}_0'\|_{2} ). 
\]
Therefore, by taking $\delta_0$ so small that $2C\delta_0^2 \leq 1$, we obtain 
\[
\|\psi_0'\|_2
\leq \frac12\|\tilde{\psi}_0'\|_2 + C( \|(F_0,F_{N+1})\|_{2,\delta} + \|\boldsymbol{F}\|_1 )
\]
and 
\begin{align}\label{exist:est3}
\|\psi_0'\|_{k+2}
&\leq \frac12\|\tilde{\psi}_0'\|_{k+2} + C( \|(F_0,F_{N+1})\|_{k+2,\delta} + \|\boldsymbol{F}\|_{k+1} ) \\
&\quad\;
  + C_k\|\tilde{\psi}_0'\|_2
  + C_k( \|(F_0,F_{N+1})\|_{2,\delta} + \|\boldsymbol{F}\|_1 ).
  \nonumber
\end{align}
In exactly the same way as above, we obtain also 
\[
\begin{cases}
 \|\mathcal{T}_0(\tilde{\psi}_0)' - \mathcal{T}_0(\psi_0)'\|_2 \leq \frac12 \|\psi_0'-\tilde{\psi}_0'\|_2, \\
 \|\mathcal{T}_0(\tilde{\psi}_0)' - \mathcal{T}_0(\psi_0)'\|_{k+2} \leq \frac12 \|\psi_0'-\tilde{\psi}_0'\|_{k+2}
  + C_k\|\psi_0'-\tilde{\psi}_0'\|_2, \\
 \|\mathcal{T}_1(\tilde{\psi}_0)' - \mathcal{T}_1(\psi_0)'\|_k
  + \|\boldsymbol{\mathcal{T}}_2(\tilde{\psi}_0)' - \boldsymbol{\mathcal{T}}_2(\psi_0)'\|_{k+2,\delta}
  \leq C_k\|\psi_0'-\tilde{\psi}_0'\|_{k+2}.
\end{cases}
\]
On the other hand, it is easy to see that for any function $u$ satisfying $u(0)=0$ we have 
$\|u\|_{L^\infty} \leq \|u'\|$. 
Therefore, by the contraction mapping principle, the map $\mathcal{T}_0$ has a unique fixed point 
$\psi_0 \in X$. 
Using this fixed point $\psi_0$, we put $\chi=\mathcal{T}_2(\psi_0)$ and 
$\boldsymbol{\psi}=\boldsymbol{\mathcal{T}}_2(\psi_0)$. 
Then, we see easily that $(\psi_0,\chi,\boldsymbol{\psi})$ is a solution to \eqref{red:psi0}--\eqref{red:psi}. 
Moreover, it follows from \eqref{exist:est3} and \eqref{exist:est1} that 
\begin{align*}
\|\psi_0'\|_{k+2} + \|\boldsymbol{\psi}'\|_{k+2,\delta}
\leq C'( \|(F_0,F_{N+1})\|_{k+2,\delta} + \|\boldsymbol{F}\|_{k+1} )
  + C_k'( \|(F_0,F_{N+1})\|_{2,\delta} + \|\boldsymbol{F}\|_1 ),
\end{align*}
where $C'$ is a positive constant independent of $k$ whereas the constant $C_k'$ depends on $k$. 
It follows from Lemma \ref{red:lem0} that $\chi=\psi_0''$ so that 
\[
\boldsymbol{\psi} = A_1^{-1}( \boldsymbol{F} - (\boldsymbol{1}-\boldsymbol{a}_0)\psi_0''
 + \delta^2(A_0 - \boldsymbol{1}\otimes\boldsymbol{a}_0)\boldsymbol{\psi}''),
\]
which implies that 
$\|\boldsymbol{\psi}\| \leq C( \|\boldsymbol{F}\| + \|\psi_0'\|_2 + \|\boldsymbol{\psi}'\|_{2,\delta} )$. 
We note that 
$\|\boldsymbol{\psi}\|_{k+3,\delta} \leq \|\boldsymbol{\psi}'\|_{k+2,\delta} + 3\|\boldsymbol{\psi}\|$. 
Therefore, by defining $\zeta$ by \eqref{red:z} we see that $(\zeta,\psi_0,\boldsymbol{\psi})$ is the 
solution to \eqref{red:le1} satisfying the desired estimate. 
Uniqueness of the solution can be shown as in the above calculation under the restriction 
$0<\delta\leq\delta_0$. 
\end{proof}

In order to prove one of our main result in this paper, that is, Theorem \ref{intro:theorem}, 
it is sufficient to show an existence of the solution $(\zeta,\psi_0,\boldsymbol{\psi})$ to \eqref{red:IK4} 
together with a uniform bound of the solution with respect to the small parameter $\delta$. 
To this end, we need to give estimates of remainder terms $(f_0,\boldsymbol{f},f_{N+1})$ together with 
$(g_0,\boldsymbol{g},g_{N+1})$ in \eqref{red:IK4}. 
It is not difficult to show the following lemma.

\begin{lemma}\label{exist:lem1}
Suppose that $F(\boldsymbol{u})$ is a polynomial of $\boldsymbol{u}$ such that $F(\boldsymbol{0})=0$. 
Then, for any $k=2,3,4,\ldots$ and any $\delta\in(0,1]$ we have 
\[
\begin{cases}
 \|F(\boldsymbol{u})\|_{k+2,\delta} \leq C(\|\boldsymbol{u}\|)\|\boldsymbol{u}\|_{k+2,\delta}
  + C(k,\|\boldsymbol{u}\|_{k+1,\delta}), \\
 \|F(\boldsymbol{u})\|_{k+1} \leq C(\|\boldsymbol{u}\|)\|\boldsymbol{u}\|_{k+1}
  + C(k,\|\boldsymbol{u}\|)\|\boldsymbol{u}\|_1\|\boldsymbol{u}\|_k
  + C(k,\|\boldsymbol{u}\|_{k-1}).
\end{cases}
\]
\end{lemma}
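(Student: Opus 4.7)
The plan is to reduce to monomials by linearity, apply the generalized Leibniz rule, and split the resulting sum according to where the highest--order derivative lands. Since $F(\boldsymbol{0}) = 0$, the polynomial $F$ is a finite linear combination of monomials of total degree at least one in the components of $\boldsymbol{u}$, so it suffices to prove both estimates for a single monomial of the form $u_{i_1} u_{i_2} \cdots u_{i_m}$ with $m \geq 1$; for notational simplicity I will write $F(u) = u^m$, the general (vector) case being identical after relabeling factors. The fundamental product estimate we will use is
\[
 \|v_1 v_2 \cdots v_m\| \leq \|v_1\|_{L^\infty} \cdots \|v_{m-1}\|_{L^\infty} \cdot \|v_m\|,
\]
which holds because only a single factor needs to absorb the exponential weight $\mbox{e}^{|x|}$ in the definition of $\|\cdot\|$; combined with the trivial bound $\|v\|_{L^\infty} \leq \|v\|$, this lets us concentrate the weighted norm on any chosen factor while bounding the others in $L^\infty$ via $\|\cdot\|_k$ for the appropriate $k$.

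For the first estimate, I would apply the multinomial Leibniz rule
\[
 (u^m)^{(j)} = \sum_{l_1+\cdots+l_m=j} \binom{j}{l_1,\ldots,l_m} u^{(l_1)} \cdots u^{(l_m)}
\]
for $j = k, k+1, k+2$ and split the terms according to whether some $l_i$ attains the maximal value $j$. Terms in which some $l_i = j$ yield, after the product estimate, a contribution of the form $\|u\|^{m-1} \|u^{(j)}\|$; weighting by the appropriate power of $\delta$ from $\|u\|_{k+2,\delta} = \|u\|_k + \delta\|u\|_{k+1} + \delta^2\|u\|_{k+2}$ bounds them by $C(\|u\|)\|u\|_{k+2,\delta}$. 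All remaining terms have every individual derivative order at most $j-1$, and after distributing the prefactor $\delta^{j-k}$ among the factors (e.g.\ writing $\delta^2 = \delta \cdot \delta$ across two factors of orders $\leq k+1$), each such term becomes a polynomial in $\|u\|_{k+1,\delta}$ whose coefficients depend on $k$ and $m$, and is thus absorbed into $C(k,\|u\|_{k+1,\delta})$.

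For the second estimate with $j = k+1$, the same Leibniz expansion admits a finer three--way split: (a) terms where some $l_i = k+1$, which contribute $C(\|u\|)\|u\|_{k+1}$; (b) terms where some $l_i = k$, forcing exactly one other index to equal $1$, which contribute $C(k,\|u\|)\|u\|_1 \|u\|_k$; and (c) terms with all $l_i \leq k-1$, which are polynomial in $\|u\|_{k-1}$ and contribute $C(k,\|u\|_{k-1})$. The main obstacle is purely bookkeeping: in the first estimate one must carefully distribute the $\delta$--weights so that a mixed high--order term such as $(u')(u^{(k+1)})$ carrying the weight $\delta^2$ is recognized as $(\delta\|u\|_1)(\delta\|u\|_{k+1}) \leq \|u\|_{k+1,\delta}^2$ and correctly placed in the remainder, and one must ensure that the only contribution of the form (leading--order linear in $\|u\|_{k+2,\delta}$ with $k$--independent constant) comes from the single--factor extreme--derivative terms identified above. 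No analytic input beyond the product estimate and Leibniz's rule is required.
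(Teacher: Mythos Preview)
The paper does not actually supply a proof of this lemma: it merely records that ``it is not difficult to show'' and then states the result. Your Leibniz--rule argument with a split according to the maximal derivative index is exactly the natural proof the authors have in mind, and it is correct in outline.

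One small slip worth fixing: in your illustrative remark you write the cross term $\delta^2\|u'\|\,\|u^{(k+1)}\|$ as $(\delta\|u\|_1)(\delta\|u\|_{k+1})$ and claim this is $\leq \|u\|_{k+1,\delta}^2$. But $\|u\|_{k+1,\delta}=\|u\|_{k-1}+\delta\|u\|_k+\delta^2\|u\|_{k+1}$ only controls $\delta^2\|u\|_{k+1}$, not $\delta\|u\|_{k+1}$, so this distribution fails. The correct bookkeeping is to keep both powers of $\delta$ on the high factor: $\delta^2\|u^{(k+1)}\|\leq\|u\|_{k+1,\delta}$ and $\|u'\|\leq\|u\|_1\leq\|u\|_{k-1}\leq\|u\|_{k+1,\delta}$ (using $k\geq 2$), which gives the desired bound. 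The same rule---put all available $\delta$'s on the highest--order factor first, then on the next---handles every remainder term in the first estimate. With that adjustment your argument goes through as written.
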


We note again that $g_0$ and $g_{N+1}$ are polynomials in $(\eta_{(0)}, \eta_{(0)}',\eta_{(0)}'')$, 
that $\boldsymbol{g}$ is a polynomial in $(\eta_{(0)}, \eta_{(0)}',\eta_{(0)}''')$, and that 
$g_0,g_{N+1} \in B_e^\infty$ and $\boldsymbol{g} \in B_o^\infty$. 
Moreover, $f_0$ and $f_{N+1}$ are polynomials in 
$(\eta_{(0)}, \eta_{(0)}',\eta_{(0)}'', \zeta, \psi_0', \boldsymbol{\psi}, \boldsymbol{\psi}')$, 
and $\boldsymbol{f}$ is a polynomial in 
$(\eta_{(0)}, \eta_{(0)}',\eta_{(0)}''', \zeta, \psi_0'', \boldsymbol{\psi}, \delta^2\boldsymbol{\psi}'')$, 
whose coefficients are polynomials in $\delta$. 
Therefore, applying Lemma \ref{exist:lem1} to $(f_0,\boldsymbol{f},f_{N+1})$ we obtain the following lemma.

\begin{lemma}\label{exist:lem2}
Suppose that $\psi',\zeta \in B_e^\infty$, $\boldsymbol{\psi} \in B_o^\infty$, and $\delta\in(0,1]$ satisfy 
\[
\|\psi_0'\|_{k+2} + \|\zeta\|_{k+2,\delta} + \|\boldsymbol{\psi}\|_{k+3,\delta} \leq M_k
\]
for $k=2,3,4,\ldots$. 
Then, it holds that $f_0,f_{N+1} \in B_e^\infty$, $\boldsymbol{f} \in B_o^\infty$, and that 
for $k=3,4,5,\ldots$ we have 
\[
\begin{cases}
 \|(f_0,f_{N+1})\|_{4,\delta} + \delta\|\boldsymbol{f}\|_3 \leq C(M_2), \\
 \|(f_0,f_{N+1})\|_{k+2,\delta} + \delta\|\boldsymbol{f}\|_{k+1}
  \leq C(M_2)M_k + C(k,M_{k-1}).
\end{cases}
\]
\end{lemma}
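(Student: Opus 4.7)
The plan is to apply Lemma \ref{exist:lem1} to the explicit formulas for $F_1, F_2, F_3, F_4$ (and hence for $f_0, f_{N+1}, \boldsymbol{f}$) displayed in Section \ref{section:red}, after first verifying that these quantities are genuinely polynomials in the relevant variables with $\delta$-polynomial coefficients, despite the cosmetic negative powers of $\delta$ appearing in their definitions. Concretely, I would Taylor-expand each expression $\delta^{-4}(H^2-1-2\delta^2\eta_{(0)})$, $\delta^{-4}(H^{p_j}-1-\delta^2 p_j\eta_{(0)})$, $\delta^{-2}(H^{p_j+1}-1)$, $\delta^{-2}(H^{p_j}-1)$, and $\delta^{-2}(H^{p_j+2}-1)$ in the variable $H-1=\delta^2\eta_{(0)}+\delta^4\zeta$. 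The subtracted initial Taylor terms are designed precisely to cancel the negative powers of $\delta$, so each such expression becomes a polynomial in $(\eta_{(0)},\zeta)$ with coefficients in $\mathbf{R}[\delta^2]$. Substituting back, $f_0, f_{N+1}$ become polynomials without constant term in $(\eta_{(0)},\eta_{(0)}',\eta_{(0)}'',\zeta,\psi_0',\boldsymbol{\psi},\boldsymbol{\psi}')$, and $\boldsymbol{f}$ a polynomial without constant term in $(\eta_{(0)},\eta_{(0)}',\eta_{(0)}''',\zeta,\psi_0'',\boldsymbol{\psi},\delta^2\boldsymbol{\psi}'')$, with $\delta$-polynomial coefficients. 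The parity claims then follow from a monomial-by-monomial count: $\eta_{(0)},\eta_{(0)}'',\zeta,\psi_0',\boldsymbol{\psi}'$ are even, while $\eta_{(0)}',\eta_{(0)}''',\psi_0'',\boldsymbol{\psi}$ (and hence $\delta^2\boldsymbol{\psi}''$) are odd, and inspection of each summand in $F_1,\dots,F_4$ shows the requisite total parity.

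For the quantitative bounds I would apply Lemma \ref{exist:lem1} with $\boldsymbol{u}$ taken to be the variable tuple in each case. For $\|(f_0,f_{N+1})\|_{k+2,\delta}$, the hypothesis gives $\|\boldsymbol{u}\|_{k+2,\delta}\leq C M_k$ directly (the $\eta_{(0)}^{(j)}$ are uniformly bounded, while $\zeta,\psi_0'$ are controlled by $M_k$ in $\|\cdot\|_{k+2,\delta}$ and $\|\cdot\|_{k+2}$ respectively, and $\boldsymbol{\psi},\boldsymbol{\psi}'$ are subsumed since $\|\boldsymbol{\psi}\|_{k+3,\delta}$ dominates $\|\boldsymbol{\psi}'\|_{k+2,\delta}$). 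The first inequality of Lemma \ref{exist:lem1} then produces the estimate for $\|(f_0,f_{N+1})\|_{k+2,\delta}$. For the bound on $\delta\|\boldsymbol{f}\|_{k+1}$ the delicate entry is $\delta^2\boldsymbol{\psi}''$: its $(k+1)$-st derivative is $\delta^2\boldsymbol{\psi}^{(k+3)}$, which is exactly the top-derivative term in $\|\boldsymbol{\psi}\|_{k+3,\delta}$ and is therefore bounded by $M_k$; the intermediate derivatives $\boldsymbol{\psi}^{(k+2)}$ and $\zeta^{(k+1)}$ cost factors $\delta^{-1}$, which are absorbed by the overall prefactor $\delta$ since $\delta\|\boldsymbol{\psi}\|_{k+2}, \delta\|\zeta\|_{k+1}\leq M_k$. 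The product-rule distribution in the second inequality of Lemma \ref{exist:lem1} then yields exactly $C(M_2)M_k+C(k,M_{k-1})$ after multiplication by $\delta$.

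The lower-order estimate with right-hand side $C(M_2)$ is obtained by specializing to $k=2$ and collapsing the term $C(k,M_{k-1})=C(2,M_1)$ into $C(M_2)$, using that the assumed bound at level $k=2$ dominates the level $k=1$ quantities. The main obstacle I foresee is the careful $\delta$-bookkeeping in the product-rule expansion: each monomial of the polynomial form $(\text{coef}\cdot\delta^a)\cdot \zeta^{(j_1)}\cdot\psi_0^{(j_2)}\cdot\boldsymbol{\psi}^{(j_3)}\cdots$ must be checked to fit the target $\delta$-weighted bound, which is made systematic but not automatic by Lemma \ref{exist:lem1}. In particular, one must exploit that in every such monomial arising in $\boldsymbol{f}$ the total $\delta$-degree of the coefficient compensates exactly for the maximum $\delta$-weighted order appearing on the variable side, which is the structural reason behind the prefactor $\delta$ multiplying $\|\boldsymbol{f}\|_{k+1}$.
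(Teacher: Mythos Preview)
Your proposal is correct and follows exactly the approach of the paper, which simply states that the polynomial structure of $f_0,f_{N+1},\boldsymbol{f}$ (already recorded at the end of Section~\ref{section:red}) allows one to apply Lemma~\ref{exist:lem1} directly. Your Taylor-expansion verification that the apparent negative powers of $\delta$ cancel, the monomial parity check, and the $\delta$-bookkeeping for $\zeta$ and $\delta^2\boldsymbol{\psi}''$ are precisely the details that the paper leaves implicit.
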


Now, we are ready to prove Theorem \ref{intro:theorem}. 
We will construct the solution $(\zeta,\psi_0,\boldsymbol{\psi})$ to \eqref{red:IK4} by the standard 
iteration arguments. 
To this end, we introduce a function space 
\begin{align*}
\boldsymbol{X}=\{(\zeta,\psi_0,\boldsymbol{\psi})\in C^\infty(\mathbf{R}) \,|\, &
 \zeta,\psi_0'\in B_e^\infty, \boldsymbol{\psi}\in B_0^\infty, \psi_0(0)=0, \\ &
 \opnorm{(\zeta,\psi_0,\boldsymbol{\psi})}_{k,\delta} \leq M_k \;\mbox{for}\;k=1,2,3,\ldots \},
\end{align*}
where 
\[
\opnorm{(\zeta,\psi_0,\boldsymbol{\psi})}_{k,\delta} = \|\psi_0'\|_{k+2}+\|\zeta\|_{k+2,\delta}+\|\boldsymbol{\psi}\|_{k+3,\delta}
\]
and the constants $M_k$ for $k=1,2,3,\ldots$ will be defined later. 
Suppose that $(\tilde{\zeta},\tilde{\psi}_0,\tilde{\boldsymbol{\psi}}) \in \boldsymbol{X}$. 
Then, by Lemma \ref{exist:lem2} and Proposition \ref{ET:prop1}, under the condition $0<\delta\leq\delta_0$ 
there exists a unique solution $(\zeta,\psi_0,\boldsymbol{\psi})$ to 
\begin{equation}\label{ET:appeq}
\begin{cases}
 (\eta_{(0)}-2\gamma)\zeta + 2(\gamma-\eta_{(0)})\psi_0'
  + (\boldsymbol{1}-\boldsymbol{a}_0)\cdot\boldsymbol{\psi}' = g_0 + \delta^2 \tilde{f}_0, \\
(\boldsymbol{1}-\boldsymbol{a}_0)\psi_0''
 - \delta^2(A_0-\boldsymbol{1}\otimes\boldsymbol{a}_0)\boldsymbol{\psi}''
 + A_1\boldsymbol{\psi} = \boldsymbol{g} + \delta^2\tilde{\boldsymbol{f}}, \\
 \zeta + \psi_0' = g_{N+1} + \delta^2 \tilde{f}_{N+1}
\end{cases}
\end{equation}
satisfying $\zeta,\psi_0'\in B_e^\infty$, $\boldsymbol{\psi}\in B_0^\infty$, and $\psi_0(0)=0$, 
where $\tilde{f}_0,\tilde{\boldsymbol{f}},\tilde{f}_{N+1}$ are given by $f_0,\boldsymbol{f},f_{N+1}$ with 
$(\zeta, \psi_0, \boldsymbol{\psi})$ replaced by $(\tilde{\zeta},\tilde{\psi}_0,\tilde{\boldsymbol{\psi}})$. 
Moreover, the solution satisfies 
\[
\begin{cases}
 \opnorm{(\zeta,\psi_0,\boldsymbol{\psi})}_{2,\delta}
  \leq C + \delta C(M_2), \\
 \opnorm{(\zeta,\psi_0,\boldsymbol{\psi})}_{k,\delta}
  \leq C(k,M_{k-1}) + \delta C(M_2)M_k
\end{cases}
\]
for $k=3,4,5,\ldots$. 
In view of these estimates, we put $M_2 = 2C$ and define $M_k$ inductively by 
$M_k = 2C(k,M_{k-1})$ for $k=3,4,5,\ldots$. 
Then, by taking $\delta_0$ so small that $2\delta_0C(M_2)\leq C$ and $2\delta_0C(M_2) \leq 1$ 
we see that $(\zeta, \psi_0, \boldsymbol{\psi}) \in \boldsymbol{X}$. 
Therefore, if we define a map 
$\boldsymbol{\mathcal{T}}:(\tilde{\zeta},\tilde{\psi}_0,\tilde{\boldsymbol{\psi}})
 \mapsto (\zeta, \psi_0, \boldsymbol{\psi})$, 
then $\boldsymbol{\mathcal{T}}$ maps $\boldsymbol{X}$ into itself. 
Moreover, in exactly the same way as above, we obtain 
\[
\begin{cases}
 \opnorm{\boldsymbol{\mathcal{T}}(\tilde{\zeta},\tilde{\psi}_0,\tilde{\boldsymbol{\psi}})
  - \boldsymbol{\mathcal{T}}(\zeta,\psi_0,\boldsymbol{\psi})}_{2,\delta}
 \leq \frac12 \opnorm{(\tilde{\zeta},\tilde{\psi}_0,\tilde{\boldsymbol{\psi}})
   - (\zeta,\psi_0,\boldsymbol{\psi})}_{2,\delta}, \\
 \opnorm{\boldsymbol{\mathcal{T}}(\tilde{\zeta},\tilde{\psi}_0,\tilde{\boldsymbol{\psi}})
  - \boldsymbol{\mathcal{T}}(\zeta,\psi_0,\boldsymbol{\psi})}_{k,\delta} \\
 \quad\leq \frac12 \opnorm{(\tilde{\zeta},\tilde{\psi}_0,\tilde{\boldsymbol{\psi}})
   - (\zeta,\psi_0,\boldsymbol{\psi})}_{k,\delta} 
  + C(k,M_k) \opnorm{(\tilde{\zeta},\tilde{\psi}_0,\tilde{\boldsymbol{\psi}})
   - (\zeta,\psi_0,\boldsymbol{\psi})}_{k-1,\delta}
\end{cases}
\]
for $k=3,4,5,\ldots$. 
Therefore, by the contraction mapping principle, the map $\boldsymbol{\mathcal{T}}$ has a unique 
fixed point $(\zeta,\psi_0,\boldsymbol{\psi}) \in \boldsymbol{X}$, 
which is a solution to \eqref{red:IK4} and satisfies 
\[
\|\psi_0'\|_{k+2} + \|\zeta\|_{k+2,\delta} + \|\boldsymbol{\psi}\|_{k+3,\delta} \leq M_k
 \qquad\mbox{for}\quad k=2,3,4,\ldots
\]
with a constant $M_k$ independent of $\delta\in(0,\delta_0]$. 
The proof of Theorem \ref{intro:theorem} is complete.

\section{Numerical analysis for large amplitude solitary waves}
\label{section:na}
\setcounter{figure}{0}

In the previous section, we proved the existence of small amplitude solitary wave solutions to the 
Isobe--Kakinuma model \eqref{intro:IK}. 
In the present section, we will analyze numerically large amplitude solitary wave solutions to the model 
in the special case where the parameters are chosen as $N=1$ and $p_1=2$. 
Even in this special case, the Isobe--Kakinuma model gives a better approximation than the Green--Naghdi 
equations in the shallow water and strongly nonlinear regime. 
Therefore, we will consider the equations 
\begin{equation}\label{na:eq1}
\begin{cases}
 \displaystyle
  c\eta + H\phi_0' + \frac13 H^3 \phi_1' = 0, \\
 \displaystyle
  \phi_0'' + \frac15 H^2 \phi_1'' + 2\delta^{-2}\phi_1 = 0, \\
 \displaystyle
  c(\phi_0' + H^2 \phi_1') + \eta + \frac{1}{2} (\phi_0' + H^2 \phi_1')^2
   + 2\delta^{-2} H^2 \phi_1^2 = 0
\end{cases}
\end{equation}
under the boundary conditions at the spatial infinity 
\begin{equation}\label{na:BCI}
\eta(x), \phi_0'(x), \phi_1'(x), \phi_1(x) \to 0 \quad\mbox{as}\quad x\to\pm\infty
\end{equation}
and the symmetry 
\begin{equation}\label{na:sym}
\eta(-x)=\eta(x), \quad \phi_0(-x)=-\phi_0(x), \quad \phi_1(-x)=-\phi_1(x),
\end{equation}
where $H=1+\eta$. 
In this case, the constant $\gamma$ in Theorem \ref{intro:theorem} is given by $\gamma=\frac13$ 
so that we can put 
\begin{equation}\label{na:c}
c = 1+\frac23\delta^2
\end{equation}
and regard $\delta$ as a bifurcation parameter. 
Moreover, it follows from Proposition \ref{cl:prop2} that solutions to \eqref{na:eq1}--\eqref{na:BCI} satisfy 
\begin{equation}\label{na:cl}
\eta^2 - H(\phi_0')^2 - \frac23H^3\phi_0'\phi_1' - \frac15H^5(\phi_1')^2
 + \frac43\delta^{-2}H^3\phi_1^2 = 0.
\end{equation}

For numerical analysis, it is convenient to rewrite the equations in \eqref{na:eq1} as a system of 
ordinary differential equations of order 1. 
To this end, we introduce a new unknown function 
\begin{equation}\label{na:defu}
u = \phi_0' + H^2\phi_1',
\end{equation}
which is the horizontal component of the velocity on the water surface. 
Observe that the first equation in \eqref{na:eq1} and \eqref{na:defu} can be rewritten into a system 
\[
\begin{pmatrix}
 H & \frac13H^3 \\
 1 & H^2
\end{pmatrix}
\begin{pmatrix}
 \phi_0' \\
 \phi_1'
\end{pmatrix}
=
\begin{pmatrix}
 -c\eta \\
 u
\end{pmatrix},
\]
or equivalently
\begin{equation}\label{na:phi'}
\begin{pmatrix}
 \phi_0' \\
 \phi_1'
\end{pmatrix}
= \frac{1}{\frac23H^3}
\begin{pmatrix}
 -H^2(c\eta+\frac13Hu) \\
 c\eta+Hu
\end{pmatrix}.
\end{equation}
Differentiating the first equation in \eqref{na:eq1} and \eqref{na:defu}, we obtain 
\[
\begin{pmatrix}
 H & \frac13H^3 \\
 1 & H^2
\end{pmatrix}
\begin{pmatrix}
 \phi_0'' \\
 \phi_1''
\end{pmatrix}
=
\begin{pmatrix}
 -(c+u)\eta' \\
 u'-2H\phi_1'\eta'
\end{pmatrix},
\]
or equivalently
\[
\begin{pmatrix}
 \phi_0'' \\
 \phi_1''
\end{pmatrix}
= \frac{1}{\frac23H^3}
\begin{pmatrix}
 \bigl(-(c+u)H^2 + \frac23H^4\phi_1'\bigr)\eta' - \frac13H^3u' \\
 \bigl((c+u)-2H^2\phi_1'\bigr)\eta' + Hu'
\end{pmatrix}.
\]
Plugging these into the second equation in \eqref{na:eq1} yields 
\begin{equation}\label{na:eq2}
\left(-6(c+u)H + 2H^3\phi_1'\right)\eta' - H^2u' + 10\delta^{-2}H^2\phi_1 = 0.
\end{equation}
On the other hand, we can rewrite the third equation in \eqref{na:eq1} as
\[
cu + \eta + \frac12u^2 + 2\delta^{-2}H^2\phi_1^2 = 0.
\]
Differentiating this yields 
\begin{equation}\label{na:eq3}
(c+u)u' + (1+4\delta^{-2}H\phi_1^2)\eta' + 4\delta^{-2}H^2\phi_1 \phi_1' = 0.
\end{equation}
Now, it follows from \eqref{na:phi'}--\eqref{na:eq3} that 
\begin{equation}\label{na:eq4}
\begin{cases}
 \displaystyle
  \eta' = \frac{6H(c\eta+Hu)+10H^2(c+u)}{
   \delta^2\{ 6H(c+u)^2-3(c+u)(c\eta+Hu)-H^2(1+4H\phi_1^2) \}}\phi_1, \\[2ex]
 \displaystyle
  u' = -\frac{18(c\eta+Hu)\big(2H(c+u)-(c\eta+Hu)\big)+10H^3(1+4H\phi_1^2)}{
   \delta^2H\{ 6H(c+u)^2-3(c+u)(c\eta+Hu)-H^2(1+4H\phi_1^2) \}}\phi_1, \\[2ex]
 \displaystyle
 \phi_1' = \frac{3}{2H^3}(c\eta+Hu).
\end{cases}
\end{equation}
To summarize, \eqref{na:eq1}--\eqref{na:sym} has been transformed equivalently into \eqref{na:eq4} 
under the boundary conditions at the spatial infinity 
\begin{equation}\label{na:BCI2}
\eta(x), u(x), \phi_1(x) \to 0 \quad\mbox{as}\quad x\to\pm\infty
\end{equation}
and the symmetry 
\begin{equation}\label{na:sym2}
\eta(-x)=\eta(x), \quad u(-x)=u(x), \quad \phi_1(-x)=-\phi_1(x).
\end{equation}

\begin{proposition}\label{na:prop1}
Any regular solution $(\eta,u,\phi_1)$ to \eqref{na:eq4}--\eqref{na:BCI2} satisfies the two identities 
\begin{equation}\label{na:id}
\begin{cases}
 \displaystyle
  cu + \eta + \frac{1}{2}u^2 + 2\delta^{-2}H^2\phi_1^2 = 0, \\
 \displaystyle
  \eta^2 - Hu^2 + 2u(c\eta+Hu) - \frac{6}{5H}(c\eta+Hu)^2 + \frac43\delta^{-2}H^3\phi_1^2 = 0.
\end{cases}
\end{equation}
\end{proposition}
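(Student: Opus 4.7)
The two identities in \eqref{na:id} are first integrals of the ODE system \eqref{na:eq4}. My plan is to show that the derivative of each left-hand side vanishes along any solution of \eqref{na:eq4}, and then invoke the boundary conditions \eqref{na:BCI2} (which force each expression to vanish at $\pm\infty$) to conclude that each identity holds everywhere.

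For the first identity, let $F_1 = cu + \eta + \tfrac{1}{2}u^2 + 2\delta^{-2}H^2\phi_1^2$. A direct differentiation, using $H' = \eta'$, gives
\[
F_1' = (c+u)u' + (1+4\delta^{-2}H\phi_1^2)\eta' + 4\delta^{-2}H^2\phi_1\phi_1',
\]
which is exactly the left-hand side of \eqref{na:eq3}. Now the system \eqref{na:eq4} was obtained precisely by solving the linear system in $(\eta',u',\phi_1')$ consisting of the $\phi_1'$-equation coming from \eqref{na:phi'}, together with \eqref{na:eq2} and \eqref{na:eq3}. Hence every solution of \eqref{na:eq4} automatically satisfies \eqref{na:eq3}, so $F_1' \equiv 0$. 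Since $\eta, u, \phi_1 \to 0$ and $H \to 1$ at $\pm\infty$ by \eqref{na:BCI2}, we have $F_1 \to 0$ at infinity, and so $F_1 \equiv 0$.

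For the second identity, my plan is to invoke Proposition \ref{cl:prop2} via the equivalence, already established just above Proposition \ref{na:prop1}, between the systems \eqref{na:eq1}--\eqref{na:sym} and \eqref{na:eq4}--\eqref{na:sym2}. Thus a solution of \eqref{na:eq4}--\eqref{na:BCI2} yields a solution $(\eta,\phi_0,\phi_1)$ of \eqref{na:eq1}--\eqref{na:BCI}, so by Proposition \ref{cl:prop2} the conservation law \eqref{cl:cl2} holds. Specializing \eqref{cl:cl2} to $N=1$, $p_1=2$ and multiplying by $2$ produces
\[
\eta^2 - H(\phi_0')^2 - \tfrac{2}{3}H^3\phi_0'\phi_1' - \tfrac{1}{5}H^5(\phi_1')^2 + \tfrac{4}{3}\delta^{-2}H^3\phi_1^2 = 0.
\]
Substituting $\phi_0' = u - H^2\phi_1'$ (definition of $u$) and $H^3\phi_1' = \tfrac{3}{2}(c\eta+Hu)$ (from \eqref{na:phi'}) into the combination $H(\phi_0')^2 + \tfrac{2}{3}H^3\phi_0'\phi_1' + \tfrac{1}{5}H^5(\phi_1')^2$ reduces it to $Hu^2 - 2u(c\eta+Hu) + \tfrac{6}{5H}(c\eta+Hu)^2$, which upon rearranging yields the second identity.

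The main work is the algebraic verification in the last step. It is purely computational: after the substitutions, the $(\phi_1')^2$ terms combine with coefficient $(1 - \tfrac{2}{3} + \tfrac{1}{5}) = \tfrac{8}{15}$ on $H^5(\phi_1')^2$, and the cross term collapses to $-\tfrac{4}{3}H^3 u\phi_1'$; the substitution $H^3\phi_1' = \tfrac{3}{2}(c\eta+Hu)$ then removes the $\phi_1'$ from the expression and produces the $1/H$ factor in the third term of identity~2. I would also briefly note that one could avoid invoking Proposition \ref{cl:prop2} and instead differentiate the second identity's left-hand side directly, substituting $\eta', u', \phi_1'$ from \eqref{na:eq4}; this works but is noticeably more tedious because of the rational dependence on $H$.
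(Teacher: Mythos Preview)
Your proof is correct and follows essentially the same route as the paper. The paper's proof is just two sentences: the first identity is precisely the third equation in \eqref{na:eq1} (written in terms of $u=\phi_0'+H^2\phi_1'$), and the second identity comes from plugging \eqref{na:phi'} into \eqref{na:cl}; your argument for the second identity is exactly this, with the algebra spelled out, and for the first identity your differentiation-plus-boundary-condition argument is a slightly more explicit variant of invoking the equivalence (since \eqref{na:eq3} is the derivative of the third equation in \eqref{na:eq1}).
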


\begin{proof}[{\bf Proof.}]
The first identity is nothing but the third equation in \eqref{na:eq1}. 
Plugging \eqref{na:phi'} into \eqref{na:cl}, we obtain the second one. 
\end{proof}

In order to obtain numerical solutions to \eqref{na:eq4}--\eqref{na:sym2}, it is sufficient to determine 
the initial data $(\eta(0),u(0),\phi_1(0))$. 
It follows from \eqref{na:sym2} that $\phi_1(0)=0$, which together with the identities in \eqref{na:id} 
implies 
\begin{equation}\label{na:id2}
\begin{cases}
 \displaystyle
  cu(0) + \eta(0) + \frac{1}{2}u(0)^2 = 0, \\
 \displaystyle
  \eta(0)^2 - H(0)u(0)^2 + 2u(0)\bigl( c\eta(0) + H(0)u(0) \bigr)
   - \frac{6}{5H(0)}\bigl( c\eta(0) + H(0)u(0) \bigr)^2 = 0,
\end{cases}
\end{equation}
where $H(0)=1+\eta(0)$ and $c$ is given by \eqref{na:c}. 
Particularly, by eliminating $\eta(0)$ from these two identities we obtain 
\begin{equation}\label{na:id3}
7u(0)^4+42cu(0)^3+6(13c^2-3)u(0)^2+8c(13c^2-8)u(0)+8(6c^2-1)(c^2-1) = 0.
\end{equation}
This is a quartic equation in $u(0)$ so that for each given $c$ we have four roots. 
Generally, two of them are complex numbers and one real root does not give the correct initial data 
for the solitary wave solution, so that we can determine the initial data $(\eta(0),u(0),\phi_1(0))$ 
for appropriately chosen $\delta$.

We proceed to compare solitary wave solutions to the Isobe--Kakinuma model \eqref{na:eq1}--\eqref{na:c} 
calculated numerically as above with the classical solitons of the Korteweg--de Vries equation 
\begin{equation}\label{na:kdv}
\eta_{\rm KdV}(x) = \frac43\delta^2\mbox{\rm sech}^2x,
\end{equation}
which is the first approximation for small amplitude solitary wave solutions to the Isobe--Kakinuma model 
as was guaranteed by Theorem \ref{intro:theorem}. 
In Figure \ref{na:IKvsKdV}, we plot the surface profile $\eta(x)$ of the solitary wave solutions 
to the Isobe--Kakinuma model \eqref{na:eq1} and the soliton $\eta_{\rm KdV}(x)$ of the 
Korteweg--de Vries equation given by \eqref{na:kdv} for several values of $\delta$. 
We observe that for small $\delta$ the error $\|\eta-\eta_{\rm KdV}\|_{L^\infty}$ is small as expected. 
As $\delta$ increases, the error cannot be negligible anymore and the wave height of the solitary wave 
to the Isobe--Kakinuma model becomes larger than that of the classical soliton. 
We can catch numerically the solitary wave solutions to the Isobe--Kakinuma model for $\delta$ up to 
some critical value $\delta_c$. 
For $\delta$ beyond this critical value $\delta_c$, the quartic equation \eqref{na:id3} does not have 
any real root so that the solitary wave solution might not exist. 
This suggests that there exists a maximum height of the solitary wave solutions to the Isobe--Kakinuma model 
as in the case of the full water wave problem.

\begin{figure}[h]
\setlength{\unitlength}{1pt}
\begin{picture}(0,0)
\put(11,-20){(a)}
\put(160,-20){(b)}
\put(308,-20){(c)}
\put(11,-127){(d)}
\put(160,-127){(e)}
\put(308,-127){(f)}
\end{picture}
\begin{center}
\begin{tabular}{ccc}
\includegraphics[width=0.3\linewidth]{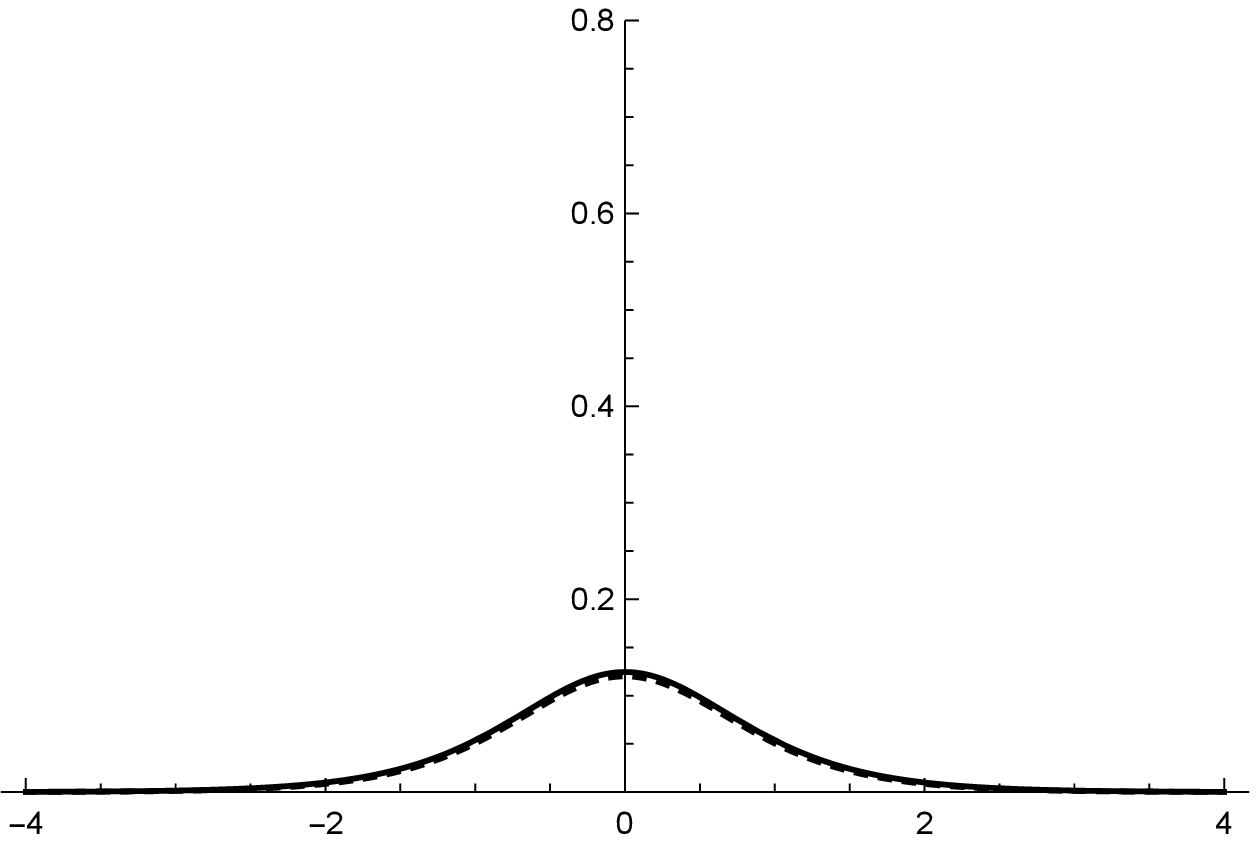} &
\includegraphics[width=0.3\linewidth]{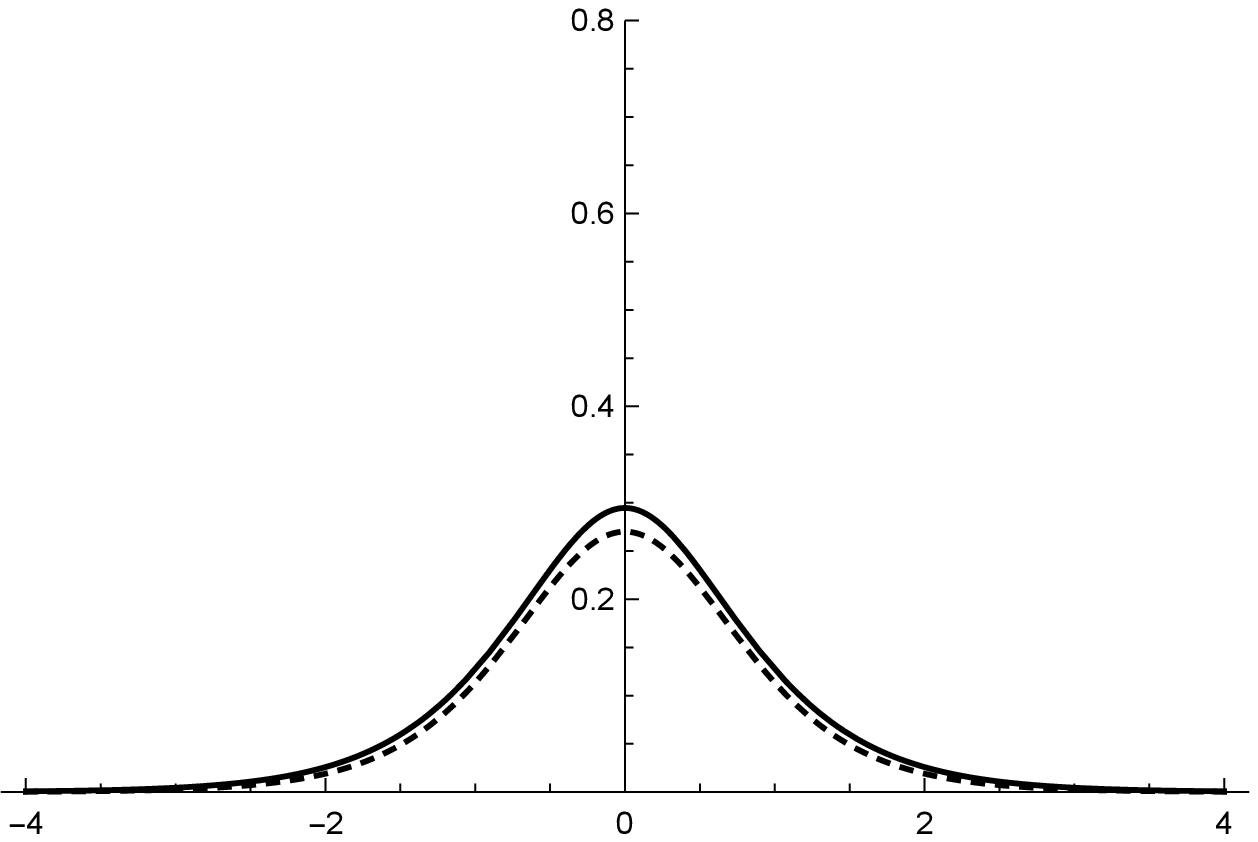} &
\includegraphics[width=0.3\linewidth]{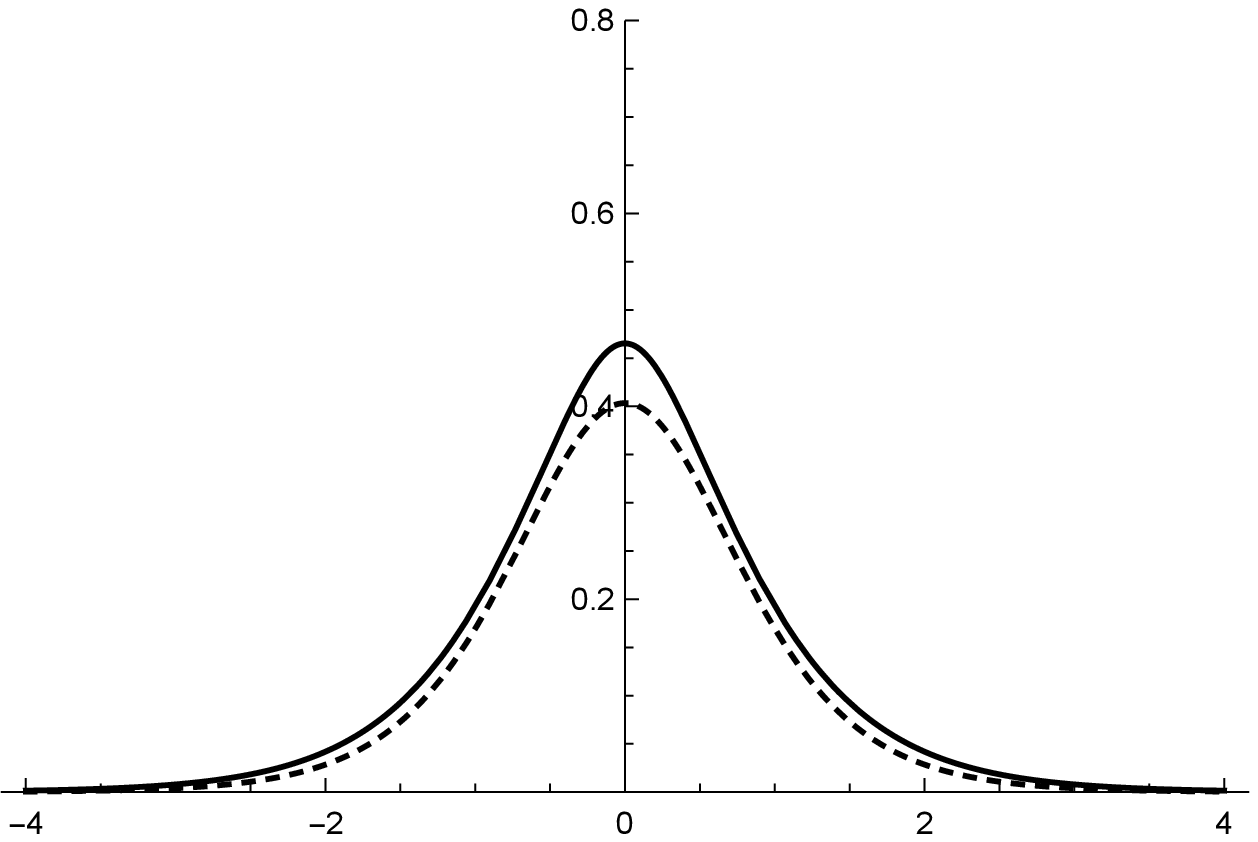} 
\end{tabular}

\bigskip
\begin{tabular}{ccc}
\includegraphics[width=0.3\linewidth]{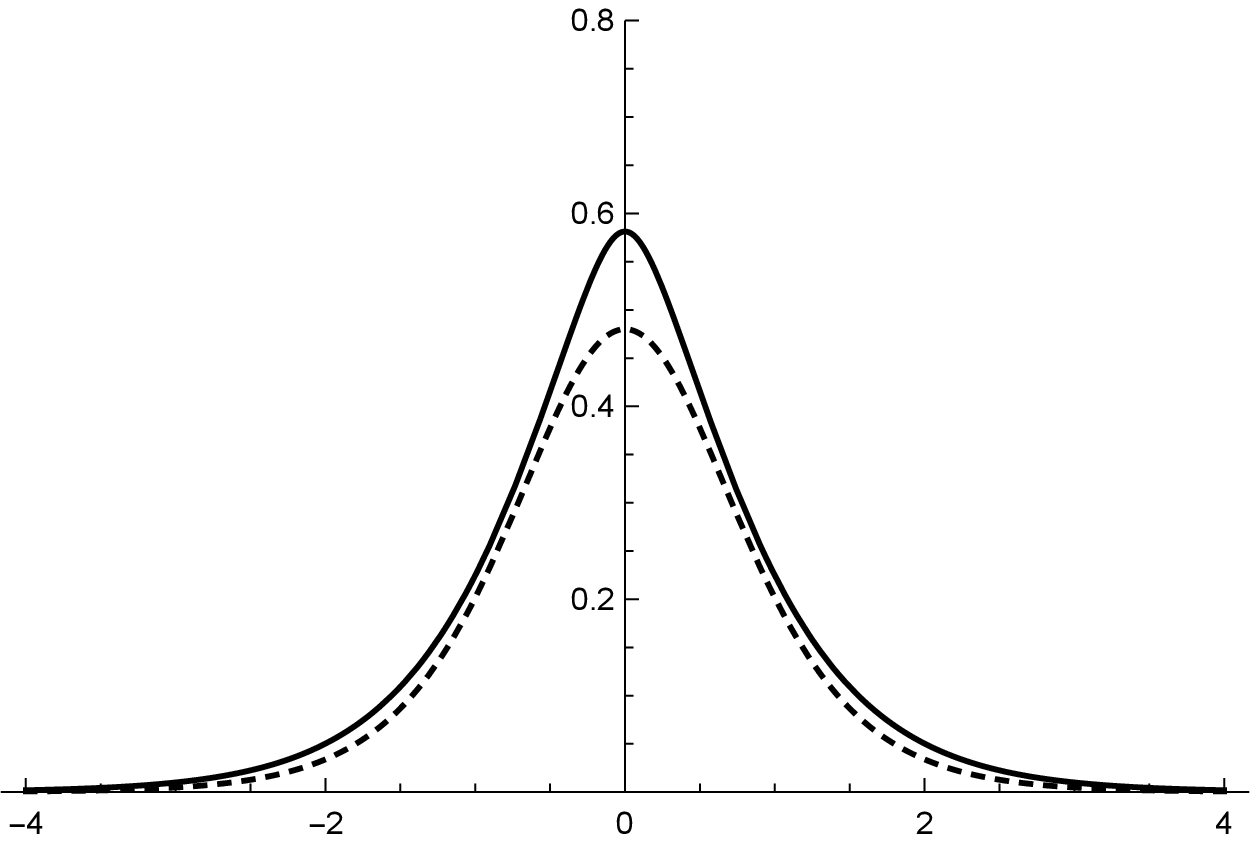} &
\includegraphics[width=0.3\linewidth]{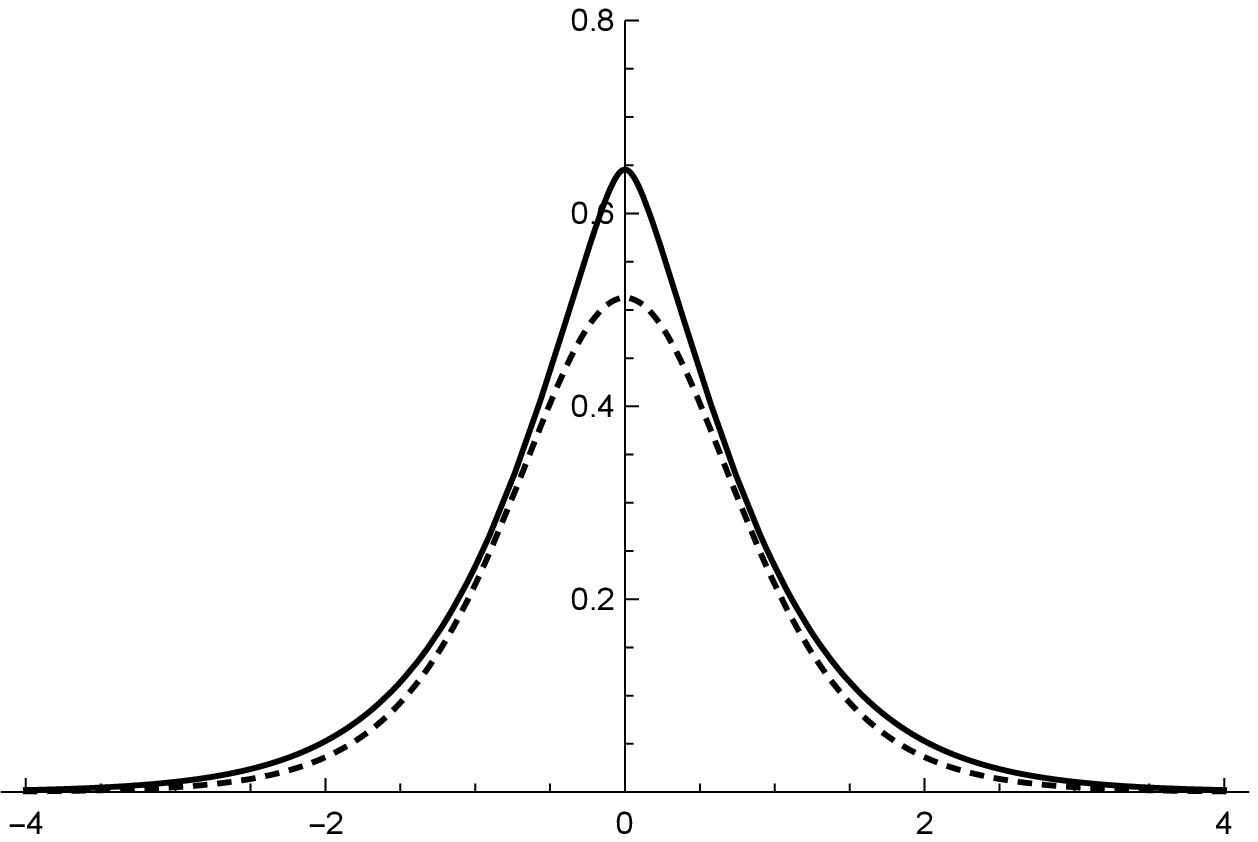} &
\includegraphics[width=0.3\linewidth]{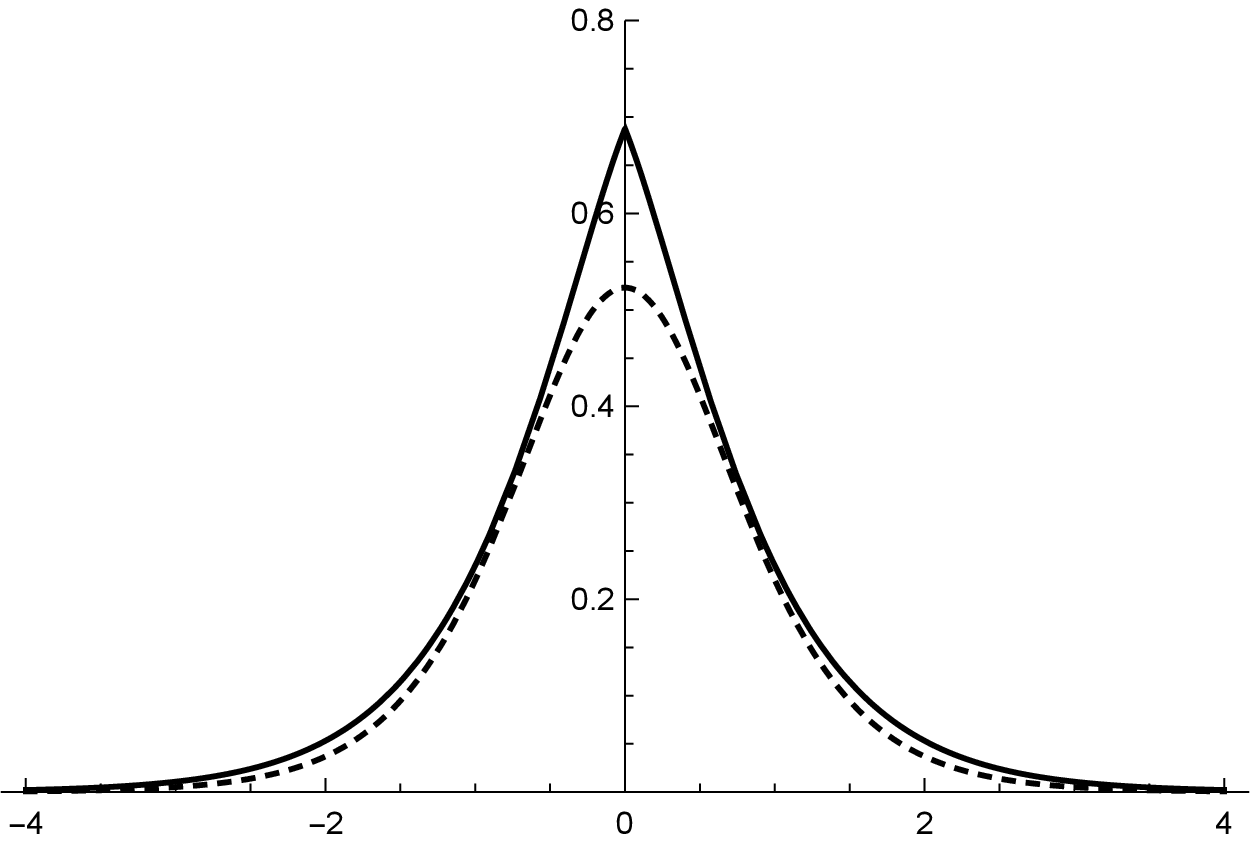} 
\end{tabular}
\end{center}
\caption{Surface profiles of the solitary wave solutions to the Isobe--Kakinuma model (solid line) and 
the Korteweg--de Vries equation (dashed line) for several values of $\delta$: (a) $\delta=0.3$, 
(b) $\delta=0.45$, (c) $\delta=0.55$, (d) $\delta=0.6$, (e) $\delta=0.62$, (f) $\delta=0.62633493$. }
\label{na:IKvsKdV}
\end{figure}

In Figures \ref{na:crests1} and \ref{na:crests2}, we plot also the surface profiles of the solitary wave 
solutions to the Isobe--Kakinuma model for several values of $\delta$ in the same figures. 
We observe that the wave height is monotonically increasing as $\delta$ increases and that a sharp crest 
is formed as $\delta$ approaches the critical value $\delta_c$. 
We will analyze more precisely this formation of a sharp crest. 
To this end, we calculate numerically the curvature $\kappa$ at the crest of the surface profile, 
which is defined by 

\begin{figure}[t]
 \begin{minipage}{0.48\hsize}
  \begin{center}
   \includegraphics[width=70mm]{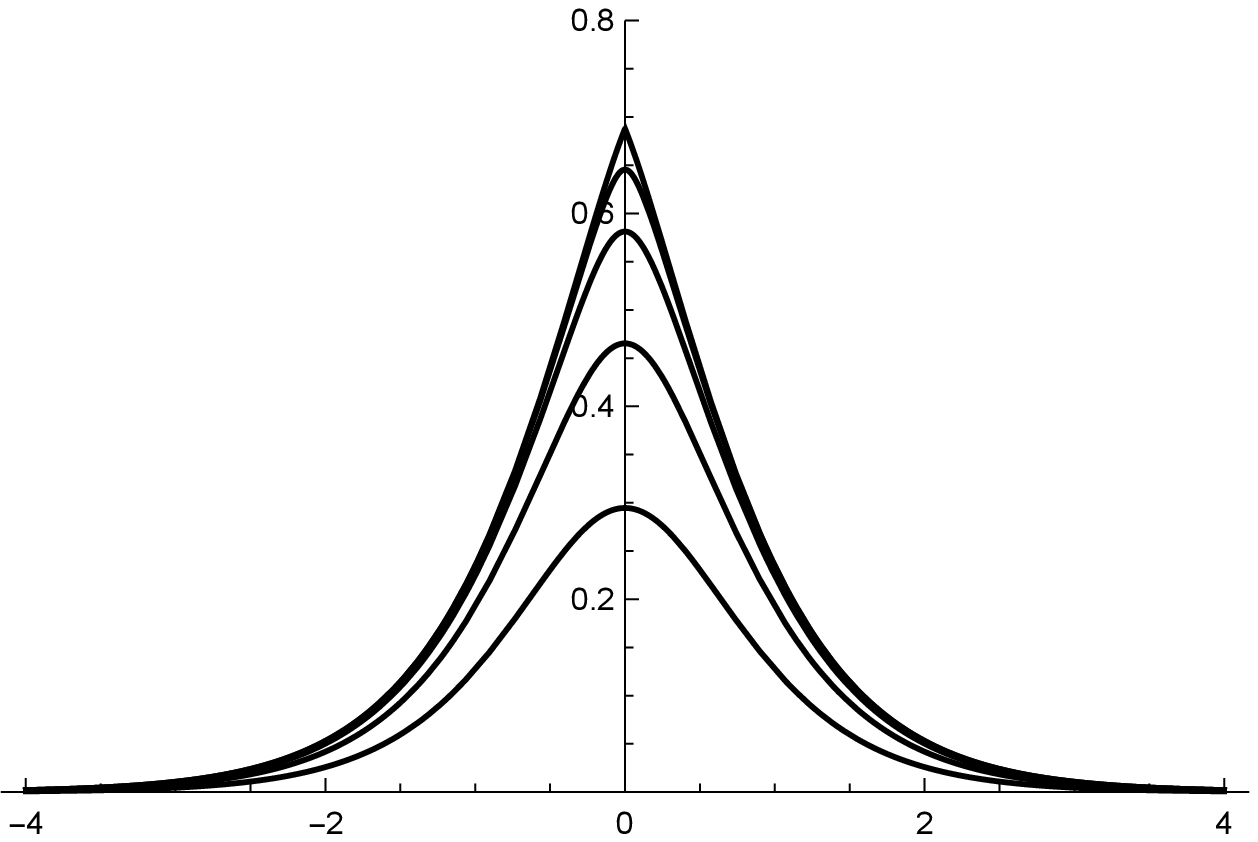}
  \end{center}
  \caption{Surface profiles of the solitary wave solutions to the Isobe--Kakinuma model for 
  $\delta=0.45$, 0.55, 0.6, 0.62, 0.62633493.}
  \label{na:crests1}
 \end{minipage}\quad
 \begin{minipage}{0.49\hsize}
  \begin{center}
   \includegraphics[width=70mm]{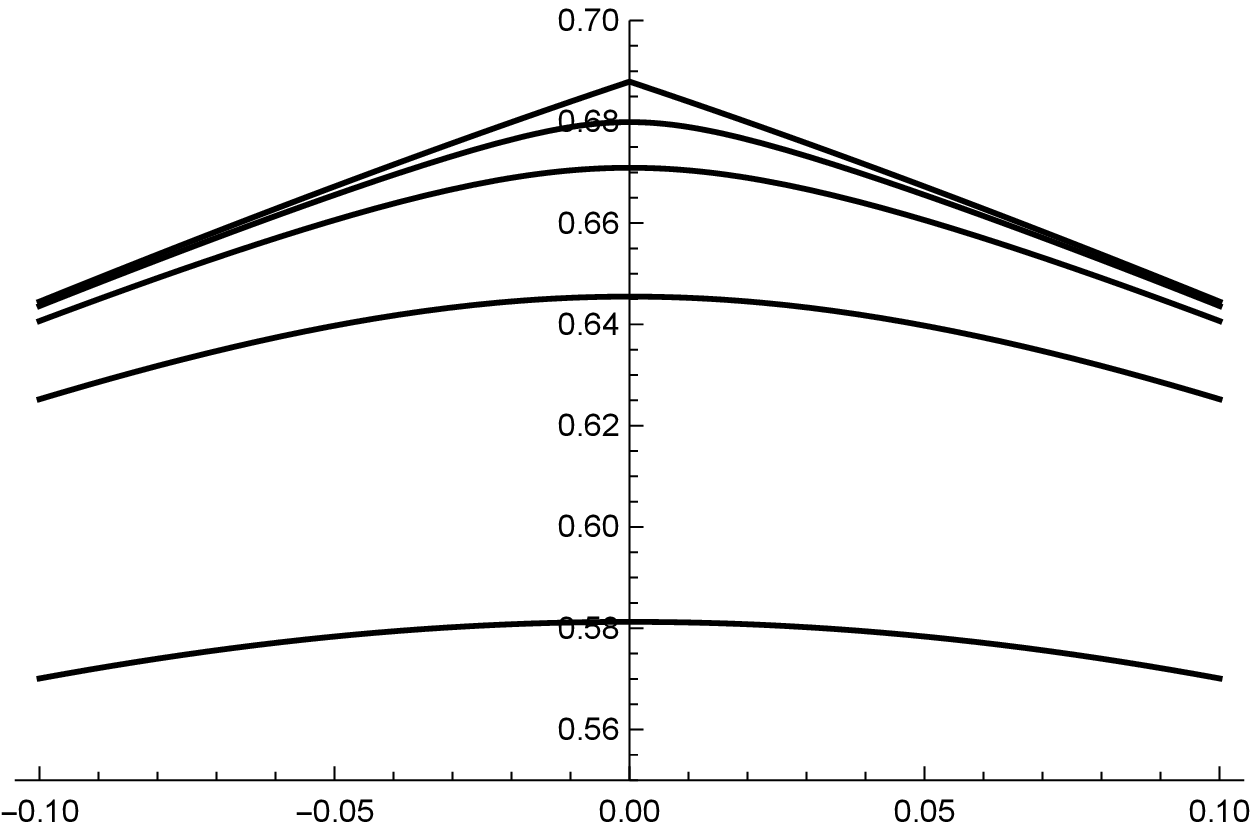}
  \end{center}
  \caption{Surface profiles near the crests of the solitary wave solutions to the Isobe--Kakinuma model for 
  $\delta=0.6$, 0.62, 0.625, 0.626, 0.62633493.}
  \label{na:crests2}
 \end{minipage}
\end{figure}

\[
\kappa(x) = \frac{\eta''(x)}{\bigl( 1+(\eta'(x))^2 \bigr)^\frac32}.
\]
We also introduce a function $d(x)$ by 
\begin{equation}\label{na:deno}
d = 6H(c+u)^2-3(c+u)(c\eta+Hu)-H^2(1+4H\phi_1^2),
\end{equation}
where $H=1+\eta$ and $c$ is given by \eqref{na:c}. 
This function appears in the denominator of the right-hand side of the ordinary differential equations 
for $\eta$ and $u$ in \eqref{na:eq4}. 
In Table \ref{na:list}, we list the wave height, the curvature at the crest, and the value of the 
denominator $d$ at the crest of the solitary wave solutions to the Isobe--Kakinuma model 
for several values of $\delta$. 
We observe that as $\delta$ approaches some critical value $\delta_c$, the wave height converges toward a maximum 
wave height $\eta_c(0)$, the curvature at the crest is blowing up, and the denominator at the crest is 
going to vanish. 
This consideration suggests strongly the existence of solitary wave of extreme form as well as a sharp crest 
to the Isobe--Kakinuma model and that the critical value $\delta_c$ would be obtained by the equation 
$d(0)=0$, that is, 
\begin{equation}\label{na:id4}
6H(0)\bigl( c+u(0) \bigr)^2 - 3\bigl( c+u(0) \bigr)\bigl( c\eta(0)+H(0)u(0) \bigr) - H(0)^2 = 0,
\end{equation}
where $H(0)=1+\eta(0)$ and $c$ is given by \eqref{na:c}. 
In fact, we can calculate this critical value $\delta_c$, the maximum wave height $\eta_c(0)$, 
the horizontal velocity $u_c(0)$ of the water at the crest, and the critical phase speed $c_c$ by 
solving nonlinear algebraic equations \eqref{na:id3} and \eqref{na:id4} together with \eqref{na:c}. 
Those values are approximately given by 
\begin{equation}\label{na:cv}
\delta_c = 0.62633493, \quad \eta_c(0) = 0.687926, \quad u_c(0) = -0.797196, \quad c_c = 1.26153.
\end{equation}
In Figures \ref{na:extreme1} and \ref{na:extreme2}, we plot the surface and horizontal velocity profiles 
of the solitary wave of extreme form to the Isobe--Kakinuma model. 
It follows from \eqref{na:cv} that 
$c_c + u_c(0)>0$, which means that the crest of the solitary wave of extreme form is not the stagnation point 
unlike the full water wave problem. 

\begin{table}[t]
\begin{center}
 \begin{tabular}{lllllll}
  \hline
  $\delta$   & & $\eta(0)$ & & $-\kappa(0)$                   & & $d(0)$ \\ \hline\hline
  0.6        & & 0.581258  & & 2.34087                        & & 1.55722 \\
  0.62       & & 0.645485  & & 4.85676                        & & 7.30167$\times 10^{-1}$ \\
  0.625      & & 0.670918  & & 1.04536$\times 10$             & & 3.23799$\times 10^{-1}$ \\
  0.626      & & 0.679938  & & 2.0651\phantom{0}$\times 10$   & & 1.59473$\times 10^{-1}$ \\
  0.6263     & & 0.685463  & & 6.3354\phantom{0}$\times 10$   & & 5.08746$\times 10^{-2}$ \\
  0.62633    & & 0.687014  & & 1.68098$\times 10^2$           & & 1.90423$\times 10^{-2}$ \\
  0.626334   & & 0.687532  & & 3.8648\phantom{0}$\times 10^2$ & & 8.26255$\times 10^{-3}$ \\
  0.6263349  & & 0.687855  & & 2.12563$\times 10^3$           & & 1.5\phantom{0000}$\times 10^{-3}$ \\
  0.62633493 & & 0.687915  & & 1.384\phantom{00}$\times 10^4$ & & 2.30314$\times 10^{-4}$ \\ \hline
 \end{tabular}
\caption{List of the wave height, the curvature at the crest, and the value of the denominator $d$ at 
the crest of the solitary wave solutions to the Isobe--Kakinuma model.}
\label{na:list}
\end{center}
\end{table}

\begin{figure}[h]
 \begin{minipage}{0.48\hsize}
  \begin{center}
   \includegraphics[width=70mm]{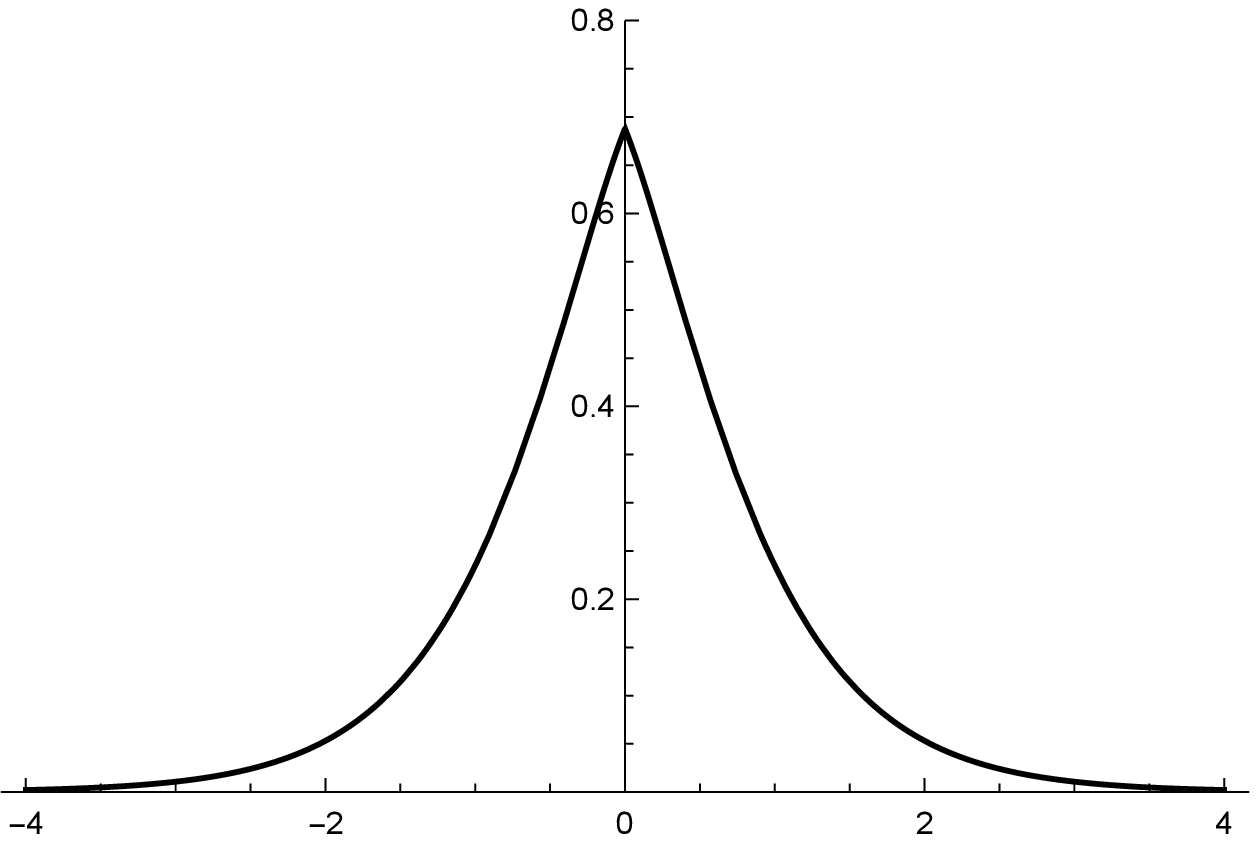}
  \end{center}
  \caption{Surface profile of the solitary wave of extreme form to the Isobe--Kakinuma model.}
  \label{na:extreme1}
 \end{minipage}\quad
 \begin{minipage}{0.49\hsize}
  \begin{center}
   \includegraphics[width=70mm]{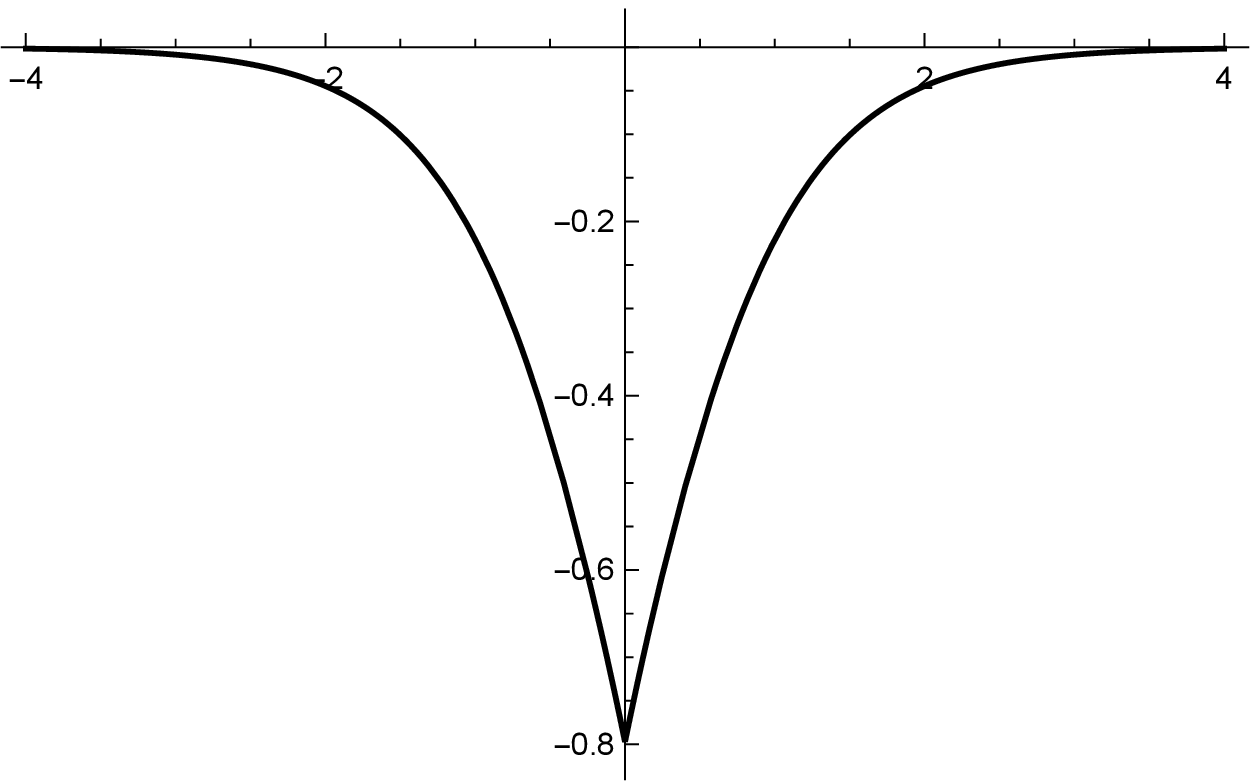}
  \end{center}
  \caption{Horizontal velocity profile of the solitary wave of extreme form to the Isobe--Kakinuma model.}
  \label{na:extreme2}
 \end{minipage}
\end{figure}

We proceed to calculate the angle of the crest of the solitary wave of extreme form. 
To this end, it is sufficient to evaluate $\eta_c'(\pm0)$, where $(\eta_c,u_c,\phi_{1c})$ is the 
solution to the Isobe--Kakinuma model \eqref{na:eq4} in the critical case $\delta=\delta_c$. 
We denote by $d_c$ the corresponding denominator defined by \eqref{na:deno} so that 
by the first equation in \eqref{na:eq4} we have 
\begin{equation}\label{na:ang1}
\eta_c' = \frac{ 2H_c(8H_cv_c-3c) }{\delta_c^2}\frac{\phi_{1c}}{d_c},
\end{equation}
where $H_c=1+\eta_c$ and 
\[
v_c=c_c+u_c.
\]
It follows from the third equation in \eqref{na:eq4} that 
\begin{equation}\label{na:ang2}
\phi_{1c}'(0) = \frac{3}{2H_c(0)^3}\bigl( H_c(0)v_c(0)-c_c \bigr).
\end{equation}
Differentiating \eqref{na:deno} and using $\phi_{1c}(0)=0$, we have 
\[
d_c'(\pm0) = \bigl( 3v_c(0)^2-2H_c(0) \bigr)\eta_c'(\pm0) + 3\bigl( 2H_c(0)v_c(0) + c_c \bigr)u_c'(\pm0).
\]
It follows from \eqref{na:eq3} in the critical case $\delta=\delta_c$ that 
\[
v_c(0)u_c'(\pm0) + \eta_c'(\pm0) = 0,
\]
so that 
\begin{equation}\label{na:ang3}
d_c'(\pm0) = \left( 3v_c(0)^2 - 8H_c(0) + \frac{3c_c}{v_c(0)} \right)\eta_c'(\pm0).
\end{equation}
By \eqref{na:ang2}, \eqref{na:ang3}, and l'H\^opital's rule, we obtain 
\begin{align*}
\lim_{x\to\pm0}\frac{\phi_{1c}(x)}{d_c(x)} 
&= \lim_{x\to\pm0}\frac{\phi_{1c}'(x)}{d_c'(x)} \\
&= \frac{3v_c(0)\bigl( H_c(0)v_c(0)-c_c \bigr)}{ 
 2H_c(0)^3\bigl( 3v_c(0)^3-8H_c(0)v_c(0)-3c_c \bigr)}\frac{1}{\eta_c'(\pm0)}.
\end{align*}
Therefore, passing to the limit $x\to\pm0$ in \eqref{na:ang1} yields 
\begin{equation}\label{na:ang4}
\eta_c'(\pm0) = \mp\sqrt{\frac{3v_c(0)\bigl( H_c(0)v_c(0)-c_c \bigr)\bigl( 8H_c(0)v_c(0)-3c_c\bigr)}{ 
 \delta_c^2H_c(0)^2\bigl( 3v_c(0)^3-8H_c(0)v_c(0)-3c_c \bigr)}},
\end{equation}
which gives the angle of the crest.

Here, note that we have rewritten all physical quantities in a nondimensional form. 
In order to calculate the angle of the crest in the physical space, 
we have to work with dimensional variables. 
Let $x^*$ and $\eta^*$ be the horizontal spatial coordinate and the surface elevation in the physical space, 
respectively, so that we have $x^*=\lambda x$ and $\eta^*=h\eta$, where $h$ is the mean depth of the water 
and $\lambda$ the typical wavelength. 
The angle of the crest in the physical space should be calculated from ${\eta_c^*}'(\pm0)$. 
In view of the relation $\eta^*(x^*)=h\eta(\lambda^{-1}x^*)$, we have 
${\eta^*}'(x^*)=\delta\eta'(x)$ so that ${\eta_c^*}'(\pm0)=\delta_c\eta_c'(\pm0)$, that is, 
\begin{align*}
{\eta_c^*}'(\pm0) 
&= \mp\sqrt{\frac{3v_c(0)\bigl( H_c(0)v_c(0)-c_c \bigr)\bigl( 8H_c(0)v_c(0)-3c_c \bigr)}{
 H_c(0)^2\bigl( 3v_c(0)^3-8H_c(0)v_c(0)-3c_c \bigr)}} \\
&= 0.24397.
\end{align*}
Now, the included angle $\theta$ of the shape crest in the physical space is approximately given by 
$\theta=\ang{152.6}$, which is larger than the included angle $\ang{120}$ of the sharp crest of 
the solitary wave solution of extreme form to the full water wave problem.


\bigskip
Mathieu Colin \par
{\sc University of Bordeaux, CNRS, Bordeaux INP, } \par
{\sc  IMB, UMR 5251, INRIA CARDAMOM, F-33400, Talence, France} \par
E-mail: Mathieu.Colin@math.u-bordeaux.fr

\bigskip
Tatsuo Iguchi \par
{\sc Department of Mathematics} \par
{\sc Faculty of Science and Technology, Keio University} \par
{\sc 3-14-1 Hiyoshi, Kohoku-ku, Yokohama, 223-8522, Japan} \par
E-mail: iguchi@math.keio.ac.jp

\end{document}